\newtheorem{theorem}{Theorem}[section]
\newtheorem{lemma}[theorem]{Lemma}
\newtheorem{corollary}[theorem]{Corollary}
\theoremstyle{remark}
\newtheorem*{remark}{Remark}
\newtheorem{example}{Example}[section]
\newtheorem*{definition}{Definition}
\theoremstyle{claim}
\newtheorem{claim}{Claim}
\theoremstyle{claim'}
\newtheorem{claim'}{Claim}
\def\@eqnnum{(\theequation)}
\numberwithin{equation}{section}
\begin{document}

\title[Recurrence relations for traces of singular moduli]
{Recurrence relations satisfied by the traces of singular moduli for $\Gamma_0(N)$}

\author{Bumkyu Cho}
\address{Department of Mathematics, Dongguk University--Seoul, 30 Pildong--ro 1--gil, Jung--gu, Seoul, 04620, South Korea}

\email{bam@dongguk.edu}

\subjclass[2010]{Primary 11F03}

\thanks{The author was supported by NRF--2018R1A2B6001645 and the Dongguk University Research Fund of 2019.}

\keywords{traces of singular moduli; $\Gamma$-equivalence; $\Gamma$-reduced forms}

\dedicatory{}

\begin{abstract}
We compute the divisor of the modular equation on the modular curve $\Gamma_0(N) \backslash \mathbb H^*$ and then find recurrence relations satisfied by the modular traces of the Hauptmodul for any congruence subgroup $\Gamma_0(N)$ of genus zero. We also introduce the notions and properties of $\Gamma$-equivalence and $\Gamma$-reduced forms about binary quadratic forms. Using these, we can explicitly compute the recurrence relations for $N = 2, 3, 4, 5$.
\end{abstract}

\maketitle

\section{Introduction}

Let $j(\tau)$ be the Hauptmodul for the full modular group $\mathrm{SL}_2(\mathbb Z)$ whose Fourier expansion is given as
\[ j(\tau) \ = \ \frac{1}{q} \ + \ 196884q \ + \ 21493760 q^2 \ + \ \cdots \qquad (q = e^{2 \pi i \tau}, \ \tau \in \mathbb H). \]
Let $\mathrm{Q}_D$ denote the set of positive definite quadratic forms of discriminant $D$ with the usual action of $\mathrm{SL}_2(\mathbb Z)$. For any $Q \in \mathrm{Q}_D$ we denote by $\tau_Q$ its unique root on $\mathbb H$, and put $\omega_Q = |\overline{\mathrm{SL}_2(\mathbb Z)}_Q|$. We further define the Hurwitz-Kronecker class number $H(D)$ and the trace $t(D)$ of singular moduli as
\[ H(D) \ = \ \sum_{[Q] \in \mathrm{Q}_D / \mathrm{SL}_2(\mathbb Z)} \frac{1}{\omega_Q}, \qquad t(D) \ = \ \sum_{[Q] \in \mathrm{Q}_D / \mathrm{SL}_2(\mathbb Z)} \frac{1}{\omega_Q} j(\tau_Q). \]
In (5) and (7) of \cite[Theorem 2]{Zagier}, Zagier obtained the recurrence relations for $H(D)$ and $t(D)$ by computing the divisor of the modular equation on the modular curve $\mathrm{SL}_2(\mathbb Z) \backslash \mathbb H^*$
\begin{eqnarray*}
\textrm{(1)} & \hspace{-.5cm} & \sum_{|r| < 2 \sqrt{n}} H(r^2 - 4n) \ = \ \sum_{d|n} \mathrm{max}\{d, \, n/d\} \ + \ \left\{ \begin{array}{ll}
1/6 & \mbox{if $n$ is a perfect square} \\
0 & \mbox{otherwise}
\end{array} \right. \\
\textrm{(2)} & \hspace{-.5cm} & \sum_{|r| < 2 \sqrt{n}} t(r^2 - 4n) \ = \ \left\{ \begin{array}{ll}
-4 & \mbox{if $n$ is a perfect square} \\
2 & \mbox{if $4n + 1$ is a perfect square} \\
0 & \mbox{otherwise.}
\end{array} \right.
\end{eqnarray*}

This result was generalized by Choi and Kim \cite{CK} to the case for weakly holomorphic modular functions and weak Maass forms of weight zero on $\Gamma_0^*(p)$. They could find recursions for traces by an analytic way, namely by finding recursions for the Fourier coefficients of weak Jacobi forms of weight $2$ and index $p$ on $\mathrm{SL}_2(\mathbb Z)$. Recently, Murakami \cite{Mura} obtained a similar result for the class number relations from the aspects of moduli spaces of elliptic curves with level structures, and showed that these intersection numbers can be written by the Fourier coefficients of the Siegel Eisenstein series of degree $2$ and weight $2$ with respect to $\mathrm{Sp}_2(\mathbb Z)$.

In the present article, we generalize Zagier's result to the case for the Hauptmodul $j_N(\tau)$ on the congruence subgroup $\Gamma_0(N)$. Our proof will be essentially geometric. We compute the divisor of the modular equation on the modular curve $X_0(N) := \Gamma_0(N) \backslash \mathbb H^*$ and express the modular equation as a product in terms of the values of $j_N(\tau)$ at all cusps in $X_0(N)$ other than $\infty$ and at all CM points in $X_0(N)$.

We denote by $\mathrm{Q}_{D, N}$ the set of positive definite quadratic forms $ax^2 + bxy + cy^2$ of discriminant $D$ such that $a \equiv 0 \pmod{N}$. Then $\Gamma_0(N)$ acts on $\mathrm{Q}_{D, N}$ in a natural way. Assume that $j_N(\tau)$ is of the form
\[ j_N(\tau) \ = \ \frac{1}{q} \ + \ c_1q \ + \ c_2 q^2 \ + \ \cdots \qquad (c_m \in \mathbb Z). \]
Observe that $j_N(\tau)$ has no constant term in its $q$-expansion. The Hurwitz-Kronecker class number $H(D, N)$ and the trace $t(D, N)$ of singular moduli are defined as
\begin{eqnarray*}
H(D, N) & = & \sum_{[Q] \in \mathrm{Q}_{D, N} / \Gamma_0(N)} \frac{1}{\omega_{Q, N}} \\
t(D, N) & = & \sum_{[Q] \in \mathrm{Q}_{D, N} / \Gamma_0(N)} \frac{1}{\omega_{Q, N}} j_N(\tau_Q),
\end{eqnarray*}
where $\omega_{Q, N} = |{\overline{\Gamma}_0(N)}_Q|$.

Zagier himself generalizes his result described above to several ways. For one of them, he deals with $\mathrm{Q}_{D, N, \beta}/\Gamma_0(N)$ to compute the traces of the Hauptmodul $j_N^*(\tau)$ for $\Gamma_0^*(N)$ (see \cite[Section 8]{Zagier}). However, we deal with $\mathrm{Q}_{D, N} / \Gamma_0(N)$ to compute the traces of the Hauptmodul $j_N(\tau)$ for $\Gamma_0(N)$. While the modular curve $X_0^*(N) := \Gamma_0^*(N) \backslash \mathbb H^*$ has only one cusp $\infty$, our modular curve $X_0(N)$ has some cusps other than $\infty$ when $N > 1$. Due to this difference, the values of $j_N(\tau)$ at all cusps other than $\infty$ will appear in our results.

Moreover, we introduce the notions and properties of $\Gamma$-equivalence and $\Gamma$-reduced forms about binary quadratic forms in Section 3 in order to find the representatives for $\mathrm{Q}_{D} / \Gamma$, where $\Gamma$ is a congruence subgroup of $\mathrm{SL}_2(\mathbb Z)$. These notions coincide with the usual proper equivalence and reduced forms when $\Gamma = \mathrm{SL}_2(\mathbb Z)$. These notions combined with the fundamental region for $\Gamma_0(p)$ presented in Section 4 will enable us to systematically compute the representatives for $\mathrm{Q}_{D, p} / \Gamma_0(p)$ for any prime $p$.

\begin{theorem}\label{theorem_main_p}
Let $p$ be a prime number such that the genus of $\Gamma_0(p)$ is zero, and let $n$ be a positive integer relatively prime to $p$. Then we have
\begin{eqnarray*}
\textrm{(1)} & \hspace{-.5cm} & \sum_{|r| < 2 \sqrt{n}} H(r^2 - 4n, \, p) \ = \ 2\sigma(n) \ - \ 2\sum_{d|n} \mathrm{min}\{d, \, n/d\} \ + \ \left\{ \begin{array}{ll}
\sum_{|r| < 2} H(r^2 - 4, \, p) & \mbox{if $n = \Box$} \\
0 & \mbox{otherwise}
\end{array} \right. \\
\textrm{(2)} & \hspace{-.5cm} & \sum_{|r| < 2 \sqrt{n}} t(r^2 - 4n, \, p) \ = \ -j_p(0) \sum_{d|n} \mathrm{min}\{d, \, n/d\} \ + \ \left\{ \begin{array}{ll}
j_p(0) \ + \ \sum_{|r| < 2} t(r^2 - 4, \, p) & \mbox{if $n = \Box$} \\
2 & \mbox{if $4n + 1 = \Box$} \\
0 & \mbox{otherwise.}
\end{array} \right.
\end{eqnarray*}
Here, $\Box$ denotes a perfect square.
\end{theorem}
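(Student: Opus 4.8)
The plan is to mimic Zagier's geometric argument, but carried out on $X_0(N)$ rather than $\mathrm{SL}_2(\mathbb Z)\backslash\mathbb H^*$. The starting point is the classical fact that the modular polynomial evaluated along the diagonal factors essentially as a product over CM points: for the Hauptmodul $j_p$ one considers, for each $n$ coprime to $p$, the function
\[
\Psi_n(\tau) \ = \ \prod_{Q}\bigl(j_p(\tau) - j_p(\tau_Q)\bigr),
\]
the product running over a set of representatives $Q$ of those $\Gamma_0(p)$-classes in $\bigcup_{r^2<4n}\mathrm{Q}_{r^2-4n,\,p}$ that arise from matrices of determinant $n$ acting on $\tau$. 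I would first identify $\Psi_n$ with a concrete modular function on $X_0(p)$ of known divisor — its zeros are precisely the CM points counted above (with multiplicities $\omega_{Q,p}$), and its only other relevant zeros/poles occur at the cusps of $X_0(p)$. Reading off the $q$-expansion of $\log\Psi_n$ at $\infty$, the coefficient of $q^{-1}$ is $-t(r^2-4n,p)$ summed over $|r|<2\sqrt n$ (this is where the hypothesis that $j_p$ has no constant term is used), and the divisor at the remaining cusps is what produces the term $-j_p(0)\sum_{d\mid n}\min\{d,n/d\}$; correspondingly, comparing orders of vanishing gives the class-number identity (1).

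Concretely, I would proceed in the following steps. \textbf{Step 1.} Determine the divisor of the modular equation $\Phi_n(X,j_p)$ on $X_0(p)$: away from the cusps this is a sum of CM points, and each CM point of discriminant $r^2-4n$ contributes with the correct weight $1/\omega_{Q,p}$, so that $\sum_Q 1/\omega_{Q,p} = \sum_{|r|<2\sqrt n}H(r^2-4n,p)$ is the total degree of the CM part. \textbf{Step 2.} Determine the order of $\Psi_n$ at each cusp of $X_0(p)$; since $\Gamma_0(p)$ has exactly two cusps, $\infty$ and $0$, this amounts to two local computations. At $\infty$ the order is read from the known $q$-expansion; at $0$ one uses the Fricke (or Atkin–Lehner) involution $W_p$ to transfer the computation back to $\infty$, and the local multiplicity there is governed by the divisor-function data $\sum_{d\mid n}d$ and $\sum_{d\mid n}n/d$, whose interplay yields $\sigma(n) - \sum_{d\mid n}\min\{d,n/d\}$ after the two cusps are combined. \textbf{Step 3.} Because $\Psi_n$ is a modular function on a genus-zero curve, it is (up to a constant) a rational function of $j_p$; equate the degree of the divisor of zeros with the degree of the divisor of poles. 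The pole divisor at $\infty$ has degree $\sum_{|r|<2\sqrt n}H(r^2-4n,p)$ counted suitably, and the zero/pole contribution at $0$ plus the ``diagonal'' square-root cases $n=\Box$ and $4n+1=\Box$ (which arise exactly as in Zagier, from the points where two roots of $\Phi_n$ collide or where a root lies at a cusp) give the right-hand sides. \textbf{Step 4.} Extract the $q^{-1}$-coefficient of the logarithmic derivative (or of $\log\Psi_n$) at $\infty$ to get the trace identity (2); the constant $-j_p(0)$ appears precisely as the leading behaviour of $j_p$ at the cusp $0$.

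The exceptional terms deserve separate care. The case $n=\Box$ contributes $\sum_{|r|<2}H(r^2-4,p)$ in (1) and $j_p(0)+\sum_{|r|<2}t(r^2-4,p)$ in (2): geometrically this is the locus where the isogeny degenerates and one picks up the $n=1$ configuration shifted by a cusp, so I would isolate it by treating the factor of $\Phi_n$ corresponding to $\tau\mapsto\tau$ (equivalently the diagonal in the Hecke correspondence) separately from the genuinely non-trivial isogenies. The case $4n+1=\Box$, contributing the clean constant $2$, comes from the self-intersection of the Hecke curve $T_n$ with the diagonal at a point of discriminant $-3$ or $-4$, exactly as in Zagier's $\mathrm{SL}_2(\mathbb Z)$ computation, and the prime-to-$p$ hypothesis guarantees this point is not a cusp of $X_0(p)$ and contributes the same way it does classically.

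I expect the main obstacle to be the bookkeeping at the cusp $0$: one must correctly match $\Gamma_0(p)$-classes of quadratic forms in $\mathrm{Q}_{D,p}$ with matrices of determinant $n$, track how the Atkin–Lehner involution permutes these classes, and verify that the local contribution at $0$ assembles into exactly $\sigma(n)-\sum_{d\mid n}\min\{d,n/d\}$ rather than something off by the symmetric term $\sum_{d\mid n}\min\{d,n/d\}$ itself — i.e. distinguishing the roles of $\max$ and $\min$, which in Zagier's one-cusp setting collapse into a single $\max\{d,n/d\}$. Verifying that the two cusps' contributions combine to replace Zagier's $\sum_d\max\{d,n/d\}$ by $2\sigma(n)-2\sum_d\min\{d,n/d\}=\sum_d\bigl(\max\{d,n/d\}+\min\{d,n/d\}\bigr) - 2\sum_d\min\{d,n/d\} + \ldots$ and land on the stated form is the crux of the whole argument, and I would do this step most carefully, probably checking it against $N=p=2$ by hand before trusting the general computation.
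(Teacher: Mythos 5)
Your proposal follows essentially the same route as the paper: compute the divisor of the modular equation $\Phi_n^{\scalebox{0.6}{$\Gamma_0(p)$}}(j_p,j_p)$ on $X_0(p)$, identify the finite part with the weighted CM points $\prod_{|r|<2\sqrt n}H_{r^2-4n,p}(j_p)$, compute the order at the cusp $0$ (which for $p$ prime is exactly $\sum_{d\mid n}\min\{d,n/d\}$, with no congruence condition), and then read off (1) from degrees and (2) from the next-to-leading $q$-coefficient, treating $n=\Box$ by first dividing out the factor $X-Y$. The only substantive variations are cosmetic (you propose Atkin--Lehner to handle the cusp $0$ where the paper directly computes the width of that cusp for the conjugated group, and your intermediate expression $\sigma(n)-\sum_{d\mid n}\min\{d,n/d\}$ for the cusp contribution should really be $\sum_{d\mid n}\min\{d,n/d\}$ --- a slip you already flag as the bookkeeping you would verify last).
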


\begin{remark}
The modular curve $X_0(p)$ has two cusps $\infty$ and $0$. Observe that the value of $j_p(\tau)$ at the cusp $0$ appears in (2).
\end{remark}

Let $N = 2, 3, 4, 5, 7, 9, 13, 25$. Then $j_N(\tau)$ is given as
\[ j_N(\tau) \ = \ \left(\frac{\eta(\tau)}{\eta(N\tau)}\right)^{\frac{24}{N-1}} \ + \ \frac{24}{N-1}, \]
where $\eta(\tau) = q^{\frac{1}{24}} \prod_{n=1}^\infty (1 - q^n)$ is the Dedekind eta function. Using the transformation formula $\eta(-1/\tau) = \sqrt{-i \tau} \eta(\tau)$, we are able to compute the value of $j_N(\tau)$ at the cusp $0$ as $j_N(0) = \frac{24}{N - 1}$.

\begin{example}[$N = 2$]
Let $n \in \mathbb N$ be odd. According to Example \ref{Example - 2, 3, 4} in Section 3, we have $\mathrm{Q}_{-3, 2} = \phi$ and $\mathrm{Q}_{-4, 2}/\Gamma_0(2) = \{ [2x^2 + 2xy + y^2] \}$. Because $(-1 + i)/2$ is an elliptic point of order $2$ for $\Gamma_0(2)$, we have $H(-3, 2) = t(-3, 2) = 0$ and $H(-4, 2) = 1/2$. Moreover, we can compute that $j_2(\frac{-1+i}{2}) = -40$, from which we get $t(-4, 2) = -20$. Hence we obtain
\begin{eqnarray*}
\textrm{(1)} & \hspace{-.5cm} & \sum_{|r| < 2 \sqrt{n}} H(r^2 - 4n, \, 2) \ = \ 2\sigma(n) \ - \ 2\sum_{d|n} \mathrm{min}\{d, \, n/d\} \ + \ \left\{ \begin{array}{ll}
1/2 & \mbox{if $n = \Box$} \\
0 & \mbox{otherwise}
\end{array} \right. \\
\textrm{(2)} & \hspace{-.5cm} & \sum_{|r| < 2 \sqrt{n}} t(r^2 - 4n, \, 2) \ = \ -24 \sum_{d|n} \mathrm{min}\{d, \, n/d\} \ + \ \left\{ \begin{array}{ll}
4 & \mbox{if $n = \Box$} \\
2 & \mbox{if $4n + 1 = \Box$} \\
0 & \mbox{otherwise.}
\end{array} \right.
\end{eqnarray*}
\end{example}

\begin{example}[$N = 3$]
Let $n \in \mathbb N$ be relatively prime to $3$. We obtain that $\mathrm{Q}_{-3, 3}/\Gamma_0(3) = \{ [3x^2 + 3xy + y^2] \}$ and $\mathrm{Q}_{-4, 3} = \phi$, and that $(-3 + \sqrt{-3})/6$ is an elliptic point of order $3$ for $\Gamma_0(3)$. Thus $H(-3, 3) = 1/3$ and $H(-4, 3) = t(-4, 3) = 0$. We further have $t(-3, 3) = -5$ because $j_3(\frac{-3+\sqrt{-3}}{6}) = -15$. Hence we get
\begin{eqnarray*}
\textrm{(1)} & \hspace{-.5cm} & \sum_{|r| < 2 \sqrt{n}} H(r^2 - 4n, \, 3) \ = \ 2\sigma(n) \ - \ 2\sum_{d|n} \mathrm{min}\{d, \, n/d\} \ + \ \left\{ \begin{array}{ll}
2/3 & \mbox{if $n = \Box$} \\
0 & \mbox{otherwise}
\end{array} \right. \\
\textrm{(2)} & \hspace{-.5cm} & \sum_{|r| < 2 \sqrt{n}} t(r^2 - 4n, \, 3) \ = \ -12 \sum_{d|n} \mathrm{min}\{d, \, n/d\} \ + \ \left\{ \begin{array}{ll}
2 & \mbox{if $n = \Box$ or $4n + 1 = \Box$} \\
0 & \mbox{otherwise.}
\end{array} \right.
\end{eqnarray*}
\end{example}

\begin{example}[$N = 5$]
Assume that $n \in \mathbb N$ is relatively prime to $5$. Applying Example \ref{Example - p} in Section 3, we see that $\mathrm{Q}_{-3, 5} = \phi$ and
\[ \mathrm{Q}_{-4, 5}/\Gamma_0(5) \ = \ \{ [5x^2 + 4xy + y^2], \ [5x^2 - 4xy + y^2] \}. \]
If we put $Q = 5x^2 + 4xy + y^2$ and $Q' = 5x^2 - 4xy + y^2$, then both of $\tau_{Q} = (-2 + i)/5$ and $\tau_{Q'} = (2 + i)/5$ are turned out to be elliptic points of order $2$ for $\Gamma_0(5)$. Thus $\omega_{Q, 5} = \omega_{Q', 5} = 2$ and hence $H(-3, 5) = t(-3, 5) = 0$ and $H(-4, 5) = 1$. We also have $t(-4, 5) = -5$ because $j_5(\tau_Q) = -5 + 2i$ and $j_5(\tau_{Q'}) = -5 - 2i$. Therefore Theorem \ref{theorem_main_p} says that
\begin{eqnarray*}
\textrm{(1)} & \hspace{-.5cm} & \sum_{|r| < 2 \sqrt{n}} H(r^2 - 4n, \, 5) \ = \ 2\sigma(n) \ - \ 2\sum_{d|n} \mathrm{min}\{d, \, n/d\} \ + \ \left\{ \begin{array}{ll}
1 & \mbox{if $n = \Box$} \\
0 & \mbox{otherwise}
\end{array} \right. \\
\textrm{(2)} & \hspace{-.5cm} & \sum_{|r| < 2 \sqrt{n}} t(r^2 - 4n, \, 5) \ = \ -6 \sum_{d|n} \mathrm{min}\{d, \, n/d\} \ + \ \left\{ \begin{array}{ll}
1 & \mbox{if $n = \Box$} \\
2 & \mbox{if $4n + 1 = \Box$} \\
0 & \mbox{otherwise.}
\end{array} \right.
\end{eqnarray*}
\end{example}

We also obtain results similar to Theorem \ref{theorem_main_p}, in which the level need not be a prime number. See Theorem \ref{theorem_main1}, Corollary \ref{corollary_main1}, and Theorem \ref{theorem_main2} for detailed statements.

\begin{example}[$N = 4$]
Assume that $n \in \mathbb N$ is odd. The $\Gamma_0(4)$ has three cusps $\infty$, $0$, and $1/2$, and we can evaluate the values at the cusps as $j_4(0) = 8$ and $j_4(1/2) = -8$. Because $\mathrm{Q}_{-4, 4} = \mathrm{Q}_{-3, 4} = \phi$, we have $H(-4, 4) = H(-3, 4) = t(-4, 4) = t(-3, 4) = 0$ and hence we get
\begin{eqnarray*}
\textrm{(1)} & \hspace{-.5cm} & \sum_{|r| < 2 \sqrt{n}} H(r^2 - 4n, 4) \ = \ 2\sigma(n) \ - \ 3 \sum_{d|n} \mathrm{min}\{d, \, n/d \} \ + \ \left\{ \begin{array}{ll}
1 & \mbox{if $n = \Box$} \\
0 & \mbox{otherwise}
\end{array} \right. \\
\textrm{(2)} & \hspace{-.5cm} & \sum_{|r| < 2 \sqrt{n}} t(r^2 - 4n, 4) \ = \ \left\{ \begin{array}{ll}
2 & \mbox{if $4n + 1 = \Box$} \\
0 & \mbox{otherwise.}
\end{array} \right.
\end{eqnarray*}
\end{example}

\section{Modular equations and traces of singular moduli}

Assume that $\Gamma_0(N)$ is of genus zero. For a positive integer $n$, we define
\begin{eqnarray*}
\mathrm{M}_{n, N} & = & \{ \left(\begin{smallmatrix} a & b \\ c & d \end{smallmatrix} \right) \in \mathrm{M}_2(\mathbb Z) \, | \, ad - bc = n, \ (a, N) = 1, \ c \equiv 0 \pmod N \} \\
j_N(\tau) & = & \frac{1}{q} \ + \ \sum_{m = 1}^\infty c_m q^m \quad (c_m \in \mathbb Z) \quad \mbox{ a Hauptmodul for } \Gamma_0(N) \\
\Phi_n^{\scalebox{0.6}{$\Gamma_0(N)$}}(X, j_N) & = & \prod_{[\alpha] \in \Gamma_0(N) \backslash \mathrm{M}_{n, N}} \big(X \, - \, j_N(\alpha(\tau))\big).
\end{eqnarray*}
Clearly we have $\Phi_1^{\scalebox{0.6}{$\Gamma_0(N)$}}(X, j_N) = X - j_N$, so $\Phi_1^{\scalebox{0.6}{$\Gamma_0(N)$}}(X, Y) = X - Y$. Since all coefficients of the polynomial $\Phi_n^{\scalebox{0.6}{$\Gamma_0(N)$}}(X, j_N)$ in the variable $X$ are the elementary symmetric functions of the $j_N \circ \alpha$'s, they are invariant under $\Gamma_0(N)$. We can further deduce that they are contained in $\mathbb C(j_N)$ because $j_N$ is a Hauptmodul for the congruence subgroup $\Gamma_0(N)$ of genus zero. Thus we may think of $\Phi_n^{\scalebox{0.6}{$\Gamma_0(N)$}}(X, j_N)$ as a polynomial in $\mathbb C(j_N)[X]$.

According to \cite[Proposition 3.36]{Shi}, we can choose the representatives of all the distinct orbits in $\Gamma_0(N) \backslash \mathrm{M}_{n, N}$ as
\[
\alpha_{a, b} \ := \ \bigg( \, \begin{matrix}
a & b \\ 0 & n/a
\end{matrix} \bigg),
\]
where $a | n$, $(a, N) = 1$, and $0 \leq b < n/a$. The number of these representatives is $\sum_{d | n, \, (n/d, N) = 1} d$.

\begin{lemma}\label{lemma_Phi}
With the notation and assumptions as above, we have the following.

(1) $\Phi_n^{\scalebox{0.6}{$\Gamma_0(N)$}}(X, Y) \in \mathbb Q(Y)[X]$ is a polynomial in $X$ over $\mathbb Q(Y)$ of degree $\sum_{d | n, \, (n/d, N) = 1} d$.

(2) $\Phi_n^{\scalebox{0.6}{$\Gamma_0(N)$}}(X, Y) \in \mathbb Z[X, Y]$ if $(n, N) = 1$.

(3) If $(n, N) = 1$ and if $n$ is not a perfect square, then $\Phi_n^{\scalebox{0.6}{$\Gamma_0(N)$}}(X, X)$ has degree $\sum_{d | n} \mathrm{max}\{d, n/d \}$ and leading coefficient $(-1)^{\frac{1}{2} \sigma_0(n)}$.

(4) If $(n, N) = 1$ and if $n$ is a perfect square, then the polynomial $\left.\frac{\Phi_n^{\scalebox{0.6}{$\Gamma_0(N)$}}(X, Y)}{\Phi_1^{\scalebox{0.6}{$\Gamma_0(N)$}}(X, Y)}\right|_{Y = X}$ in $X$ has degree $\sum_{d | n} \mathrm{max}\{d, n/d \} - 1$ and leading coefficient $(-1)^{\frac{1}{2} (\sigma_0(n)-1)} \sqrt{n}$.
\end{lemma}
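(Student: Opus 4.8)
The plan is to imitate Zagier's argument for $\mathrm{SL}_2(\mathbb Z)$, carried out over the rational function field $\mathbb Q(j_N)$. For (1), the degree in $X$ is the number of cosets in $\Gamma_0(N)\backslash\mathrm{M}_{n,N}$, which by the displayed list of representatives $\alpha_{a,b}$ equals $\sum_{a\mid n,\,(a,N)=1}(n/a)=\sum_{d\mid n,\,(n/d,N)=1}d$. The coefficients of $\Phi_n^{\scalebox{0.6}{$\Gamma_0(N)$}}(X,j_N)$ in $X$ are, up to sign, the elementary symmetric functions $s_k(\tau)$ of the values $j_N(\alpha_{a,b}\tau)$; these are holomorphic on $\mathbb H$, they are $\Gamma_0(N)$-invariant because $\mathrm{M}_{n,N}$ is stable under right multiplication by $\Gamma_0(N)$ (so $\gamma\in\Gamma_0(N)$ merely permutes the cosets), and since each $j_N\circ\alpha_{a,b}$ has a Laurent expansion in $q^{1/n}$ they are meromorphic at the cusps. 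Hence the $s_k$ are modular functions for $\Gamma_0(N)$ and so lie in $\mathbb C(j_N)$, $j_N$ being a Hauptmodul; this gives $\Phi_n^{\scalebox{0.6}{$\Gamma_0(N)$}}(X,Y)\in\mathbb C(Y)[X]$. To descend to $\mathbb Q$ I would use that $j_N\bigl(\tfrac{a\tau+b}{n/a}\bigr)=\zeta_n^{-ab}q^{-a^2/n}+\sum_{m\ge1}c_m\zeta_n^{abm}q^{a^2m/n}$ (with $\zeta_n=e^{2\pi i/n}$) has coefficients in $\mathbb Z[\zeta_n]$, and that $\sigma_t\in\mathrm{Gal}(\mathbb Q(\zeta_n)/\mathbb Q)$ carries $j_N\circ\alpha_{a,b}$ to $j_N\circ\alpha_{a,b'}$ with $b'\equiv tb\pmod{n/a}$, hence permutes the family and fixes every $s_k$; so $s_k\in\mathbb Q((q))$, and since $j_N$ has rational $q$-expansion this forces $s_k\in\mathbb Q(j_N)$.

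For (2), assume $(n,N)=1$. The key point is that every $j_N\circ\alpha_{a,b}$ is holomorphic at all cusps of $X_0(N)$ other than $\infty$, which makes the $s_k$ have a pole only at $\infty$ and hence be polynomials in $j_N$ (a Hauptmodul normalized by $j_N=q^{-1}+\cdots$ has exactly one pole, a simple one at $\infty$, so $\mathbb C[j_N]$ is the ring of functions on $X_0(N)$ holomorphic away from $\infty$). Since $j_N$ has its only pole on $\Gamma_0(N)\infty$, it suffices to check $\alpha_{a,b}^{-1}\bigl(\Gamma_0(N)\infty\bigr)\subseteq\Gamma_0(N)\infty$: for $g/h$ in lowest terms with $N\mid h$ one computes $\alpha_{a,b}^{-1}(g/h)=\bigl((n/a)g-bh\bigr)/(ah)$, whose numerator is prime to $N$ (because $(n/a,N)=1$, $(g,N)=1$ and $N\mid h$) while its denominator $ah$ is divisible by $N$, so the reduced fraction still has $N$ in its denominator. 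This yields $s_k\in\mathbb Q[j_N]$. Finally the $q$-expansion of $s_k$ has coefficients in $\mathbb Z[\zeta_n]\cap\mathbb Q=\mathbb Z$, and since $j_N=q^{-1}+\sum_{m\ge1}c_mq^m$ has leading coefficient $1$ and integral coefficients, a descending induction on the degree (subtract off the top term repeatedly) shows the polynomial expressing $s_k$ in $j_N$ lies in $\mathbb Z[X]$; hence $\Phi_n^{\scalebox{0.6}{$\Gamma_0(N)$}}(X,Y)\in\mathbb Z[X,Y]$. This cusp analysis is the step I expect to require the most care.

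For (3) and (4), put $P(X):=\Phi_n^{\scalebox{0.6}{$\Gamma_0(N)$}}(X,X)$. Suppose first $n\ne\Box$; then $P\in\mathbb Z[X]$ and $P(j_N(\tau))=F(\tau)$, where
\[ F(\tau)\ =\ \prod_{a\mid n}\ \prod_{b=0}^{n/a-1}\Bigl(j_N(\tau)-j_N\bigl(\tfrac{a\tau+b}{n/a}\bigr)\Bigr), \]
the condition $(a,N)=1$ being automatic since $(n,N)=1$. Because $j_N=q^{-1}+O(q)$ has no constant term, the lowest-order term in $q^{1/n}$ of the $(a,b)$-factor is $q^{-1}$ when $a^2<n$ and $-\zeta_n^{-ab}q^{-a^2/n}$ when $a^2>n$ (the case $a^2=n$ is excluded), so multiplying over $b$ yields $q^{-n/a}$ for each $a<\sqrt n$ and, after a short computation using $\sum_{b=0}^{n/a-1}b=\tfrac12(n/a)(n/a-1)$, exactly $-q^{-a}$ for each $a>\sqrt n$. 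As precisely $\tfrac12\sigma_0(n)$ divisors of $n$ exceed $\sqrt n$, the lowest-order term of $F$ is $(-1)^{\frac12\sigma_0(n)}q^{-\sum_{d\mid n}\max\{d,n/d\}}$, and comparison with $P(j_N)=(\text{leading coeff})\,q^{-\deg P}+\cdots$ gives (3). Now suppose $n=m^2$; the factor indexed by $\alpha_{m,0}=mI$ is identically zero (as $mI$ acts trivially on $\mathbb H$), so $\Phi_n^{\scalebox{0.6}{$\Gamma_0(N)$}}(Y,Y)=0$ and $X-Y$ divides $\Phi_n^{\scalebox{0.6}{$\Gamma_0(N)$}}(X,Y)$, say $\Phi_n^{\scalebox{0.6}{$\Gamma_0(N)$}}(X,Y)=(X-Y)\Psi(X,Y)$ with $\Psi\in\mathbb Z[X,Y]$, and then $\Psi(j_N(\tau),j_N(\tau))=\prod_{[\alpha]\ne[mI]}\bigl(j_N(\tau)-j_N(\alpha\tau)\bigr)$. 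For $1\le b\le m-1$ the $(m,b)$-factor has lowest-order term $(1-\zeta_m^{-b})q^{-1}$, with $\prod_{b=1}^{m-1}(1-\zeta_m^{-b})=m$; redoing the count above with these $m-1$ extra factors and $\#\{a\mid n:a>\sqrt n\}=\tfrac12(\sigma_0(n)-1)$, the lowest-order term of $\Psi(j_N,j_N)$ is $(-1)^{\frac12(\sigma_0(n)-1)}\,m\,q^{-(\sum_{d\mid n}\max\{d,n/d\}-1)}$, which is exactly (4).
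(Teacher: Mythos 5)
Your proposal is correct and follows essentially the same route as the paper's proof: the Galois action on $q^{1/n}$-expansions to descend the coefficients to $\mathbb Q(j_N)$, the cusp computation showing each $j_N\circ\alpha_{a,b}$ has poles only at cusps equivalent to $\infty$ (giving polynomiality in $Y$) together with integrality of Fourier coefficients, and the leading-term computations via $\prod_{0\le b<n/a}(-\zeta_n^{-ab})=-1$ and $\prod_{b=1}^{m-1}(1-\zeta_m^{-b})=m$. No changes needed.
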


\begin{proof}
The proof is essentially the same as the proof of \cite[\S2 in Chapter 5]{Lang} or \cite[Theorem 11.18]{Cox}, in which they presented the classical result about the primitive modular equation for $j(\tau)$.

(1) For any integer $k$ relatively prime to $n$, let $\psi_k \in \mathrm{Gal}(\mathbb Q(\zeta_n) / \mathbb Q)$ be an automorphism defined by $\psi_k(\zeta_n) = \zeta_n^k$. Then $\psi_k$ induces an automorphism of $\mathbb Q(\zeta_n)((q^{\frac{1}{n}}))$ in a natural way and we denote it by the same letter $\psi_k$. Because
\[ j_N(\alpha_{a, b}(\tau)) \ = \ \zeta_n^{-ab} q^{-\frac{a^2}{n}} \ + \ \sum_{m = 1}^\infty c_m \zeta_n^{abm} q^{\frac{ma^2}{n}}, \]
we get
\begin{eqnarray*}
\psi_k(j_N(\alpha_{a, b}(\tau))) & = & \zeta_n^{-abk} q^{-\frac{a^2}{n}} \ + \ \sum_{m = 1}^\infty c_m \zeta_n^{abkm} q^{\frac{ma^2}{n}} \\
& = & j_N(\alpha_{a, b'}(\tau)),
\end{eqnarray*}
where $b'$ is the unique integer satisfying $0 \leq b' < n/a$ and $bk \equiv b' \pmod{n/a}$. This shows that all the elementary symmetric functions of the $j_N \circ \alpha_{a, b}$'s are actually contained in $\mathbb Q((q^{\frac{1}{n}}))$. Because $\mathbb C(j_N) \cap \mathbb Q((q^{\frac{1}{n}})) = \mathbb Q(j_N)$, we have $\Phi_n^{\scalebox{0.6}{$\Gamma_0(N)$}}(X, j_N) \in \mathbb Q(j_N)[X]$.

(2) Let $s \in \mathbb Q \cup \{ \infty \}$. Note that
\begin{eqnarray*}
& & j_N \circ \alpha_{a, b} \mbox{ has a pole at the cusp $s$} \\
& \Longleftrightarrow & \alpha_{a, b}(s) \sim \infty \mbox{ under } \Gamma_0(N) \\
& \Longleftrightarrow & s = \frac{\frac{n}{a}i - bj}{aj} \mbox{ for some $i, j \in \mathbb Z$ with $(i, j) = 1$, $(i, N) = 1$, $j \equiv 0 \pmod N$}.
\end{eqnarray*}
Because $(\frac{n}{a}i - bj, N) = 1$ and $aj \equiv 0 \pmod N$, we see that all the elementary symmetric functions of the $j_N \circ \alpha_{a, b}$'s are holomorphic at all cusps of $\Gamma_0(N)$ except $\infty$. They are also holomorphic on $\mathbb H$ clearly and hence we deduce that $\Phi_n^{\scalebox{0.6}{$\Gamma_0(N)$}}(X, j_N)$ are contained in $\mathbb Q[j_N][X]$. Since $j_N \circ \alpha_{a, b}$ has its Fourier coefficients in $\mathbb Z[\zeta_n]$, all Fourier coefficients of the elementary symmetric functions of the $j_N \circ \alpha_{a, b}$'s must be algebraic integers. This shows that $\Phi_n^{\scalebox{0.6}{$\Gamma_0(N)$}}(X, j_N)$ are actually contained in $\mathbb Z[j_N][X]$.

(3) Because $\Phi_n^{\scalebox{0.6}{$\Gamma_0(N)$}}(X, X) \in \mathbb Z[X]$, we have
\[ \Phi_n^{\scalebox{0.6}{$\Gamma_0(N)$}}(j_N, j_N) \, = \, a_\ell q^{-\ell} \, + \, \mbox{(higher degree terms)}, \]
where $\ell = \deg_X \Phi_n^{\scalebox{0.6}{$\Gamma_0(N)$}}(X, X)$ and $a_\ell \in \mathbb Z$. Since
\[ j_N(\tau) \ - \ j_N(\alpha_{a, b}(\tau)) \ = \ q^{-1} \ - \ \zeta_n^{-ab} q^{-\frac{a^2}{n}} \ + \ \mbox{(terms of degree $> 0$)}, \]
we compute
\[ \ell \ = \ \sum_{a|n} \sum_{0 \leq b < n/a} \mathrm{max}\{1, a^2/n \} \ = \ \sum_{a|n} \frac{n}{a} \mathrm{max}\{ 1, a^2/n \} \ = \ \sum_{a|n} \mathrm{max}\{ n/a, a \} \]
and
\[ a_\ell \ = \ \prod_{a|n \atop a > \sqrt{n}} \prod_{0 \leq b < n/a} (-\zeta_n^{-ab}) \ = \ \prod_{a|n \atop a > \sqrt{n}} (-1) \ = \ (-1)^{\frac{1}{2} \sigma_0(n)}. \]

(4) The degree $\ell$ and leading coefficient $a_\ell$ of $\left.\frac{\Phi_n^{\scalebox{0.6}{$\Gamma_0(N)$}}(X, Y)}{\Phi_1^{\scalebox{0.6}{$\Gamma_0(N)$}}(X, Y)}\right|_{Y = X}$ are similarly computed as
\[ \ell \ = \ \sum_{a|n} \sum_{0 \leq b < n/a} \mathrm{max}\{1, a^2/n \} \ - \ 1 \ = \ \sum_{a|n} \mathrm{max}\{ n/a, a \} \ - \ 1 \]
and
\[ a_\ell \ = \ \prod_{a|n \atop a > \sqrt{n}} \prod_{0 \leq b < n/a} (-\zeta_n^{-ab}) \ \times \ \prod_{0 < b < \sqrt{n}} (1 - \zeta_{\small\sqrt{n}}^{-b}) \ = \ (-1)^{\frac{1}{2} (\sigma_0(n)-1)} \sqrt{n}, \]
since the case that $a = \sqrt{n}$ and $b = 0$ needs to be removed in the computation of (3).
\end{proof}

\begin{remark}
The author has also obtained some properties about the primitive modular equations for $\Gamma_1(m) \bigcap \Gamma_0(mN)$ (see \cite[Theorem 2.1]{Cho2}), and more generally for $\Gamma_H(N, t)$ and $\Gamma$ with $[\Gamma: \Gamma_H(N, t)] = 2$ (see \cite[Theorems 2.1 and 2.2]{Cho3}).
\end{remark}

Given a negative integer $D$, we define
\begin{eqnarray*}
\mathrm{Q}_{D, N} & = & \{ ax^2 + bxy + cy^2 \, | \, a > 0, \ b^2 - 4ac = D, \ a \equiv 0 \! \! \! \pmod{N} \} \\
H_{D, N}(X) & = & \prod_{[Q] \in \mathrm{Q}_{D, N} / \Gamma_0(N)} \big( X - j_N(\tau_Q) \big)^{1 / \omega_{Q, N}} \qquad (\omega_{Q, N} \ = \ |\overline{\Gamma}_0(N)_Q|).
\end{eqnarray*}
Here $\mathrm{Q}_{D, N} / \Gamma_0(N)$ denotes the set of all equivalence classes in $\mathrm{Q}_{D, N}$ under the so-called $\Gamma_0(N)$-equivalence defined by
\[ Q \sim Q' \quad \mbox{if} \ Q' = Q \cdot \gamma := Q(ax + by, cx + dy) \mbox{ for some } \gamma = (\begin{smallmatrix} a & b \\ c & d \end{smallmatrix}) \in \Gamma_0(N), \]
and $\tau_Q$ denotes the unique solution of $Q(x, 1) = 0$ on $\mathbb H$ for a quadratic form $Q \in \mathrm{Q}_{D, N}$.

\begin{remark}
The product in the definition of $H_{D, N}(X)$ is a well-defined finite product according to Theorem \ref{Theorem - reduced} and $\tau_{Q \cdot \gamma} = \gamma^{-1}(\tau_Q)$ (see Section 3). It is also worth noticing that $Q = Q'$ if and only if $\tau_Q = \tau_{Q'}$ and both $Q$ and $Q'$ have the same discriminant.
\end{remark}

Let $S$ denote the set of all inequivalent cusps of $\Gamma_0(N)$.  For any cusp $s = \frac{a}{b} \in S$ with $(a, b) = 1$ and $b \geq 0$, we write $b_s := b$ and define
\[ \nu_{s, n, N} \ = \ \sum \min \{d, \, n/d\}, \]
where the summation is over all the positive divisors $d$ of $n$ such that $n \equiv d^2 \pmod {(b_s, \frac{N}{(b_s, N)})}$. (Here we understand that $\frac{\pm 1}{0} = \infty$.)

\begin{theorem}\label{theorem_main1}
Assume that the genus of $\Gamma_0(N)$ is zero and that $n$ is a positive integer relatively prime to $N$. Assume further that $n$ is not a perfect square. Then we have
\[
\Phi_n^{\scalebox{0.6}{$\Gamma_0(N)$}}(X, \, X) \ = \ (-1)^{\frac{1}{2}\sigma_0(n)} \ \times \ \prod_{r \in \mathbb Z \atop |r| < 2 \sqrt{n}} H_{r^2 - 4n, N}(X) \ \times \ \prod_{s \in S \atop s \not \sim \infty} \big(X \, - \, j_N(s)\big)^{\nu_{s, n, N}},
\]
where $S$ is the set of all inequivalent cusps of $\Gamma_0(N)$.
\end{theorem}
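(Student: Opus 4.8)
The plan is to put $X = j_N(\tau)$ and study the modular function
\[
F_n(\tau) \ := \ \Phi_n^{\scalebox{0.6}{$\Gamma_0(N)$}}\big(j_N(\tau),\, j_N(\tau)\big) \ = \ \prod_{[\alpha]\,\in\,\Gamma_0(N)\backslash\mathrm{M}_{n,N}} \big(j_N(\tau) - j_N(\alpha\tau)\big)
\]
on $X_0(N)$. By parts (2) and (3) of Lemma~\ref{lemma_Phi}, $\Phi_n^{\scalebox{0.6}{$\Gamma_0(N)$}}(X,X)$ is a polynomial in $X$ of degree $\ell := \sum_{d\mid n}\max\{d,n/d\}$ with leading coefficient $(-1)^{\frac12\sigma_0(n)}$; since $j_N$ identifies $X_0(N)$ with $\mathbb{P}^1$, it suffices to compare the divisors of the two sides viewed as rational functions of $j_N$. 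The only pole of $F_n$ sits at the cusp $\infty$, of order $\ell$, so it is in fact enough to check that $\mathrm{ord}_P(F_n)$ agrees with the order of the right-hand side at every \emph{finite} point $P$ of $X_0(N)$, i.e.\ at every CM point and every cusp $s\not\sim\infty$. Once that is done, the ratio of the two sides is a rational function on $\mathbb{P}^1$ with no zero or pole off $j_N=\infty$, hence a constant, which equals $1$ by comparing leading terms. (A byproduct is that the right-hand side, despite the fractional exponents $1/\omega_{Q,N}$ in $H_{D,N}(X)$, is a genuine polynomial.)

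The function $F_n$ vanishes at $\tau_0\in\mathbb{H}$ exactly when $j_N(\alpha\tau_0)=j_N(\tau_0)$ for some coset representative $\alpha$, i.e.\ when $\alpha\tau_0\sim_{\Gamma_0(N)}\tau_0$, i.e.\ when some $\beta=\gamma^{-1}\alpha$ with $\gamma\in\Gamma_0(N)$ --- necessarily of determinant $n$ with lower-left entry $\equiv 0\pmod N$ --- fixes $\tau_0$. Writing $\beta=(\begin{smallmatrix}A&B\\C&D\end{smallmatrix})$, the point $\tau_0$ is the root of $Q_\beta=Cx^2+(D-A)xy-By^2$, whose discriminant is $r^2-4n$ with $r=\mathrm{tr}\,\beta$; a root in $\mathbb{H}$ forces $|r|<2\sqrt n$, so the zeros of $F_n$ in $\mathbb{H}$ are exactly the CM points of the discriminants $r^2-4n$ $(|r|<2\sqrt n)$, with $Q_\beta\in\mathrm{Q}_{r^2-4n,N}$. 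As $n$ is not a perfect square, a short computation with $\beta'(\tau_0)=\bar\lambda/\lambda$ (for $\lambda$ an eigenvalue of $\beta$) shows that each factor $j_N(\tau)-j_N(\alpha\tau)$ with $\alpha\tau_0\sim\tau_0$ vanishes at $\tau_0$ to order exactly $\omega_{Q_0,N}$, where $Q_0$ is the primitive form with root $\tau_0$; accounting for the ramification index $\omega_{Q_0,N}$ of $\mathbb{H}\to X_0(N)$ at $\tau_0$, this gives $\mathrm{ord}_{[\tau_0]}(F_n)=\#\{\Gamma_0(N)\alpha : \alpha\tau_0\sim_{\Gamma_0(N)}\tau_0\}$. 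The assignment $\alpha\mapsto\beta$ identifies this set with $\Gamma_0(N)_{\tau_0}\backslash\{\beta\in\mathrm{M}_2(\mathbb{Z}): \det\beta=n,\ C\equiv 0\ (N),\ \beta\tau_0=\tau_0\}$, and such $\beta$ are precisely those with $Q_\beta=\pm tQ_0$ for a positive integer $t$ with $\tfrac{N}{(a_0,N)}\mid t$ and $4n-t^2|D_0|$ a non-negative perfect square, where $a_0,D_0$ are the leading coefficient and discriminant of $Q_0$. Counting these, dividing by $|\Gamma_0(N)_{\tau_0}|=2\,\omega_{Q_0,N}$, and comparing with $\sum_{|r|<2\sqrt n}\mathrm{ord}_{[\tau_0]}\!\big(H_{r^2-4n,N}(j_N)\big)$, one finds both equal $\tfrac1{\omega_{Q_0,N}}\#\{(t,r): t>0,\ \tfrac{N}{(a_0,N)}\mid t,\ r^2=4n-t^2|D_0|\}$. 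This part runs parallel to Zagier's treatment of $N=1$.

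Now consider a cusp $s\not\sim\infty$. Because $(n,N)=1$ and $N\nmid b_s$, no representative $\alpha_{a,b}$ sends $s$ to a cusp $\sim\infty$, so (by the argument proving Lemma~\ref{lemma_Phi}(2)) $F_n$ is holomorphic at $[s]$ and vanishes there precisely when $\alpha_{a,b}(s)\sim_{\Gamma_0(N)}s$ for some $(a,b)$. Choosing $\sigma\in\mathrm{SL}_2(\mathbb{Z})$ with $\sigma(\infty)=s$ and expanding $j_N$ in the local parameter $q_{h_s}=e^{2\pi i\,\sigma^{-1}\tau/h_s}$ ($h_s$ the width of $s$), one finds that each qualifying factor corresponds to some $\beta=\gamma\alpha_{a,b}$, $\gamma\in\Gamma_0(N)$, fixing $s$, with $\det\beta=n$, lower-left entry $\equiv 0\pmod N$, and $\sigma^{-1}\beta\sigma$ upper triangular with diagonal $(u,v)$, $\{u,v\}=\{a,n/a\}$; such a factor vanishes at $[s]$ to order $\min\{1,u/v\}$. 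Determining which $\alpha_{a,b}$ qualify translates, through the condition $C\equiv 0\pmod N$ on the conjugated matrices, into the congruence $n\equiv d^2\pmod{(b_s,N/(b_s,N))}$ on the associated divisor $d\mid n$, and summing the local orders of all qualifying factors, organized by that divisor, yields $\nu_{s,n,N}$. Thus $\mathrm{ord}_{[s]}(F_n)=\nu_{s,n,N}$; on the other side, each $H_{r^2-4n,N}(j_N)$ has no zero at a cusp, while $(j_N-j_N(s'))^{\nu_{s',n,N}}$ has order $\nu_{s,n,N}$ at $[s]$ exactly when $s'\sim s$, so the right-hand side has order $\nu_{s,n,N}$ there as well.

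With the interior and cusp computations in hand, $\mathrm{div}(F_n)$ and the divisor of the right-hand side agree at every finite point of $X_0(N)$, and the theorem follows from the reduction in the first paragraph. I expect the cusp analysis to be the main obstacle: isolating the exact congruence $n\equiv d^2\pmod{(b_s,N/(b_s,N))}$ that selects the contributing divisors, and verifying that their local orders sum to $\nu_{s,n,N}$, requires careful bookkeeping of cusp widths and of how the condition $C\equiv 0\pmod N$ transforms under conjugation by $\sigma$. This is the genuinely new ingredient beyond Zagier's setting, where $X_0(1)$ has only the cusp $\infty$ and no such term appears.
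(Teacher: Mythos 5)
Your overall strategy coincides with the paper's: compare divisors of the two sides at the CM points and at the cusps $s\not\sim\infty$, note that the only pole is at $\infty$, and pin down the multiplicative constant via the leading coefficient from Lemma~\ref{lemma_Phi}(3). Your interior analysis is essentially the paper's Claim~1, with a Zagier-style count of matrices fixing $\tau_0$ in place of the bijection of Lemma~\ref{lemma_qbar}; the bookkeeping you describe does close up correctly. One step you dismiss too quickly, however, is the assertion that each qualifying factor vanishes to order exactly $\omega_{Q_0,N}$: this amounts to $\beta'(\tau_0)^e\neq 1$ with $e=\omega_{Q_0,N}$, and when $e=2$ and $\mathrm{tr}(\beta)=0$ the eigenvalue formula gives $\beta'(\tau_0)^2=(\bar\lambda/\lambda)^2=1$ on the nose (here $\lambda=i\sqrt n$), with no contradiction coming from $n$ not being a square. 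Excluding this case needs the separate observation that an order-$2$ elliptic point of $\Gamma_0(N)$ is an $\mathrm{SL}_2(\mathbb Z)$-translate of $i$ and hence has rational imaginary part, while a trace-zero $\beta$ of determinant $n$ forces $\mathrm{Im}(\tau_0)=\sqrt n/|c_\beta|\notin\mathbb Q$; this is precisely the content of Lemma~\ref{lemma_j_N}, not a one-line computation.

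The genuine gap is in the cusp computation. You claim that the upper-triangular matrix $\sigma^{-1}\beta\sigma$ has diagonal $\{a,n/a\}$. This is false in general: writing $s=u/v$ in lowest terms, the upper-left entry of the upper-triangular representative of the coset $\mathrm{SL}_2(\mathbb Z)\,\sigma^{-1}\alpha_{a,b}\sigma$ equals the gcd of the entries of $\alpha_{a,b}\big(\begin{smallmatrix}u\\v\end{smallmatrix}\big)$, which is $d:=(au+bv,\,n/a)$; so the diagonal is $\{d,n/d\}$ and the local order of the factor at $[s]$ is $\min\{1,d^2/n\}$, not $\min\{1,a^2/n\}$. (Equivalently, the order is governed by the ratio of the width of $s$ for $\Gamma_0(N)$ to its width for $\alpha_{a,b}^{-1}\Gamma_0(N)\alpha_{a,b}$, which is what the paper computes.) This is not a harmless relabelling: for the sum of local orders to telescope to $\nu_{s,n,N}=\sum\min\{d,n/d\}$, the divisor $d$ appearing in the selecting congruence $n\equiv d^2\pmod{(v,N/v)}$ and the divisor controlling the local order must be the same one. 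With your identification the totals disagree; for instance with $N=25$, $s=1/5$, $n=6$ one has $\nu_{s,6,25}=2$, whereas summing $\min\{1,a^2/6\}$ over the qualifying pairs $(a,b)$ gives $23/6$, which is not even an integer. The cusp analysis therefore has to be redone with the correct local exponents, which is exactly the bookkeeping carried out in Lemma~\ref{lemma_cusp} and Claim~2 of the paper's proof.
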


To prove this, we need more lemmas. Let $P_0 \in X$ be a point on a compact Riemann surface $X$ of genus zero, and let $t(z)$ be a local parameter at $P_0$, and let $\mathrm{ord}_{P_0}(f)$ denote the order of a meromorphic function $f(z)$ on $X$ at $P_0$. Then we have
\[
f(z) \ = \ \sum_{m = \mathrm{ord}_{P_0}(f)}^\infty c_m t(z)^m
\]
with $c_{\mathrm{ord}_{P_0}(f)} \neq 0$. Suppose that the function field of $X$ is generated by $f$ over $\mathbb C$. Then $f$ has a unique simple pole and a unique simple zero, and all the other points have order $0$. (For a reference, see \cite[Proposition 2.11 (3)]{Shi}.) If $P_0$ is not a simple pole of $f$, then $f(z) - f(P_0)$ must have a simple zero at $P_0$ because $\mathbb C(f(z) - f(P_0))$ is also the function field of $X$. Thus we can deduce the following.

\begin{lemma}\label{lemma_c_1}
Suppose that the function field of a compact Riemann surface $X$ of genus zero is generated by $f$ over $\mathbb C$. Let $t(z)$ be a local parameter at a point $P_0 \in X$. If $P_0$ is not a simple pole of $f$, then we have
\[
f(z) \ = \ f(P_0) \ + \ \sum_{m = 1}^\infty c_m t(z)^m
\]
with $c_1 \neq 0$.
\end{lemma}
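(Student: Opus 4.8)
The plan is to reduce the statement to the single structural fact about genus-zero curves already recalled in the paragraph preceding the lemma: a generator $f$ of the function field of a compact Riemann surface $X$ of genus zero defines a biholomorphism $X \cong \mathbb P^1(\mathbb C)$, so $f$ has exactly one pole, which is simple, and exactly one zero, which is simple, and every other point of $X$ is a point of order $0$ for $f$ (see \cite[Proposition 2.11 (3)]{Shi}). The key move is to apply this same fact not to $f$ but to the shifted function $f - f(P_0)$.

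First I would observe that, since $f$ has a unique pole and that pole is simple, the hypothesis ``$P_0$ is not a simple pole of $f$'' forces $P_0$ to be no pole of $f$ at all; hence $f$ is holomorphic at $P_0$ and $f(P_0) \in \mathbb C$ is a genuine complex number, so the function $g := f - f(P_0)$ makes sense. Next I would note that $g$ is non-constant and $\mathbb C(g) = \mathbb C(f) = \mathbb C(X)$, because $g$ differs from $f$ by a constant; thus $g$ also generates the function field of $X$, and applying the structural fact above to $g$ we get that $g$ has a unique zero and that zero is simple.

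It then remains to locate that zero. Since $g(P_0) = f(P_0) - f(P_0) = 0$, the point $P_0$ is a zero of $g$, hence it must be the unique simple zero, i.e. $\mathrm{ord}_{P_0}(g) = 1$. Writing the Taylor expansion of $g$ at $P_0$ in the local parameter $t(z)$ as $g(z) = \sum_{m \geq \mathrm{ord}_{P_0}(g)} c_m t(z)^m$ with $c_{\mathrm{ord}_{P_0}(g)} \neq 0$, we obtain $g(z) = \sum_{m=1}^\infty c_m t(z)^m$ with $c_1 \neq 0$, and adding $f(P_0)$ back yields $f(z) = f(P_0) + \sum_{m=1}^\infty c_m t(z)^m$ with $c_1 \neq 0$, as claimed.

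I do not anticipate any real obstacle: the argument is a two-line bootstrap off \cite[Proposition 2.11 (3)]{Shi}. The only point deserving a moment of care is the first one — that in this genus-zero setting ``not a simple pole'' is the same as ``not a pole'', which is precisely what makes $f(P_0)$ a valid constant and lets us feed $f - f(P_0)$ back into the same proposition.
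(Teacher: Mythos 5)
Your proposal is correct and is essentially identical to the paper's own argument: the paper likewise invokes \cite[Proposition 2.11 (3)]{Shi} to note that a generator of the function field has a unique simple pole and a unique simple zero, and then applies this to $f - f(P_0)$, which still generates $\mathbb C(X)$ and vanishes at $P_0$, forcing a simple zero there. Your only addition is to spell out explicitly why ``not a simple pole'' implies ``not a pole'' (so that $f(P_0)$ is finite), which the paper leaves implicit.
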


Let $\mathrm{M}_{n, N}^{\mathrm{ell}}$ be the subset of $\mathrm{M}_{n, N}$ consisting of all elliptic elements, i.e.
\begin{eqnarray*}
\mathrm{M}_{n, N}^{\mathrm{ell}} & = & \{ \alpha \in \mathrm{M}_{n, N} \, | \ |\mathrm{tr}(\alpha)| < 2 \sqrt{n} \}.
\end{eqnarray*}

\begin{lemma}\label{lemma_j_N}
Let $\tau_0 \in \mathbb H$ be a fixed point of $\alpha \in \mathrm{M}_{n, N}^{\mathrm{ell}} - \sqrt{n} \, \Gamma_0(N)$. Then we have
\[ \lim_{\tau \rightarrow \tau_0} \frac{j_N(\tau) \ - \ j_N(\alpha(\tau))}{j_N(\tau) \ - \ j_N(\tau_0)} \ \neq \ 0, \ \infty. \]
\end{lemma}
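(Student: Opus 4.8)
The plan is to analyze the local behavior of $j_N(\tau) - j_N(\alpha(\tau))$ at the fixed point $\tau_0$ and compare it with that of $j_N(\tau) - j_N(\tau_0)$. Since $\alpha \in \mathrm{M}_{n,N}^{\mathrm{ell}}$ has $|\mathrm{tr}(\alpha)| < 2\sqrt{n}$, it has a unique fixed point $\tau_0$ in $\mathbb{H}$; moreover $\alpha$ fixes $\tau_0$ but, being in $\mathrm{M}_{n,N} - \sqrt{n}\,\Gamma_0(N)$, it does not act as a scalar, so as a holomorphic map on $\mathbb{H}$ near $\tau_0$ we have $\alpha(\tau_0) = \tau_0$ with derivative $\alpha'(\tau_0) = n/(c\tau_0 + d)^2 \neq 0, 1$. (That the multiplier is not $1$ is where the hypothesis $\alpha \notin \sqrt{n}\,\Gamma_0(N)$ enters: equality of the multiplier with $1$ would force $\alpha$ to be scalar.) The point $\tau_0$ is a CM point, hence its image in $X_0(N)$ is either an ordinary point or an elliptic point; in either case we must work with the correct local parameter.

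The key steps, in order, are as follows. First, I would fix a local parameter $t$ at the image $P_0$ of $\tau_0$ on $X_0(N)$: if $\tau_0$ is not elliptic for $\Gamma_0(N)$, take $t(\tau) = \tau - \tau_0$ (up to a holomorphic change of coordinates), and if $\tau_0$ is elliptic of order $e$, take $t(\tau) = \left(\frac{\tau - \tau_0}{\tau - \overline{\tau_0}}\right)^{e}$, which is a genuine local parameter on the quotient. Second, I would expand $j_N$ in this parameter near $P_0$; since $P_0$ is not a pole of $j_N$ (the only pole is the cusp $\infty$), Lemma~\ref{lemma_c_1} applies and gives
\[ j_N(\tau) \ = \ j_N(\tau_0) \ + \ c_1 t(\tau) \ + \ c_2 t(\tau)^2 \ + \ \cdots \qquad (c_1 \neq 0). \]
Consequently $j_N(\tau) - j_N(\tau_0) = c_1 t(\tau)(1 + O(t))$ vanishes to order exactly $1$ in $t$ at $P_0$. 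Third, I would compute the order of vanishing of $j_N(\tau) - j_N(\alpha(\tau))$ at $\tau_0$ in the same parameter. Write $g = j_N \circ \alpha$; then $g$ is holomorphic at $\tau_0$, and $g(\tau_0) = j_N(\alpha(\tau_0)) = j_N(\tau_0)$, so the numerator also vanishes at $\tau_0$. The question is whether it vanishes to order exactly $1$. Using $j_N(\tau) = j_N(\tau_0) + c_1(\tau - \tau_0)^{\kappa} + \cdots$ where $\kappa$ is $1$ or $e$ according to the two cases (i.e. pulling back the $t$-expansion), one finds
\[ j_N(\tau) - j_N(\alpha(\tau)) \ = \ c_1\big((\tau - \tau_0)^{\kappa} - (\alpha(\tau) - \tau_0)^{\kappa}\big) \ + \ (\text{higher order}), \]
and since $\alpha(\tau) - \tau_0 = \lambda(\tau - \tau_0) + O((\tau-\tau_0)^2)$ with $\lambda = \alpha'(\tau_0) \neq 0$, the bracket equals $c_1(1 - \lambda^{\kappa})(\tau - \tau_0)^{\kappa}(1 + O(\tau - \tau_0))$. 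Fourth, I would verify $\lambda^{\kappa} \neq 1$: since $\alpha$ is elliptic with $\lambda \neq 1$, and since $\lambda$ is the multiplier of the automorphism $\alpha$ of $\mathbb{H}$ at its fixed point, $\lambda$ has absolute value $1$; the order $\kappa$ is constrained so that $\lambda^{\kappa}$ is still not $1$ (in the elliptic case one must check that the multiplier of $\alpha$ is not a primitive $e$-th root of unity coming from $\overline{\Gamma}_0(N)_{\tau_0}$, which holds because $\alpha \notin \sqrt{n}\,\Gamma_0(N)$). Hence the numerator also vanishes to order exactly $\kappa$ in $\tau - \tau_0$, equivalently order exactly $1$ in $t$, and therefore
\[ \lim_{\tau \to \tau_0} \frac{j_N(\tau) - j_N(\alpha(\tau))}{j_N(\tau) - j_N(\tau_0)} \ = \ \frac{c_1(1 - \lambda^{\kappa})}{c_1} \ = \ 1 - \lambda^{\kappa} \ \neq \ 0, \infty. \]

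I expect the main obstacle to be the careful bookkeeping at elliptic points: making sure the local parameter on the quotient $X_0(N)$ is used consistently on both numerator and denominator, and proving that $\lambda^{\kappa} \neq 1$ in the elliptic case. The subtlety is that at an elliptic point $\tau_0$ of order $e$, the stabilizer $\overline{\Gamma}_0(N)_{\tau_0}$ contributes $e$-th roots of unity as multipliers, so one must rule out the possibility that $\alpha$ has the same multiplier at $\tau_0$ as one of those elliptic elements — but if it did, composing $\alpha$ with the inverse of that elliptic element of $\Gamma_0(N)$ would give an element of $\mathrm{M}_{n,N}$ fixing $\tau_0$ with multiplier $1$, hence scalar, forcing $\alpha \in \sqrt{n}\,\Gamma_0(N)$, contrary to hypothesis. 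Once this is pinned down, the rest is a routine comparison of leading terms in the common local parameter.
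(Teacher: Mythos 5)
Your proposal is correct, and its skeleton matches the paper's: expand $j_N$ and $j_N\circ\alpha$ near $\tau_0$ using Lemma~\ref{lemma_c_1} with a local parameter of order $e=|\overline{\Gamma}_0(N)_{\tau_0}|$, observe that the limit equals $1-\alpha'(\tau_0)^e$, and reduce everything to showing $\alpha'(\tau_0)^e\neq 1$. Where you genuinely diverge is in that last, crucial step. The paper proves $\alpha'(\tau_0)^e\neq 1$ by a hands-on case analysis: writing $\alpha'(\tau_0)^e=1$ as $c\tau_0+d=\sqrt{n}\,\zeta_{2e}^k$, splitting into $e=1,2,3$ and into $n$ a perfect square or not, and deriving contradictions from the rationality of the real or imaginary part of elliptic points of $\Gamma_0(N)$ (which are $\mathrm{SL}_2(\mathbb Z)$-equivalent to $i$ or $\rho$) versus the irrationality forced by $\sqrt{n}$; only in the perfect-square case does it directly exhibit $\alpha=\sqrt{n}\gamma$. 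Your argument replaces all of this with a single conceptual observation: the multiplier map $\gamma\mapsto\gamma'(\tau_0)$ is an injective homomorphism from $\overline{\Gamma}_0(N)_{\tau_0}$ onto the group $\mu_e$ of $e$-th roots of unity (injective because a nonscalar rational matrix fixing $\tau_0\in\mathbb H$ has $c\neq 0$ and hence multiplier $\det/(c\tau_0+d)^2\neq 1$), so $\alpha'(\tau_0)^e=1$ would force $\alpha'(\tau_0)=\gamma'(\tau_0)$ for some $\gamma$ in the stabilizer, whence $\alpha\gamma^{-1}$ fixes $\tau_0$ with multiplier $1$, is therefore scalar, and $\alpha=\pm\sqrt{n}\,\gamma\in\sqrt{n}\,\Gamma_0(N)$, contradicting the hypothesis. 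This treats all cases uniformly (including $e=1$ and both parities of $n$) and avoids the delicate rationality bookkeeping; the paper's route, by contrast, is more elementary and self-contained but longer. One small point in your favor: by taking $\bigl(\frac{\tau-\tau_0}{\tau-\overline{\tau_0}}\bigr)^e$ as the local parameter at an elliptic point you sidestep the paper's slight imprecision in asserting that the expansion of $j_N$ contains only powers of $(\tau-\tau_0)^e$; in both write-ups only the leading term matters, so this does not affect correctness.
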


\begin{proof}
Set $e = |\overline{\Gamma}_0(N)_{\tau_0}|$. Then $t = (\tau - \tau_0)^e$ is a local parameter at $P_0 := \Gamma_0(N) \tau_0 \in X_0(N)$. According to Lemma \ref{lemma_c_1} we have
\[ j_N(\tau) \ = \ j_N(\tau_0) \ + \ \sum_{m = 1}^\infty c_m (\tau - \tau_0)^{em} \]
with $c_1 \neq 0$. We see that $|\overline{\Gamma'}_{\tau_0}| = |\overline{\Gamma}_0(N)_{\tau_0}| = e$ and hence derive that $t = (\tau - \tau_0)^e$ is also a local parameter at $Q_0 := \Gamma' \tau_0 \in X'$, where $X' = \Gamma' \backslash \mathbb H^*$ and $\Gamma' = \alpha^{-1} \Gamma_0(N) \alpha$. Because $j_N \circ \alpha$ generates the function field $\mathbb C(X')$ of the compact Riemann surface $X'$ over $\mathbb C$, we infer from Lemma \ref{lemma_c_1} again and from $\alpha(\tau_0) = \tau_0$ that
\[ j_N(\alpha(\tau)) \ = \ j_N(\alpha(\tau_0)) \ + \ \sum_{m = 1}^\infty d_m (\tau - \tau_0)^{em} \ = \ j_N(\tau_0) \ + \ \sum_{m = 1}^\infty d_m (\tau - \tau_0)^{em} \]
with $d_1 \neq 0$. Because
\begin{eqnarray*}
\lim_{\tau \rightarrow \tau_0} \frac{j_N(\tau) \ - \ j_N(\alpha(\tau))}{j_N(\tau) \ - \ j_N(\tau_0)} & = & \lim_{\tau \rightarrow \tau_0} \frac{\sum_{m = 1}^\infty (c_m \ - \ d_m) (\tau \ - \ \tau_0)^{em}}{\sum_{m = 1}^\infty c_m (\tau \ - \ \tau_0)^{em}} \\
& = & \frac{c_1 - d_1}{c_1},
\end{eqnarray*}
it is enough to show that $c_1 \neq d_1$.

Substituting $\alpha(\tau)$ for $\tau$ in the first equation of this proof, we get
\begin{eqnarray*}
j_N(\alpha(\tau)) & = & j_N(\tau_0) \ + \ \sum_{m = 1}^\infty c_m (\alpha(\tau) \ - \ \tau_0)^{em} \\
& = & j_N(\tau_0) \ + \ c_1 \left( \frac{\alpha(\tau) \ - \ \alpha(\tau_0)}{\tau \ - \ \tau_0} \right)^e (\tau \ - \ \tau_0)^e \ + \ \sum_{m = 2}^\infty c_m (\alpha(\tau) \ - \ \tau_0)^{em},
\end{eqnarray*}
from which we deduce that
\[ d_1 \ = \ \lim_{\tau \rightarrow \tau_0} \frac{j_N(\alpha(\tau)) \ - \ j_N(\tau_0)}{(\tau \ - \ \tau_0)^e} \ = \ c_1 \alpha'(\tau_0)^e. \]

Now we will show that $\alpha'(\tau_0)^e \neq 1$. Write $\alpha = (\begin{smallmatrix} a & b \\ c & d \end{smallmatrix})$. Since $\alpha'(\tau_0) = n/(c\tau_0 + d)^2$, we infer that
\[ \alpha'(\tau_0)^e = 1 \ \iff \ c\tau_0 + d \ = \ \sqrt{n} \zeta_{2e}^k \qquad \mbox{for some } k = 0, 1, \ldots, 2e-1. \]
Clearly we have $e = 1$, $2$, or $3$. If $e = 1$, then $\sqrt{n} \zeta_{2e}^k$ is real, but $c\tau_0 + d$ is not because $c \neq 0$. So we have $\alpha'(\tau_0) \neq 1$.

From now on, we suppose that $e > 1$. We deal with two cases. Firstly, we consider the case that $n$ is not a perfect square. We assume that $\alpha'(\tau_0)^2 = 1$. Then $c\tau_0 + d = \sqrt{n} i^k$ for some $k \in \mathbb Z$. If $k = 0$ or $2$, then we have a contradiction as above. If $k = 1$ or $3$, then we have $\tau_0 = (-d \pm \sqrt{n} i)/c$, whose imaginary part is irrational because $n$ is not a perfect square. Since every elliptic point of $\Gamma_0(N)$ of order $2$ must be equivalent to $i$ under $\mathrm{SL}_2(\mathbb Z)$, we obtain that $\tau_0 = \gamma (i)$ for some $\gamma \in \mathrm{SL}_2(\mathbb Z)$, from which we infer that the imaginary part of $\tau_0$ must be rational. Thus we have $\alpha'(\tau_0)^2 \neq 1$. Now we assume that $\alpha'(\tau_0)^3 = 1$. Since the real part of $\sqrt{n} \zeta_{6}^k$ is irrational, so is the real part of $\tau_0$. However, we have $\tau_0 = \gamma(\frac{-1 + \sqrt{3}i}{2})$ for some $\gamma \in \mathrm{SL}_2(\mathbb Z)$, whose real part is easily computed to be rational.

Finally, we consider the case that $n$ is a perfect square. Assume that $\alpha'(\tau_0)^e = 1$, i.e. that $c\tau_0 + d = \sqrt{n} \zeta_{2e}^k$ for some $k = 0, 1, \ldots, 2e-1$. Without loss of generality, we may assume that $c > 0$. From $\alpha(\tau_0) = \tau_0$, we have $\tau_0 = \frac{a - d + \sqrt{(a+d)^2 - 4n}}{2c}$. On the other hand, since $|\overline{\Gamma}_0(N)_{\tau_0}| = e > 1$, there is an elliptic element $\gamma = (\begin{smallmatrix} A & B \\ C & D \end{smallmatrix}) \in \Gamma_0(N)$ such that $C > 0$ and $\gamma(\tau_0) = \tau_0$, so we have $\tau_0 = \frac{A-D + \sqrt{(A+D)^2 - 4}}{2C}$. Comparing these with $\tau_0 = \frac{-d + \sqrt{n}\zeta_{2e}^k}{c}$, we derive that $k = 1$, $a + d = 0$, $A + D = 0$, $c = \sqrt{n} C$, $d = \sqrt{n} D$, and $a = \sqrt{n} A$ if $e = 2$ and that $k = 1, 2$, $a + d = (-1)^{k-1} \sqrt{n}$, $A + D = (-1)^{k-1}$, $c = \sqrt{n} C$, $d = \sqrt{n} D$, and $a = \sqrt{n} A$ if $e = 3$. In either case, combining with $ad - bc = n$ and $AD - BC = 1$, we deduce that $b = \sqrt{n}B$ and hence $\alpha = \sqrt{n} \gamma \in \sqrt{n} \, \Gamma_0(N)$. This leads us to a contradiction.
\end{proof}

The unique fixed point of $\alpha \in \mathrm{M}_{n, N}^{\mathrm{ell}}$ on $\mathbb H$ is denoted by $\tau_\alpha$. We also use the main involution of $\mathrm{M}_2(\mathbb Z)$ defined by $\alpha^{\iota} = (\begin{smallmatrix} d & -b \\ -c & a \end{smallmatrix})$ for $\alpha = (\begin{smallmatrix} a & b \\ c & d \end{smallmatrix})$.

\begin{lemma}\label{lemma_q}
Assume that $(n, N) = 1$. For any matrix $\alpha = (\begin{smallmatrix} a & b \\ c & d \end{smallmatrix})$ we denote by $q(\alpha)$ the quadratic form $\mathrm{sgn}(c) \cdot (cx^2 + (d - a)xy - by^2)$. Then $q:\mathrm{M}_{n, N}^{\mathrm{ell}} \longrightarrow \bigcup_{|r| < 2\sqrt{n}} \mathrm{Q}_{r^2 - 4n, N}$ is a surjective map such that

(1) $q\big( \big( \begin{smallmatrix} \frac{-b + r}{2} & -c \\ a & \frac{b + r}{2} \end{smallmatrix} \big) \big) = ax^2 + bxy + cy^2$

(2) $q(\alpha) = q(\alpha^{\iota})$

(3) $\tau_\alpha = \tau_{q(\alpha)}$

(4) $\tau_{\gamma^{-1} \alpha \gamma} = \gamma^{-1}(\tau_\alpha) = \tau_{q(\alpha) \cdot \gamma}$

(5) $q(\gamma^{-1} \alpha \gamma) = q(\alpha) \cdot \gamma$

(6) $q(\alpha) = q(\beta)$ $\iff$ $\beta = \pm \alpha$ or $\beta = \pm \alpha^{\iota}$

(7) $\alpha \neq \alpha^{\iota}$

(8) $\alpha = - \alpha^{\iota}$ $\iff$ $\mathrm{tr}(\alpha) = 0$\\
for any $ax^2 + bxy + cy^2 \in \mathrm{Q}_{r^2 - 4n, N}$, $\alpha, \beta \in \mathrm{M}_{n, N}^{\mathrm{ell}}$, and $\gamma \in \Gamma_0(N)$.
\end{lemma}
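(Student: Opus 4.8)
The plan is to build an explicit right inverse to $q$ and then read off all eight items (and surjectivity) from it. Given $Q = ax^2 + bxy + cy^2 \in \mathrm{Q}_{r^2-4n, N}$ with $|r| < 2\sqrt n$, note $b \equiv r \pmod 2$ (since $b^2 \equiv r^2 \pmod 4$), so
\[
\beta_{Q, r} \ := \ \Big( \begin{smallmatrix} (r-b)/2 & -c \\ a & (r+b)/2 \end{smallmatrix} \Big)
\]
has integer entries. A short computation gives $\det \beta_{Q,r} = \tfrac{r^2-b^2}{4} + ac = n$ and $\mathrm{tr}\,\beta_{Q,r} = r$; the lower-left entry $a$ is divisible by $N$ because $Q \in \mathrm{Q}_{r^2-4n,N}$; and the upper-left entry $(r-b)/2$ is prime to $N$ because its product with $(r+b)/2$ is $\equiv n \pmod N$ and $(n,N)=1$. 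Hence $\beta_{Q,r} \in \mathrm{M}_{n,N}^{\mathrm{ell}}$, and expanding the definition of $q$ gives $q(\beta_{Q,r}) = \mathrm{sgn}(a)(ax^2+bxy+cy^2) = Q$ since $a>0$. This is exactly (1), and since every element of the target union lies in some $\mathrm{Q}_{r^2-4n,N}$ with $|r|<2\sqrt n$, it also proves $q$ is surjective. One must also check $q$ is well defined into that union: for $\alpha = (\begin{smallmatrix} a & b \\ c & d\end{smallmatrix}) \in \mathrm{M}_{n,N}^{\mathrm{ell}}$ one has $c \neq 0$ — else $ad = n$ with $|a+d| < 2\sqrt n$, impossible by AM--GM — so $\mathrm{sgn}(c)(cx^2+(d-a)xy-by^2)$ has positive leading coefficient $|c|$, discriminant $(a+d)^2 - 4(ad-bc) = \mathrm{tr}(\alpha)^2 - 4n < 0$, and leading coefficient divisible by $N$, i.e. it lies in $\mathrm{Q}_{\mathrm{tr}(\alpha)^2-4n, N}$.

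Items (2), (3), (7), (8) are then direct. Substituting $(\begin{smallmatrix} d & -b \\ -c & a\end{smallmatrix})$ and $(\begin{smallmatrix} -a & -b \\ -c & -d\end{smallmatrix})$ into the definition of $q$ shows $q(\alpha^\iota) = q(\alpha)$ and $q(-\alpha) = q(\alpha)$; the fixed-point equation $\alpha(\tau) = \tau$ is literally $c\tau^2 + (d-a)\tau - b = 0$, i.e. $q(\alpha)(\tau,1) = 0$, which (as $q(\alpha)$ is positive definite) forces $\tau_\alpha = \tau_{q(\alpha)}$; $\alpha = \alpha^\iota$ would give $b = c = 0$, contradicting $c \neq 0$; and $\alpha = -\alpha^\iota \iff a + d = 0 \iff \mathrm{tr}(\alpha) = 0$. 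For (4), first note $\mathrm{M}_{n,N}^{\mathrm{ell}}$ is stable under $\alpha \mapsto \gamma^{-1}\alpha\gamma$ for $\gamma \in \Gamma_0(N)$: conjugation preserves trace and determinant, and modulo $N$ both $\alpha$ and $\gamma$ are upper triangular, so the lower-left entry of $\gamma^{-1}\alpha\gamma$ stays $\equiv 0 \pmod N$ while its upper-left entry stays $\equiv a$, hence prime to $N$. Then $\gamma^{-1}(\tau_\alpha)$ is a point of $\mathbb H$ fixed by $\gamma^{-1}\alpha\gamma$, so it equals $\tau_{\gamma^{-1}\alpha\gamma}$; and $\gamma^{-1}(\tau_\alpha) = \gamma^{-1}(\tau_{q(\alpha)}) = \tau_{q(\alpha)\cdot\gamma}$ using (3) and the identity $\tau_{Q\cdot\gamma} = \gamma^{-1}(\tau_Q)$ of Section 3. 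The same conjugation-stability argument also shows $\mathrm{M}_{n,N}^{\mathrm{ell}}$ is closed under $\iota$ and under $\alpha \mapsto -\alpha$, which is what makes (2) and (6) meaningful; closure under $\iota$ is where $(n,N)=1$ enters again.

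Property (5) now follows formally: $q(\gamma^{-1}\alpha\gamma)$ and $q(\alpha)\cdot\gamma$ both lie in $\mathrm{Q}_{r^2-4n,N}$ with $r = \mathrm{tr}(\alpha)$ (the $\Gamma_0(N)$-action preserves the discriminant, and $\mathrm{tr}(\gamma^{-1}\alpha\gamma) = r$), and by (3)--(4) they have the same root $\gamma^{-1}(\tau_\alpha)$ on $\mathbb H$; since a positive definite binary form is determined by its discriminant together with its root on $\mathbb H$, the two forms agree. For (6), ``$\Leftarrow$'' is covered by $q(\pm\alpha) = q(\alpha) = q(\pm\alpha^\iota)$ noted above. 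For ``$\Rightarrow$'', suppose $q(\alpha) = q(\beta) =: Q = ax^2 + bxy + cy^2$. Matching coefficient vectors shows that $q(\alpha) = Q$ together with a prescribed value of $\mathrm{tr}(\alpha)$ forces $\alpha \in \{\beta_{Q,\mathrm{tr}(\alpha)},\, \beta_{Q,\mathrm{tr}(\alpha)}^\iota\}$, the two options corresponding to the two sign choices $\pm a$ of the lower-left entry. From $\mathrm{disc}(Q) = \mathrm{tr}(\alpha)^2 - 4n = \mathrm{tr}(\beta)^2 - 4n$ we get $\mathrm{tr}(\beta) = \pm\mathrm{tr}(\alpha)$; replacing $\beta$ by $-\beta$ if needed (this negates the trace and leaves $q$ unchanged) we may assume $\mathrm{tr}(\beta) = \mathrm{tr}(\alpha)$, and then $\beta \in \{\beta_{Q,\mathrm{tr}(\alpha)}, \beta_{Q,\mathrm{tr}(\alpha)}^\iota\} = \{\alpha, \alpha^\iota\}$ (using (7) to see the two members are distinct), so $\beta = \pm\alpha$ or $\beta = \pm\alpha^\iota$ in the original situation.

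The only genuinely delicate part is the bookkeeping in (6): one must be sure that ``$q(\alpha) = Q$ with $\mathrm{tr}(\alpha)$ prescribed'' has precisely the two solutions $\beta_{Q,r}, \beta_{Q,r}^\iota$ and no others, and that the passage to $-\beta$ meshes correctly with the involution. Everything else is routine matrix algebra; but it is worth keeping an eye on where the hypothesis $(n,N)=1$ is used — it is exactly what guarantees that $\beta_{Q,r}$, $\alpha^\iota$ and $\gamma^{-1}\alpha\gamma$ all stay inside $\mathrm{M}_{n,N}$, via coprimality of their upper-left entries to $N$.
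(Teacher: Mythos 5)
Your proof is correct and is exactly the direct verification that the paper declines to write out (its ``proof'' consists of the single remark that everything is a tedious computation, with $(n,N)=1$ needed only to keep $\beta_{Q,r}$ and $\alpha^\iota$ inside $\mathrm{M}_{n,N}^{\mathrm{ell}}$ --- precisely the two places you flag). Your supplied details check out, including the determination of a positive definite form by its discriminant and root in item (5) and the two-solution bookkeeping in item (6).
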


\begin{proof}
It is nothing but a tedious computation to verify all of the properties above, so we just remark that the assumption that $(n, N) = 1$ is used only where to verify that $\big( \begin{smallmatrix} (-b + r)/2 & -c \\ a & (b + r)/2 \end{smallmatrix} \big)$ and $\alpha^\iota$ are contained in $\mathrm{dom}(q) = \mathrm{M}_{n, N}^{\mathrm{ell}}$.
\end{proof}

We fix a fundamental region $R$ for $\Gamma_0(N)$ such that the map of $R$ to $Y_0(N) := \Gamma_0(N) \backslash \mathbb H$ given by $\tau \mapsto \Gamma_0(N) \tau$ is a bijection. Let $\overline{\mathrm{M}}_{n, N} = \mathrm{M}_{n, N}/ \{ \pm 1 \}$, $\overline{\Gamma}_0(N) = \Gamma_0(N)/ \{ \pm 1 \}$, $\overline{\mathrm{M}}_{n, N}^{\mathrm{ell}} = \mathrm{M}_{n, N}^{\mathrm{ell}}/ \{ \pm 1 \}$, and $\overline{\mathrm{M}}_{n, N}^{\mathrm{ell}}(R) = \{ \bar{\alpha} \in \overline{\mathrm{M}}_{n, N}^{\mathrm{ell}} \, | \, \tau_\alpha \in R \}$.

\begin{lemma}\label{lemma_qbar}
Assume that $(n, N) = 1$. Then the map
\[ \bar{q}: \overline{\mathrm{M}}_{n, N}^{\mathrm{ell}}(R) \ \longrightarrow \ \{ (r, \, [Q]) \, | \, r \in \mathbb Z, \ |r| < 2\sqrt{n}, \ [Q] \in \mathrm{Q}_{r^2 - 4n, N} / \Gamma_0(N) \} \]
given by $\bar{\alpha} \longmapsto (\mathrm{sgn}(c_\alpha) \cdot \mathrm{tr}(\alpha), \, [q(\alpha)])$ is a bijection. Here, $c_\alpha$ denotes the $(2, 1)$-entry of $\alpha$.
\end{lemma}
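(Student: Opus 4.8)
The plan is to build a two-sided inverse of $\bar q$ out of the properties recorded in Lemma~\ref{lemma_q}, the bulk of the work being to keep track of the $\pm\iota$ sign ambiguity and of the extra symmetries at elliptic fixed points. (Throughout I will use that $\mathrm M_{n,N}^{\mathrm{ell}}$ is stable under conjugation by $\Gamma_0(N)$ and under the involution $\iota$, which is already built into the statement of Lemma~\ref{lemma_q}.) First I would check that $\bar q$ takes values in the stated set and is independent of the chosen sign of the lift of $\bar\alpha$: for $\alpha\in\mathrm M_{n,N}^{\mathrm{ell}}$ one has $\mathrm{tr}(\alpha)^2-4n=\mathrm{tr}(\alpha)^2-4\det(\alpha)<0$, so $|\mathrm{tr}(\alpha)|<2\sqrt n$ and $q(\alpha)\in\mathrm Q_{\mathrm{tr}(\alpha)^2-4n,N}$ by Lemma~\ref{lemma_q}; moreover the $(2,1)$-entry $c_\alpha$ is nonzero (else $\mathrm{tr}(\alpha)^2-4n=(a-d)^2\ge 0$), so $\mathrm{sgn}(c_\alpha)\in\{\pm1\}$, and passing from $\alpha$ to $-\alpha$ negates $c_\alpha$ and $\mathrm{tr}(\alpha)$ while fixing $q(\alpha)$.

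For surjectivity, fix $(r,[Q])$ with $|r|<2\sqrt n$ and a representative $Q=ax^2+bxy+cy^2\in\mathrm Q_{r^2-4n,N}$, and set $\alpha=\big(\begin{smallmatrix} \frac{-b+r}{2} & -c \\ a & \frac{b+r}{2} \end{smallmatrix}\big)$. By Lemma~\ref{lemma_q}(1) --- the one place where $(n,N)=1$ is used --- we have $\alpha\in\mathrm M_{n,N}^{\mathrm{ell}}$, $q(\alpha)=Q$, and $\mathrm{tr}(\alpha)=r$, and by Lemma~\ref{lemma_q}(3), $\tau_\alpha=\tau_Q\in\mathbb H$. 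Choose $\gamma\in\Gamma_0(N)$ with $\gamma^{-1}(\tau_\alpha)\in R$ and put $\alpha'=\gamma^{-1}\alpha\gamma$; then $\tau_{\alpha'}=\gamma^{-1}(\tau_\alpha)\in R$ by Lemma~\ref{lemma_q}(4), $\mathrm{tr}(\alpha')=r$, and $q(\alpha')=Q\cdot\gamma$ by Lemma~\ref{lemma_q}(5), so $[q(\alpha')]=[Q]$. If $c_{\alpha'}>0$ then $\bar q(\overline{\alpha'})=(r,[Q])$; if $c_{\alpha'}<0$ I would replace $\alpha'$ by $(\alpha')^{\iota}$, which by Lemma~\ref{lemma_q}(2),(3) has the same fixed point and $q$-value but $(2,1)$-entry $-c_{\alpha'}>0$, so $\bar q(\overline{(\alpha')^{\iota}})=(r,[Q])$.

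For injectivity, suppose $\bar q(\bar\alpha)=\bar q(\bar\beta)$ with $\bar\alpha,\bar\beta\in\overline{\mathrm M}_{n,N}^{\mathrm{ell}}(R)$. From $[q(\alpha)]=[q(\beta)]$ we get $q(\beta)=q(\alpha)\cdot\gamma=q(\gamma^{-1}\alpha\gamma)$ for some $\gamma\in\Gamma_0(N)$ by Lemma~\ref{lemma_q}(5), so Lemma~\ref{lemma_q}(6) forces $\beta=\pm\gamma^{-1}\alpha\gamma$ or $\beta=\pm(\gamma^{-1}\alpha\gamma)^{\iota}$; in either case Lemma~\ref{lemma_q}(2),(3),(4) give $\tau_\beta=\tau_{\gamma^{-1}\alpha\gamma}=\gamma^{-1}(\tau_\alpha)$. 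Since $\tau_\alpha,\tau_\beta\in R$ and $R\to Y_0(N)$ is injective, $\tau_\beta=\tau_\alpha$ and $\gamma$ fixes $\tau_\alpha$. Now comes the key point: $\alpha$ and $\gamma$ both lie in $\mathrm{GL}_2^+(\mathbb R)$ and fix $\tau_\alpha\in\mathbb H$, and the stabilizer of a point of $\mathbb H$ in $\mathrm{GL}_2^+(\mathbb R)$ is abelian, so $\gamma$ and $\alpha$ commute and $\gamma^{-1}\alpha\gamma=\alpha$. Hence $\beta=\pm\alpha$ or $\beta=\pm\alpha^{\iota}$; in the first case $\bar\alpha=\bar\beta$ and we are done. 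In the second case, if $\mathrm{tr}(\alpha)\ne 0$ then $\mathrm{sgn}(c_{\pm\alpha^{\iota}})\,\mathrm{tr}(\pm\alpha^{\iota})=-\mathrm{sgn}(c_\alpha)\,\mathrm{tr}(\alpha)\ne\mathrm{sgn}(c_\alpha)\,\mathrm{tr}(\alpha)$, contradicting $\bar q(\bar\alpha)=\bar q(\bar\beta)$; and if $\mathrm{tr}(\alpha)=0$ then $\alpha=-\alpha^{\iota}$ by Lemma~\ref{lemma_q}(8), so $\pm\alpha^{\iota}=\mp\alpha$ and again $\bar\alpha=\bar\beta$.

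I expect the main obstacle to be exactly this elliptic-point bookkeeping inside the injectivity step: ruling out that conjugating $\alpha$ by an element of $\Gamma_0(N)$ fixing $\tau_\alpha$ could produce a genuinely different preimage with the same image under $\bar q$. The observation that point stabilizers in $\mathrm{GL}_2^+(\mathbb R)$ are abelian dissolves it, after which only the harmless $\pm\iota$ alternative is left to exclude --- which is precisely what the auxiliary coordinate $\mathrm{sgn}(c_\alpha)\mathrm{tr}(\alpha)$ is there to detect.
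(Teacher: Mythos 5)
Your proof is correct and follows essentially the same route as the paper's: surjectivity from Lemma \ref{lemma_q}(1) after conjugating into $R$, and injectivity via Lemma \ref{lemma_q}(5),(6), the coincidence of the fixed points in $R$, the commutativity of elements fixing a common point of $\mathbb H$ (the paper cites Shimura for this), and the coordinate $\mathrm{sgn}(c_\alpha)\mathrm{tr}(\alpha)$ to dispose of the $\pm\iota$ ambiguity. The only (harmless) cosmetic difference is your explicit $\iota$-correction in the surjectivity step, which in fact is never triggered because $c_{\gamma^{-1}\alpha\gamma}=\mathrm{sgn}(c_\alpha)\,q(\alpha)(a_\gamma,c_\gamma)$ with $q(\alpha)$ positive definite, so conjugation preserves the sign of the $(2,1)$-entry.
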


\begin{proof}
First of all, $\bar{q}$ is well defined clearly and surjective by Lemma \ref{lemma_q} (1). Suppose that $\bar{q}(\bar{\alpha}) = \bar{q}(\bar{\beta})$. Then we have
\begin{eqnarray*}
q(\alpha) \sim q(\beta) & \Longrightarrow & q(\alpha) = q(\beta) \cdot \gamma \qquad \mbox{for some } \gamma \in \Gamma_0(N) \\
& \Longrightarrow & q(\alpha) = q(\gamma^{-1} \beta \gamma) \qquad \mbox{for some } \gamma \in \Gamma_0(N) \\
& \Longrightarrow & \gamma^{-1} \beta \gamma = \pm \alpha, \ \pm \alpha^\iota \qquad \mbox{for some } \gamma \in \Gamma_0(N)
\end{eqnarray*}
by (5) and (6) of Lemma \ref{lemma_q}. Since $(\gamma^{-1} \beta \gamma)(\tau_\alpha) = \tau_\alpha$, we see that $\gamma(\tau_\alpha) = \tau_\beta$. However $\tau_\alpha, \tau_\beta \in R$ and $\gamma(\tau_\alpha) = \tau_\beta$ imply that $\tau_\beta = \tau_\alpha$. Since any two elliptic elements having the same fixed point on $\mathbb H$ commute each other (e.g. see \cite[Proposition 1.16]{Shi}), we have $\gamma^{-1} \beta \gamma = \beta$ and hence $\bar{\beta} = \bar{\alpha}$, $\overline{\alpha^\iota}$. Note that
\[ \bar{\alpha} = \overline{\alpha^\iota} \ \Longleftrightarrow \ \alpha = - \alpha^\iota \ \Longleftrightarrow \ \mathrm{tr}(\alpha) = 0 \]
by (7) and (8) of Lemma \ref{lemma_q} and that
\[ \mathrm{sgn}(c_\beta) \cdot \mathrm{tr}(\beta) \ = \ \mathrm{sgn}(c_\alpha) \cdot \mathrm{tr}(\alpha) \ = \ -\mathrm{sgn}(c_{\alpha^\iota}) \cdot \mathrm{tr}(\alpha^\iota). \]
Therefore we have $\bar{\beta} = \bar{\alpha}$.
\end{proof}

Instead of dealing with an arbitrary set $S$ in Theorem \ref{theorem_main1}, it sufficies to deal with a specific set $S'$ of all inequivalent cusps of $\Gamma_0(N)$. According to \cite[Proof of Proposition 1.43]{Shi} or \cite[Corollary 4 (1)]{CKP1}, $S'$ may be chosen as a set of all nonnegative rational numbers $u/v \in \mathbb Q$ satisfying the properties that

(1) $v | N$, $0 \leq u < N$, $(u, N) = 1$

(2) $u = u' \, $ if $\, u/v, \, u'/v \in S' \, $ and $\, u \equiv u' \pmod {(v, N/v)}$.

\begin{lemma}\label{lemma_cusp}
Assume that $(n, N) = 1$. For any cusp $u/v \in S'$ we have
\[ \alpha_{a, b}(u/v) \ \sim \ u/v \quad \mbox{under } \Gamma_0(N) \ \iff \ n \ \equiv \ (au + bv, n/a)^2 \pmod{(v, N/v)}. \]
\end{lemma}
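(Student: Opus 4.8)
The plan is to reduce the statement to the classical description of $\Gamma_0(N)$-equivalence of cusps. First I would compute the image cusp explicitly: since $\alpha_{a,b}=\left(\begin{smallmatrix} a & b \\ 0 & n/a \end{smallmatrix}\right)$ sends the column vector $\binom{u}{v}$ to $\binom{au+bv}{(n/a)v}$, and since $(a,N)=1$, $v\mid N$ and $(u,v)=1$ force $(au+bv,\,v)=1$ while $(n,N)=1$ forces $(v,\,n/a)=1$, the fraction $\alpha_{a,b}(u/v)$ in lowest terms is $p/q$, where
\[
g \ := \ \big(au+bv,\ n/a\big), \qquad h \ := \ (n/a)/g, \qquad p \ = \ (au+bv)/g, \qquad q \ = \ hv .
\]
In particular $(p,v)=1$, and $(h,N)=1$ gives $(q,N)=v$; thus the ``denominator part'' of $\alpha_{a,b}(u/v)$ already matches that of $u/v$, which is the first half of any cusp-equivalence criterion.

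Next I would invoke the standard criterion for $u/v\sim p/q$ under $\Gamma_0(N)$ (see e.g.\ \cite{Shi, CKP1}) in one-variable form: choosing $\sigma_1,\sigma_2\in\SL_2(\mathbb Z)$ with first columns $\binom{u}{v}$ and $\binom{p}{q}$, one has $u/v\sim p/q$ if and only if the lower-left entry of $\sigma_2\left(\begin{smallmatrix}1&m\\0&1\end{smallmatrix}\right)\sigma_1^{-1}$ is divisible by $N$ for some $m\in\mathbb Z$. Writing $\delta_1,\delta_2$ for the lower-right entries of $\sigma_1,\sigma_2$, that entry equals $q\delta_1-v\delta_2-mvq$, so the condition is the solvability in $m$ of $mvq\equiv q\delta_1-v\delta_2\pmod N$, i.e.\ $(vq,N)\mid q\delta_1-v\delta_2$. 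Since $q=hv$ and $(h,N)=1$, one has $(vq,N)=(v^2,N)=v\,(v,N/v)$ and $q\delta_1-v\delta_2=v(h\delta_1-\delta_2)$, so the condition collapses to
\[
(v,\,N/v)\ \mid\ h\,\delta_1\ -\ \delta_2 .
\]

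To finish I would evaluate $\delta_1,\delta_2$ modulo $(v,N/v)$. From $\det\sigma_1=1$ one gets $u\delta_1\equiv1\pmod v$, hence $\delta_1\equiv u^{-1}$; from $\det\sigma_2=1$ together with $v\mid q$ one gets $\delta_2\equiv p^{-1}$; and $pg=au+bv\equiv au\pmod v$ with $(g,v)=1$ gives $p\equiv aug^{-1}$, hence $p^{-1}\equiv a^{-1}u^{-1}g$ --- all congruences read mod $(v,N/v)$, where $a$, $u$, $g$ are units because $(a,N)=(u,N)=1$ and $g\mid n$ with $(n,N)=1$. Substituting and cancelling the unit $u^{-1}$ turns the divisibility into $h\equiv a^{-1}g$; multiplying by the unit $a$ and using $ah=n/g$ turns it into $n/g\equiv g$; and since $g$ is a unit mod $(v,N/v)$, so that $n/g\equiv ng^{-1}$, it becomes $n\equiv g^2\pmod{(v,N/v)}$, which is the assertion. (When $u/v$ represents the cusp $\infty$ one necessarily has $v=N$ and $(v,N/v)=1$, so both sides hold automatically, in accordance with $\alpha_{a,b}(\infty)=\infty$.)

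The main obstacle is not a single deep step but the arithmetic bookkeeping: rewriting the cusp-equivalence criterion as a single linear congruence in one unknown, keeping track of the several greatest common divisors involved --- above all $(au+bv,v)=1$ and $(q,N)=v$ --- and checking at each cancellation that the element being inverted really is a unit modulo $(v,N/v)$. That last point is what makes every step reversible, so that one obtains the stated equivalence rather than merely one implication.
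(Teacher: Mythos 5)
Your proof is correct and follows essentially the same route as the paper's: both compute $\alpha_{a,b}(u/v)$ in lowest terms as $\frac{(au+bv)/d}{(n/a)v/d}$ with $d=(au+bv,n/a)$ and then apply the standard $\Gamma_0(N)$-cusp-equivalence criterion from the same references, reducing everything to a divisibility by $(v,N/v)$ via the coprimality of $a$, $u$, and the divisors of $n$ to $N$. The only difference is cosmetic: you phrase the criterion through scaling matrices $\sigma_1,\sigma_2$ and a single linear congruence in $m$, whereas the paper uses the two-congruence form with a parameter $t\in(\mathbb Z/N\mathbb Z)^\times$; the final gcd bookkeeping is the same.
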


\begin{proof}
Let $d = (au + bv, n/a)$. Then we compute
\[ \alpha_{a, b}(u/v) \ = \ \frac{(au + bv)/d}{nv/ad}, \qquad \big( (au + bv)/d, \ nv/ad \big) \ = \ 1. \]
Appealing to \cite[Proof of Proposition 1.43]{Shi} or \cite[Corollary 4 (1)]{CKP1} again, we have
\begin{eqnarray*}
& & \alpha_{a, b}(u/v) \ \sim \ u/v \quad \mbox{under } \Gamma_0(N) \\
& \iff & \exists \, t \in (\mathbb Z / N\mathbb Z)^\times, \ x \in \mathbb Z \ \mbox{ such that } \\
& & \qquad (au + bv)/d \ \equiv \ t^{-1}u + xv \pmod N, \quad nv/ad \ \equiv \ tv \pmod N \\
& \iff & \exists \, t \in (\mathbb Z / N\mathbb Z)^\times, \ x \in \mathbb Z \ \mbox{ such that } \\
& & \qquad (au + bv)t/d \ \equiv \ u + nvx/ad \pmod N, \quad t \ \equiv \ n/ad \pmod {N/v} \\
& \iff & \exists \, t \in \mathbb Z / N\mathbb Z, \ x \in \mathbb Z \ \mbox{ such that } \\
& & \qquad (au + bv)t/d \ \equiv \ u + nvx/ad \pmod N, \quad t \ \equiv \ n/ad \pmod {N/v},
\end{eqnarray*}
because the first congruence equation of the last equivalent statement implies that $(t, v) = 1$ and the second congruence equation implies that $(t, N/v) = 1$. Thus we obtain that
\begin{eqnarray*}
& & \alpha_{a, b}(u/v) \ \sim \ u/v \quad \mbox{under } \Gamma_0(N) \\
& \iff & \exists \, x, y, z \in \mathbb Z \ \mbox{ such that } \, (au + bv)/d \cdot (n/ad + Ny/v) \ = \ u + nvx/ad + Nz \\
& \iff & (nv/ad, (au + bv)N/dv, N) \, \big| \, u - (au + bv)n/ad^2 \\
& \iff & (v, N/v) \, \big| \, d^2 - n.
\end{eqnarray*}
\end{proof}

Now we are ready to prove one of our main theorems.

\begin{proof}[Proof of Theorem \ref{theorem_main1}]
We prove by computing the divisor of the function $\Phi_n^{\scalebox{0.6}{$\Gamma_0(N)$}}(j_N, j_N)$ on the compact Riemann surface $X_0(N)$ of genus zero. Since $\Phi_n^{\scalebox{0.6}{$\Gamma_0(N)$}}(j_N, j_N) \in \mathbb Z[j_N]$ by Lemma \ref{lemma_Phi} (2), it is holomorphic everywhere except the cusp $\infty$ of $X_0(N)$. Keeping this in mind, we show the following.

\begin{claim}
Both of
\[ \Phi_n^{\scalebox{0.6}{$\Gamma_0(N)$}}(j_N(\tau), \, j_N(\tau)) \qquad \mbox{and} \qquad \prod_{r \in \mathbb Z \atop |r| < 2 \sqrt{n}} H_{r^2 - 4n, N}(j_N(\tau)) \]
have the same divisor on $Y_0(N)$.
\end{claim}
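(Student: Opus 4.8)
The plan is to compute the divisor of $\Phi_n^{\scalebox{0.6}{$\Gamma_0(N)$}}(j_N(\tau), j_N(\tau))$ on $Y_0(N)$ directly from the factorization $\Phi_n^{\scalebox{0.6}{$\Gamma_0(N)$}}(X, j_N) = \prod_{[\alpha]} (X - j_N(\alpha\tau))$ and to match it term-by-term against the divisor of $\prod_{|r| < 2\sqrt n} H_{r^2 - 4n, N}(j_N(\tau))$. Setting $X = j_N(\tau)$, we get
\[
\Phi_n^{\scalebox{0.6}{$\Gamma_0(N)$}}(j_N(\tau), j_N(\tau)) \ = \ \prod_{[\alpha] \in \Gamma_0(N) \backslash \mathrm{M}_{n, N}} \big(j_N(\tau) - j_N(\alpha\tau)\big),
\]
so on $Y_0(N)$ (away from cusps) the divisor is the sum of the divisors of the individual factors $g_\alpha(\tau) := j_N(\tau) - j_N(\alpha\tau)$. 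The zeros of $g_\alpha$ on $\mathbb H$ occur exactly where $j_N(\tau) = j_N(\alpha\tau)$, i.e.\ where $\tau$ and $\alpha\tau$ are $\Gamma_0(N)$-equivalent; writing $\alpha\tau = \gamma\tau$ with $\gamma \in \Gamma_0(N)$, this says $\gamma^{-1}\alpha$ fixes $\tau$, which forces $\gamma^{-1}\alpha$ to be elliptic (since a non-elliptic element of $\mathrm{M}_{n,N}$ has no fixed point in $\mathbb H$, and the scalar case $\gamma^{-1}\alpha = \sqrt n \cdot \mathrm{id}$ is impossible when $n$ is not a perfect square). Thus the zeros of $\Phi_n^{\scalebox{0.6}{$\Gamma_0(N)$}}(j_N, j_N)$ on $Y_0(N)$ are precisely the images of the fixed points $\tau_\beta$ of elliptic elements $\beta \in \mathrm{M}_{n,N}^{\mathrm{ell}}$ (up to the replacement $\beta \mapsto \gamma\beta$, $\gamma \in \Gamma_0(N)$, which does not change the $\Gamma_0(N)$-orbit of the fixed point).

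Next I would organize the right-hand side. By definition $H_{r^2-4n, N}(X) = \prod_{[Q]} (X - j_N(\tau_Q))^{1/\omega_{Q,N}}$, so
\[
\mathrm{div}\bigg(\prod_{|r| < 2\sqrt n} H_{r^2-4n, N}(j_N(\tau))\bigg) \ = \ \sum_{|r| < 2\sqrt n} \sum_{[Q] \in \mathrm{Q}_{r^2-4n, N}/\Gamma_0(N)} \frac{1}{\omega_{Q,N}} \, \mathrm{div}\big(j_N(\tau) - j_N(\tau_Q)\big)
\]
on $Y_0(N)$. Here the key point is that $j_N(\tau) - j_N(\tau_Q)$, being a generator of $\mathbb C(X_0(N))$ translated by a constant, has a simple zero at the point $\Gamma_0(N)\tau_Q$ when $\tau_Q$ is \emph{not} a simple pole — but since $j_N$ has its only pole at $\infty$ and $\tau_Q \in \mathbb H$, this zero is always simple in the uniformizer $t = (\tau - \tau_Q)^{\omega_{Q,N}}$, hence has order $\omega_{Q,N}$ in the variable $\tau$; multiplied by the weight $1/\omega_{Q,N}$ this contributes exactly $1$ to the divisor at $\Gamma_0(N)\tau_Q$. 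So the right-hand divisor is simply $\sum (\Gamma_0(N)\tau_Q)$, one copy per class $[Q] \in \bigsqcup_{|r|<2\sqrt n} \mathrm{Q}_{r^2-4n,N}/\Gamma_0(N)$.

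The core of the argument is then a bijective matching of these two divisors, and this is where Lemmas \ref{lemma_j_N}, \ref{lemma_q}, and \ref{lemma_qbar} do the work. Using the fundamental region $R$, each elliptic orbit $[\beta]$ has a unique representative with $\tau_\beta \in R$, and Lemma \ref{lemma_qbar} gives a bijection $\bar q$ between $\overline{\mathrm{M}}_{n,N}^{\mathrm{ell}}(R)$ and the disjoint union $\bigsqcup_{|r|<2\sqrt n} \mathrm{Q}_{r^2-4n,N}/\Gamma_0(N)$ sending $\bar\beta$ to $(\mathrm{sgn}(c_\beta)\mathrm{tr}(\beta), [q(\beta)])$, with $\tau_\beta = \tau_{q(\beta)}$ by Lemma \ref{lemma_q}(3). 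So the \emph{support} of the two divisors agrees. What remains — and this is the main obstacle — is matching \emph{multiplicities}: I must show that the order of vanishing of $\Phi_n^{\scalebox{0.6}{$\Gamma_0(N)$}}(j_N, j_N)$ at a point $P_0 = \Gamma_0(N)\tau_0 \in Y_0(N)$, summed over all factors $g_\alpha$ contributing a zero there, equals the number of classes $[Q]$ with $\Gamma_0(N)\tau_Q = P_0$, which by the bijection is the number of elliptic classes $[\beta]$ with $\tau_\beta$ equivalent to $\tau_0$. This is exactly what Lemma \ref{lemma_j_N} delivers: for each such $\beta$, after conjugating so that $\beta$ fixes $\tau_0$, the limit $\lim_{\tau\to\tau_0}\big(j_N(\tau)-j_N(\beta\tau)\big)/\big(j_N(\tau)-j_N(\tau_0)\big)$ is finite and nonzero, so $g_\beta$ vanishes to the \emph{same} order at $P_0$ as does $j_N(\tau)-j_N(\tau_0)$ — and the latter vanishes to order exactly one in the local uniformizer, hence contributes exactly $1$ per elliptic class, with factors coming from non-elliptic $\alpha$ (or scalar $\sqrt n$-multiples, excluded here) contributing nothing. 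Care is needed to see that the $\Gamma_0(N)$-orbits of $\mathrm{M}_{n,N}$ are correctly sorted into those giving an elliptic fixed point over $P_0$ and those that do not, and that the count of orbits $[\alpha]$ with a zero at $P_0$ matches the count of elliptic classes $[\beta]$ with $\tau_\beta \sim \tau_0$ — both following from Lemma \ref{lemma_qbar} once one passes between $\{\pm1\}$-classes and $\Gamma_0(N)$-orbits. Assembling these observations shows the two functions have identical divisors on $Y_0(N)$, proving the Claim.
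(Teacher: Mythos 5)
Your proposal is correct and follows essentially the same route as the paper: it identifies the zeros of $\Phi_n^{\scalebox{0.6}{$\Gamma_0(N)$}}(j_N,j_N)$ on $Y_0(N)$ with fixed points of elliptic elements, uses Lemma \ref{lemma_j_N} to get multiplicity one per contributing factor, and matches against the $H_{r^2-4n,N}$ side via Lemmas \ref{lemma_q} and \ref{lemma_qbar}. The only difference is bookkeeping: where you count orbits directly and defer the $\{\pm 1\}$-class versus $\Gamma_0(N)$-orbit comparison to a final remark, the paper makes this precise by introducing an explicit equivalence relation on $\overline{\mathrm{M}}_{n,N}^{\mathrm{ell}}(R)$ whose classes have size $|\overline{\Gamma}_0(N)_{\tau_\alpha}|$, which is exactly the fact needed to convert the count of left cosets into the weighted count $\sum 1/\omega_{Q,N}$.
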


\begin{proof}[Proof of Claim 1]
Note that $\Phi_n^{\scalebox{0.6}{$\Gamma_0(N)$}}(j_N(\tau), j_N(\tau))$ vanishes at $\Gamma_0(N)\tau_0 \in Y_0(N)$ if and only if there exists $\bar{\alpha} \in \overline{\mathrm{M}}_{n, N}^{\mathrm{ell}}$ such that $\alpha(\tau_0) = \tau_0$. We define an equivalence relation on $\overline{\mathrm{M}}_{n, N}^{\mathrm{ell}}(R)$ by
\[ \bar{\alpha} \sim \bar{\beta} \quad \mbox{if} \ \tau_\alpha = \tau_\beta, \ \overline{\Gamma}_0(N) \bar{\alpha} = \overline{\Gamma}_0(N) \bar{\beta} \]
and denote by $\langle \bar{\alpha} \rangle$ the equivalence class of $\bar{\alpha}$. Then we infer from the definitions of $\Phi_n^{\scalebox{0.6}{$\Gamma_0(N)$}}(j_N(\tau), j_N(\tau)) = \prod_{[\bar{\alpha}] \in \overline{\Gamma}_0(N) \backslash \overline{\mathrm{M}}_{n, N}} (j_N(\tau) - j_N(\alpha(\tau)))$ and $\overline{\mathrm{M}}_{n, N}^{\mathrm{ell}}(R)/\sim$ that both of
\[ \Phi_n^{\scalebox{0.6}{$\Gamma_0(N)$}}(j_N(\tau), j_N(\tau)) \qquad \mbox{and} \qquad \prod_{\langle \bar{\alpha} \rangle \in \overline{\mathrm{M}}_{n, N}^{\mathrm{ell}}(R) / \sim} \left(j_N(\tau) - j_N(\alpha(\tau))\right) \]
have the same divisor on $Y_0(N)$.

Now Lemma \ref{lemma_j_N} says that the contribution of $j_N(\tau) - j_N(\alpha(\tau))$ to the multiplicity of the zero $\Gamma_0(N) \tau_\alpha \in Y_0(N)$ of $\Phi_n^{\scalebox{0.6}{$\Gamma_0(N)$}}(j_N(\tau), j_N(\tau))$ is the same as the contribution of $j_N(\tau) - j_N(\tau_\alpha)$ to the multiplicity, which is in fact the order $1$. Hence the latter of the two can be replaced by
\[ \qquad \prod_{\langle \bar{\alpha} \rangle \in \overline{\mathrm{M}}_{n, N}^{\mathrm{ell}}(R) / \sim} \left(j_N(\tau) - j_N(\tau_\alpha)\right). \]
Since $\langle \bar{\alpha} \rangle = \{ \bar{\gamma} \bar{\alpha} \in \overline{\mathrm{M}}_{n, N}^{\mathrm{ell}}(R) \, | \, \bar{\gamma} \in \overline{\Gamma}_0(N)_{\tau_\alpha} \}$, we see that $|\langle \bar{\alpha} \rangle| = |\overline{\Gamma}_0(N)_{\tau_\alpha}|$ and deduce that
\begin{eqnarray*}
\prod_{\langle \bar{\alpha} \rangle \in \overline{\mathrm{M}}_{n, N}^{\mathrm{ell}}(R) / \sim} \left(j_N(\tau) - j_N(\tau_\alpha)\right) & = & \prod_{\langle \bar{\alpha} \rangle \in \overline{\mathrm{M}}_{n, N}^{\mathrm{ell}}(R) / \sim} \ \prod_{\bar{\beta} \in \langle \bar{\alpha} \rangle} \left(j_N(\tau) - j_N(\tau_\alpha)\right)^{1/|\overline{\Gamma}_0(N)_{\tau_\alpha}|} \\
& = & \prod_{\bar{\alpha} \in \overline{\mathrm{M}}_{n, N}^{\mathrm{ell}}(R)} (j_N(\tau) - j_N(\tau_\alpha))^{1/|\overline{\Gamma}_0(N)_{\tau_\alpha}|}.
\end{eqnarray*}
By Lemma \ref{lemma_q} we have $\tau_\alpha = \tau_{q(\alpha)}$ and $|\overline{\Gamma}_0(N)_{\tau_\alpha}| = |\overline{\Gamma}_0(N)_{q(\alpha)}| = \omega_{q(\alpha), N}$ and hence the proof is complete by means of Lemma \ref{lemma_qbar}.
\end{proof}

Now it remains to compute the divisor at the cusps.

\begin{claim}
Both of
\[ \Phi_n^{\scalebox{0.6}{$\Gamma_0(N)$}}(j_N(\tau), \, j_N(\tau)) \qquad \mbox{and} \qquad \prod_{u/v \in S'-\{ 1/N \}} \big(j_N(\tau) \, - \, j_N(u/v)\big)^{\nu_{u/v, n, N}} \]
have the same divisor on $X_0(N) - Y_0(N) - \{ \infty \}$.
\end{claim}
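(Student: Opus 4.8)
The plan is to reduce Claim 2 to a pointwise order computation at the cusps. Since $j_N$ is a Hauptmodul and each cusp of $S'-\{1/N\}$ is inequivalent to $\infty$, for a fixed such cusp $u/v$ the function $j_N(\tau)-j_N(u/v)$ has a simple zero at $u/v$ (Lemma \ref{lemma_c_1}, as $u/v$ is not the pole of $j_N$) and does not vanish at any other cusp of $S'$, because distinct cusps of $X_0(N)$ have distinct $j_N$-values. Hence the right-hand product has order exactly $\nu_{u/v,n,N}$ at each cusp $u/v\in S'-\{1/N\}$, so it suffices to show $\mathrm{ord}_{u/v}\,\Phi_n^{\scalebox{0.6}{$\Gamma_0(N)$}}(j_N,j_N)=\nu_{u/v,n,N}$ for every such cusp. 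I would compute the left side from the representatives $\alpha_{a,b}$, writing it as $\sum_{a\mid n}\sum_{0\le b<n/a}\mathrm{ord}_{u/v}\big(j_N(\tau)-j_N(\alpha_{a,b}(\tau))\big)$, after fixing $\sigma=\left(\begin{smallmatrix}u&u'\\v&v'\end{smallmatrix}\right)\in\mathrm{SL}_2(\mathbb Z)$ carrying $\infty$ to the $S'$-representative $u/v$ (not an arbitrary point of its cusp class), the width $W$ of $u/v$ for $\Gamma_0(N)$, and the local parameter $q_{u/v}=e^{2\pi i\sigma^{-1}(\tau)/W}$.

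The first point is that no factor has a pole at $u/v$: since $u/v\not\sim\infty$ and $v\mid N$ one has $v<N$, and because $(n,N)=1$ the reduced denominator of $\alpha_{a,b}(u/v)=\frac{au+bv}{(n/a)v}$ is coprime to $N$, so $\alpha_{a,b}(u/v)\not\sim\infty$ and $j_N\circ\alpha_{a,b}$ is holomorphic at $u/v$. Thus every factor has nonnegative order at $u/v$, and the order is positive exactly when $j_N(u/v)=j_N(\alpha_{a,b}(u/v))$, i.e.\ when $\alpha_{a,b}(u/v)\sim u/v$, which by Lemma \ref{lemma_cusp} means $n\equiv d_{a,b}^2\pmod{(v,N/v)}$ with $d_{a,b}:=(au+bv,\,n/a)$.

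For such a vanishing factor I would compute the order as follows. Pick $\rho\in\Gamma_0(N)$ with $\rho(u/v)=\alpha_{a,b}(u/v)$ and set $\beta=\rho^{-1}\alpha_{a,b}$; then $\beta$ fixes $u/v$, so $\sigma^{-1}\beta\sigma$ fixes $\infty$ and equals $\left(\begin{smallmatrix}\lambda&\mu\\0&\nu\end{smallmatrix}\right)$ for integers $\lambda,\mu,\nu$ with $\lambda\nu=n$ and (after replacing $\rho$ by $-\rho$ if needed) $\lambda,\nu>0$. Feeding $w'=\frac{\lambda w+\mu}{\nu}$ into the Hauptmodul expansion $j_N(\sigma(w'))=j_N(u/v)+\sum_{m\ge1}c_m e^{2\pi imw'/W}$ (with $c_1\neq0$ by Lemma \ref{lemma_c_1}) and using that $j_N\circ\alpha_{a,b}=j_N\circ\beta$ on $\mathbb H$, one gets $j_N(\alpha_{a,b}(\tau))-j_N(u/v)=c_1\zeta\,q_{u/v}^{\lambda/\nu}+\cdots$ for a root of unity $\zeta$. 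Because $n$ is not a perfect square, $\lambda\neq\nu$, so the leading terms of $j_N-j_N(u/v)$ (of order $1$) and of $j_N\circ\alpha_{a,b}-j_N(u/v)$ (of order $\lambda/\nu$) cannot cancel, and $\mathrm{ord}_{u/v}\big(j_N-j_N\circ\alpha_{a,b}\big)=\min\{1,\lambda/\nu\}$. Finally, comparing the bottom rows of the identity $\alpha_{a,b}\sigma=(\rho\sigma)\left(\begin{smallmatrix}\lambda&\mu\\0&\nu\end{smallmatrix}\right)$, where $\rho\sigma\in\mathrm{SL}_2(\mathbb Z)$, and noting that $(au+bv,\,v)=1$ since $(u,v)=1$ and $(a,v)\mid(n,N)=1$, one identifies the lower-left entry of $\rho\sigma$ as $(n/a)v/d_{a,b}$, which forces $\lambda=d_{a,b}$ and $\nu=n/d_{a,b}$.

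Hence a pair $(a,b)$ contributes $\min\{1,\,d_{a,b}^2/n\}$ when $n\equiv d_{a,b}^2\pmod{(v,N/v)}$ and $0$ otherwise, and it remains to sum. Since $(v,\,n/a)\mid(N,n)=1$, the quantity $au+bv$ runs over all of $\mathbb Z/(n/a)$ as $b$ runs over $\{0,\dots,n/a-1\}$, so for each divisor $d\mid n$ the number of pairs $(a,b)$ with $d_{a,b}=d$ equals $\sum_{d\mid m\mid n}\varphi(m/d)=\sum_{k\mid n/d}\varphi(k)=n/d$. As the congruence condition on $d_{a,b}$ depends only on $d$, grouping the sum by the value of $d_{a,b}$ and using $\tfrac nd\min\{1,d^2/n\}=\min\{d,n/d\}$ gives
\[
\mathrm{ord}_{u/v}\,\Phi_n^{\scalebox{0.6}{$\Gamma_0(N)$}}(j_N,j_N)\ =\ \sum_{\substack{d\mid n\\ n\equiv d^2\ (\mathrm{mod}\ (v,N/v))}}\tfrac nd\,\min\{1,\,d^2/n\}\ =\ \nu_{u/v,n,N},
\]
which is Claim 2. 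I expect the crux to be the local analysis of the third paragraph: establishing the cusp width and the normal form $\sigma^{-1}\beta\sigma$, and---most delicately---keeping all orders relative to the fixed $S'$-representative of the cusp, since the order of an individual non-$\Gamma_0(N)$-invariant factor depends on that choice even though the total sum does not, together with the gcd bookkeeping that pins $\lambda$ to $d_{a,b}$. Granting this, the Euler-totient count is routine.
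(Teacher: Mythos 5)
Your proof is correct and is essentially the paper's argument: fix the representative $u/v \in S'$, use Lemma \ref{lemma_cusp} to decide which factors $j_N - j_N\circ\alpha_{a,b}$ vanish there, extract the leading exponent $\min\{1,\, d_{a,b}^2/n\}$ of each vanishing factor (your upper-triangular normal form of $\sigma^{-1}\rho^{-1}\alpha_{a,b}\,\sigma$ is equivalent to the paper's computation of the width of $u/v$ for $\alpha_{a,b}^{-1}\Gamma_0(N)\alpha_{a,b}$, and is in fact the same device the paper itself deploys in Claim 2 of Theorem \ref{theorem_main2}), rule out cancellation of leading terms because $n$ is not a square, and sum to $\nu_{u/v,n,N}$ exactly as in the paper's reindexing. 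The only slip is harmless: the reduced denominator of $\alpha_{a,b}(u/v)$ is $v\cdot(n/a)/d_{a,b}$, which is not coprime to $N$ when $v>1$; what you actually need, and what does hold since $v$ is a proper divisor of $N$ and $(n/a)/d_{a,b}$ is prime to $N$, is only that $N$ fails to divide it, which still yields $\alpha_{a,b}(u/v)\not\sim\infty$ and hence the holomorphy of every factor at $u/v$.
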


\begin{proof}[Proof of Claim 2]
Because $\Phi_n^{\scalebox{0.6}{$\Gamma_0(N)$}}(j_N(\tau), j_N(\tau))$ has a zero at $u/v$ if and only if $j_N(\tau) - j_N(\alpha_{a, b}(\tau))$ has a zero at $u/v$ for some $a, b$, we need to first compute the Fourier expansion of $j_N(\tau) - j_N(\alpha_{a, b}(\tau))$ at $u/v$ and then sum up their orders over all such $a, b$.

Since $(u, v) = 1$, there are $x, y \in \mathbb Z$ such that $ux - vy = 1$. Put $\rho = (\begin{smallmatrix} u & y \\ v & x \end{smallmatrix})$. Because $h := \frac{N}{v(v, N/v)}$ is the width of the cusp $u/v$ of $\Gamma_0(N)$, $q_h := e^{2 \pi i \tau / h}$ is a local parameter at $u/v \in X_0(N)$. So by Lemma \ref{lemma_c_1} we have the Fourier expansion of $j_N(\tau)$ at $u/v$ as
\[ j_N(\rho(\tau)) \ = \ j_N(u/v) \ + \ \sum_{m = 1}^\infty c_m q_h^m \]
with $c_1 \neq 0$.

Now we compute the Fourier expansion of $j_N(\alpha_{a, b}(\tau))$ at $u/v$. Observe that $j_N(\alpha_{a, b}(\tau))$ generates the function field $\mathbb C(X')$, where $X' = \Gamma' \backslash \mathbb H^*$ and $\Gamma' = \alpha_{a, b}^{-1} \Gamma_0(N) \alpha_{a, b}$. By definition, the width of the cusp $u/v$ of $\Gamma'$ is the smallest positive number $h'$ such that $\rho (\begin{smallmatrix} 1 & h' \\ 0 & 1 \end{smallmatrix}) \rho^{-1} \in \{ \pm 1 \} \cdot \Gamma'$. By a straightforward computation we obtain that
\begin{eqnarray*}
\rho (\begin{smallmatrix} 1 & h' \\ 0 & 1 \end{smallmatrix}) \rho^{-1} \, \in \, \{ \pm 1 \} \cdot \Gamma' & \iff & \begin{pmatrix}
1 - uvh' - bv^2h'/a & (au + bv)^2 h'/n \\
-n v^2 h' /a^2 & 1 + uvh' + bv^2h'/a
\end{pmatrix} \ \in \ \Gamma_0(N) \\
& \iff & h' \ \in \ \frac{a}{v(au + bv)} \mathbb Z \, \bigcap \, \frac{n}{(au + bv)^2} \mathbb Z \, \bigcap \, \frac{a^2 N}{nv^2} \mathbb Z \\
& \iff & h' \ \in \ \frac{\mathrm{lcm}(anv(au + bv), \, n^2v^2, \, a^2 (au + bv)^2 N)}{nv^2 (au + bv)^2} \mathbb Z \\
& \iff & h' \ \in \ \frac{N}{v(v, N/v)} \cdot \frac{n}{(au + bv, \, n/a)^2} \mathbb Z,
\end{eqnarray*}
from which we see that the width $h'$ of the cusp $u/v$ of $\Gamma'$ is given as
\[ h' \ = \ \frac{N}{v(v, N/v)} \cdot \frac{n}{(au + bv, \, n/a)^2}. \]
Thus the Fourier expansion of $j_N(\alpha_{a, b}(\tau))$ at the cusp $u/v \in X'$ is of the form
\[ j_N(\alpha_{a, b}(\rho(\tau))) \ = \ j_N(\alpha_{a, b}(u/v)) \ + \ \sum_{m = 1}^\infty d_m q_{h'}^m \]
with $d_1 \neq 0$. Here we have used Lemma \ref{lemma_c_1} again.

Because $\alpha_{a, b}(u/v)$ is equivalent to $u/v$ under $\Gamma_0(N)$, we obtain that
\[ j_N(\rho(\tau)) \ - \ j_N(\alpha_{a, b}(\rho(\tau))) \ = \ c_1 q_h \ - \ d_1 q_{h'} \ + \ \cdots. \]
Note that $h \neq h'$ because $n$ is not a perfect square. Hence all of $c_1 q_h$, $d_1 q_{h'}$, $c_1 q_h - d_1 q_{h'}$ are nonzero. Thus the lowest degree with respect to $q = e^{2 \pi i \tau}$ is $\mathrm{min} \{ 1/h, 1/h' \}$. Combining this with Lemma \ref{lemma_cusp} we derive that the order of $\Phi_n^{\scalebox{0.6}{$\Gamma_0(N)$}}(j_N(\tau), j_N(\tau)) \in \mathbb C(X_0(N))$ at the cusp $u/v$ is
\begin{eqnarray*}
& & h \cdot \sum_{a | n, \, 0 \leq b < n/a \atop n \equiv (au + bv, n/a)^2 \, \bmod {(v, N/v)}} \mathrm{min} \{ 1/h, \ 1/h' \} \\
& = & \sum_{a|n} \sum_{0 \leq b < n/a \atop n \equiv (b, n/a)^2 \bmod{(v, N/v)}} \min \left\{ 1, \ \frac{(b, n/a)^2}{n} \right\} \qquad \mbox{because } (v, n/a) \ = \ 1 \\
& = & \sum_{d|n} \sum_{0 \leq b < d \atop n \equiv (b, d)^2 \bmod{(v, N/v)}} \min \left\{ 1, \ \frac{(b, d)^2}{n} \right\}\\
& = & \sum_{e|n \atop n \equiv e^2 \bmod{(v, N/v)}} \sum_{d'|\frac{n}{e}} \sum_{0 \leq b' < d' \atop (b', d') = 1} \min \{ 1, \ e^2 / n \} \\
& = & \sum_{e|n \atop n \equiv e^2 \bmod{(v, N/v)}} \min \{ e, \ n/e \} \\
& = & \nu_{u/v, n, N}.
\end{eqnarray*}
Since $j_N(\tau) - j_N(u/v) \in \mathbb C(X_0(N))$ has a simple zero at $u/v$, both of $\Phi_n^{\scalebox{0.6}{$\Gamma_0(N)$}}(j_N(\tau), j_N(\tau))$ and $(j_N(\tau) - j_N(u/v))^{\nu_{u/v, n, N}}$ have the same order at $u/v$.
\end{proof}

We infer from Claims 1 and 2 that both of
\[ \Phi_n^{\scalebox{0.6}{$\Gamma_0(N)$}}(j_N(\tau), j_N(\tau)) \ \ \mbox{and} \ \prod_{r \in \mathbb Z \atop |r| < 2 \sqrt{n}} H_{r^2 - 4n, N}(j_N(\tau)) \times \prod_{s \in S \atop s \not \sim \infty} \big(j_N(\tau) \, - \, j_N(s)\big)^{\nu_{s, n, N}} \]
have the same zeros with the same multiplicities on $X_0(N) - \{ \infty \}$. So they must have the unique pole $\infty$ with the same multiplicity. In conclusion, they have the same divisor on $X_0(N)$ and hence differ only by a constant multiple, which is $(-1)^{\frac{1}{2}\sigma_0(n)}$ by Lemma \ref{lemma_Phi} (3).
\end{proof}

We further define the Hurwitz-Kronecker class number $H(D, N)$ and the trace $t(D, N)$ of singular moduli as
\begin{eqnarray*}
H(D, N) & = & \sum_{[Q] \in \mathrm{Q}_{D, N} / \Gamma_0(N)} \frac{1}{\omega_{Q, N}} \\
t(D, N) & = & \sum_{[Q] \in \mathrm{Q}_{D, N} / \Gamma_0(N)} \frac{1}{\omega_{Q, N}} j_N(\tau_Q).
\end{eqnarray*}

\begin{corollary}\label{corollary_main1}
With the assumptions as in Theorem \ref{theorem_main1}, we have
\begin{eqnarray*}
\textrm{(1)} & \hspace{-.5cm} & \sum_{r \in \mathbb Z \atop |r| < 2 \sqrt{n}} H(r^2 - 4n, N) \ + \ \sum_{s \in S} \nu_{s, n, N} \ = \ 2\sigma(n) \\
\textrm{(2)} & \hspace{-.5cm} & \sum_{r \in \mathbb Z \atop |r| < 2 \sqrt{n}} t(r^2 - 4n, N) \ + \ \sum_{s \in S \atop s \not \sim \infty} \nu_{s, n, N} \cdot j_N(s) \ = \ \left\{ \begin{array}{ll}
2 & \mbox{if $4n + 1$ is a perfect square,} \\
0 & \mbox{otherwise.}
\end{array} \right.
\end{eqnarray*}
\end{corollary}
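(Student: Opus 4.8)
The plan is to extract both identities from the polynomial identity of Theorem~\ref{theorem_main1} by reading off its two leading coefficients at $\infty$: first with both sides viewed as polynomials in $X$, which gives (1), and then, after substituting $X=j_N(\tau)$, with both sides viewed as Laurent series in $q$, which gives (2).

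For (1) I would take degrees in $X$ on both sides of Theorem~\ref{theorem_main1}. The factor $H_{D,N}(X)$ has degree $\sum_{[Q]}1/\omega_{Q,N}=H(D,N)$, so the right-hand side has degree $\sum_{|r|<2\sqrt n}H(r^2-4n,N)+\sum_{s\not\sim\infty}\nu_{s,n,N}$, while $\deg_X\Phi_n^{\Gamma_0(N)}(X,X)=\sum_{d\mid n}\max\{d,n/d\}$ by Lemma~\ref{lemma_Phi}(3). Writing $\sum_{d\mid n}\max\{d,n/d\}=2\sigma(n)-\sum_{d\mid n}\min\{d,n/d\}$ and observing that for the cusp $s=\infty$ one has $b_\infty=0$, so that the modulus $(b_\infty,N/(b_\infty,N))$ is $1$ and hence $\nu_{\infty,n,N}=\sum_{d\mid n}\min\{d,n/d\}$, the term $\sum_{d\mid n}\min\{d,n/d\}$ is absorbed into the cusp sum; this turns $\sum_{s\not\sim\infty}$ into $\sum_{s\in S}$ and gives (1).

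For (2) I substitute $X=j_N(\tau)$ in Theorem~\ref{theorem_main1} and compare the coefficients of $q^{-M+1}$, where $M:=\sum_{d\mid n}\max\{d,n/d\}=\deg_X\Phi_n^{\Gamma_0(N)}(X,X)$, which by the degree count above is also the order of the common pole at $\infty$. Since $j_N$ has no constant term, $(j_N(\tau)-j_N(\tau_Q))^{1/\omega_{Q,N}}=q^{-1/\omega_{Q,N}}\bigl(1-\omega_{Q,N}^{-1}j_N(\tau_Q)\,q+\cdots\bigr)$ and $(j_N(\tau)-j_N(s))^{\nu_{s,n,N}}=q^{-\nu_{s,n,N}}\bigl(1-\nu_{s,n,N}j_N(s)\,q+\cdots\bigr)$; multiplying these and keeping the prefactor, the right-hand side of Theorem~\ref{theorem_main1} becomes $(-1)^{\frac12\sigma_0(n)}q^{-M}\bigl(1-\bigl(\sum_r t(r^2-4n,N)+\sum_{s\not\sim\infty}\nu_{s,n,N}j_N(s)\bigr)q+\cdots\bigr)$. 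On the left I write $\Phi_n^{\Gamma_0(N)}(j_N,j_N)=\prod_{a\mid n}\prod_{0\le b<n/a}\bigl(j_N(\tau)-j_N(\alpha_{a,b}(\tau))\bigr)$ with $\alpha_{a,b}=\bigl(\begin{smallmatrix}a&b\\0&n/a\end{smallmatrix}\bigr)$; its leading term is $(-1)^{\frac12\sigma_0(n)}q^{-M}$ by Lemma~\ref{lemma_Phi}(3), so $\Phi_n^{\Gamma_0(N)}(j_N,j_N)=(-1)^{\frac12\sigma_0(n)}q^{-M}\bigl(1+L\,q+\cdots\bigr)$ where $L$ is the coefficient of $q^1$ in $\log\Phi_n^{\Gamma_0(N)}(j_N,j_N)=\sum_{a,b}\log\bigl(j_N(\tau)-j_N(\alpha_{a,b}(\tau))\bigr)$. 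Equating the coefficients of $q^{-M+1}$ gives $\sum_r t(r^2-4n,N)+\sum_{s\not\sim\infty}\nu_{s,n,N}j_N(s)=-L$, so everything reduces to computing $L$.

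Computing $L$ is the main obstacle. The key point is that $j_N(\alpha_{a,b}(\tau))=\zeta_n^{-ab}q^{-a^2/n}+\sum_{m\ge1}c_m\zeta_n^{abm}q^{ma^2/n}$ depends on $b$ only through $\zeta_{n/a}^{b}$, so after summing $\log\bigl(j_N(\tau)-j_N(\alpha_{a,b}(\tau))\bigr)$ over $b$ modulo $d:=n/a$ every monomial whose $\zeta$-exponent is not divisible by $d$ vanishes, by the vanishing of $\sum_{b\bmod d}\zeta_d^{kb}$ for $d\nmid k$. Writing $a^2/n=a/d$ and expanding $\log(1+v)=\sum_{j\ge1}\frac{(-1)^{j+1}}{j}v^j$, one checks that the part of $\log\Phi_n^{\Gamma_0(N)}(j_N,j_N)$ coming from a divisor $a<d$, apart from the $-\log q$ term, involves only $q$-exponents $\ge d-a\ge1$, with a $q^1$-term occurring exactly when $j=d$ and $d-a=1$, that is $n=a(a+1)$; symmetrically a divisor $a>d$ contributes a $q^1$-term exactly when $n=(a-1)a$, while $a=d$ cannot occur since $n$ is not a perfect square. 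Hence $\log\Phi_n^{\Gamma_0(N)}(j_N,j_N)$ has the shape $(\text{constant})-M\log q+L\,q+O(q^2)$ with no intermediate fractional powers, and $L=0$ unless $n=k(k+1)$ for some $k\ge1$, in which case the two central divisors $a=k$ and $a=k+1$ each contribute $-1$ (the surviving root of unity $\zeta_n^{abd}$ being $1$), so $L=-2$. Since $n=k(k+1)$ holds if and only if $4n+1=(2k+1)^2$ is a perfect square, $-L$ equals $2$ when $4n+1$ is a perfect square and $0$ otherwise, which is exactly (2).
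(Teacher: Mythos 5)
Your proposal is correct and follows essentially the same route as the paper: part (1) by comparing degrees in $X$ in Theorem \ref{theorem_main1} (with the $\sum_{d|n}\min\{d,n/d\}$ term absorbed as $\nu_{\infty,n,N}$, exactly as in the paper), and part (2) by comparing the coefficient of $q^{-M+1}$ after substituting $X=j_N(\tau)$. The only cosmetic difference is in extracting the subleading coefficient of $\Phi_n^{\scalebox{0.6}{$\Gamma_0(N)$}}(j_N,j_N)$: you filter roots of unity inside a logarithmic expansion, while the paper uses the closed-form factorization $\prod_{0\le b<n/a}(q^{-1}-\zeta_n^{-ab}q_n^{-a^2})=q^{-n/a}-q^{-a}$; both yield the same criterion $|a-n/a|=1$, equivalent to $4n+1$ being a perfect square.
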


\begin{proof}
(1) The degree of $\Phi_n^{\scalebox{0.6}{$\Gamma_0(N)$}}(X, X)$ in $X$ is $\sum_{d|n} \mathrm{max} \{d, n/d\}$ by Lemma \ref{lemma_Phi} (3) and the degree of the right-hand side in Theorem \ref{theorem_main1} is
\[ \sum_{r \in \mathbb Z \atop |r| < 2 \sqrt{n}} H(r^2 - 4n, N) \ + \ \sum_{s\in S \atop s \not\sim \infty} \nu_{s, n, N}, \]
whence we deduce our assertion with the help of
\[ \sum_{d|n} \mathrm{max}\{d, \, n/d\} \ + \ \sum_{d|n} \mathrm{min}\{d, n/d\} \ = \ 2 \sigma(n). \]

(2) We have
\begin{eqnarray*}
H_{D, N}(j_N(\tau)) & = & \prod_{[Q] \in Q_{D, N}/\Gamma_0(N)} (j_N(\tau) \ - \ j_N(\tau_Q))^{\frac{1}{\omega_{Q, N}}} \\
& = & \prod_{[Q] \in Q_{D, N}/\Gamma_0(N)} (q^{-1} \ - \ j_N(\tau_Q) \ + \ \mathrm{O}(q))^{\frac{1}{\omega_{Q, N}}} \\
& = & \prod_{[Q] \in Q_{D, N}/\Gamma_0(N)} q^{-\frac{1}{\omega_{Q, N}}} \Big(1 \ - \ \frac{1}{\omega_{Q, N}} j_N(\tau_Q)q \ + \ \mathrm{O}(q^2)\Big) \\
& = & q^{-H(D, N)} (1 \ - \ t(D, N)q \ + \ \mathrm{O}(q^2))
\end{eqnarray*}
and
\begin{eqnarray*}
\prod_{s \in S \atop s \not\sim \infty} (j_N(\tau) \ - \ j_N(s))^{\nu_{s, n, N}} & = & \prod_{s \in S \atop s \not\sim \infty} (q^{-1} \ - \ j_N(s) \ + \ \mathrm{O}(q))^{\nu_{s, n, N}} \\
& = & q^{-\sum_{s \in S \atop s \not\sim \infty} \nu_{s, n, N}} \Big(1 \ - \ \sum_{s \in S \atop s \not\sim \infty} \nu_{s, n, N} \cdot j_N(s) q \ + \ \mathrm{O}(q^2)\Big).
\end{eqnarray*}

On the other hand, we have
\begin{eqnarray*}
\Phi_n^{\scalebox{0.6}{$\Gamma_0(N)$}}(j_N(\tau), \, j_N(\tau)) & = & \prod_{a|n} \prod_{0 \leq b < n/a} \Big(j_N(\tau) \ - \ j_N\Big(\frac{a\tau + b}{n/a}\Big)\Big) \\
& = & \prod_{a|n} \prod_{0 \leq b < n/a} (q^{-1} \ - \ \zeta_n^{-ab} q_n^{-a^2} \ + \ \mathrm{O}(q_n)) \\
& = & \prod_{a|n} (q^{-n/a} \ - \ q^{-a})(1 \ + \ \mathrm{O}(q^{1 + \frac{1}{n}})),
\end{eqnarray*}
because $q^{-n/a} - q^{-a} = \prod_{0 \leq b < n/a}(q^{-1} - \zeta_n^{-ab} q_n^{-a^2})$. Note that
\[ (q^{-n/a} \ - \ q^{-a})(1 \ + \ \mathrm{O}(q^{1 + \frac{1}{n}})) \ = \ \pm q^{-\mathrm{max}\{ a, n/a \}} (1 \ - \ \varepsilon_a \cdot q \ + \ \mathrm{O}(q^{1 + \frac{1}{n}})), \]
where $\varepsilon_a$ is $1$ if $|a - n/a| = 1$ (which can happen if and only if $4n + 1$ is a perfect square) and $0$ otherwise. Thus we see that
\[ \Phi_n^{\scalebox{0.6}{$\Gamma_0(N)$}}(j_N(\tau), \, j_N(\tau)) \ = \ (-1)^{\sum_{a|n, \, a \geq n/a} 1} \, q^{-\sum_{a|n} \max\{ a, n/a \}} (1 \ - \ (\sum_{a|n} \varepsilon_a) q \ + \ \mathrm{O}(q^{1 + \frac{1}{n}})). \]

We may recover the identity (1) by comparing the lowest degree in $q$ of both sides in Theorem \ref{theorem_main1}. A new identity (2) can further be obtained by comparing the next lowest degree with the help of
\[ \sum_{a|n} \varepsilon_a \ = \ \left\{ \begin{array}{ll}
2 & \mbox{if $4n + 1$ is a perfect square,} \\
0 & \mbox{otherwise.}
\end{array} \right. \]
\end{proof}

If $(n, N) = 1$ and if $n$ is a perfect square, then $X - Y$ divides $\Phi_n^{\scalebox{0.6}{$\Gamma_0(N)$}}(X, Y)$, so $\Phi_n^{\scalebox{0.6}{$\Gamma_0(N)$}}(X, X)$ becomes the constant polynomial $0$. Thus in that case it is natural to deal with $\left.\frac{\Phi_n^{\scalebox{0.6}{$\Gamma_0(N)$}}(X, Y)}{\Phi_1^{\scalebox{0.6}{$\Gamma_0(N)$}}(X, Y)}\right|_{Y = X}$ like Zagier did.

\begin{theorem}\label{theorem_main2}
Assume that the genus of $\Gamma_0(N)$ is zero and that $n$ is a positive integer relatively prime to $N$. Assume further that $n$ is a perfect square. Then we have
\begin{eqnarray*}
\textrm{(1)} & \hspace{-.5cm} & \hspace{.3cm} \left. \frac{\Phi_n^{\scalebox{0.6}{$\Gamma_0(N)$}}(X, Y)}{\Phi_1^{\scalebox{0.6}{$\Gamma_0(N)$}}(X, Y)}\right|_{Y = X} \ = \ (-1)^{\frac{1}{2} (\sigma_0(n)-1)} \sqrt{n} \ \times \ \frac{\prod_{r \in \mathbb Z \atop |r| < 2 \sqrt{n}} H_{r^2 - 4n, N}(X)}{\prod_{r \in \mathbb Z \atop |r| < 2} H_{r^2 - 4, N}(X)} \ \times \ \prod_{s \in S \atop s \not \sim \infty} \big(X \, - \, j_N(s)\big)^{\nu_{s, n, N} - 1} \\
\textrm{(2)} & \hspace{-.5cm} & \sum_{r \in \mathbb Z \atop |r| < 2 \sqrt{n}} H(r^2 - 4n, N) \ + \ \sum_{s \in S} \nu_{s, n, N} \ = \ 2 \sigma(n) \ + \ \sum_{r \in \mathbb Z \atop |r| < 2} H(r^2 - 4, N) \ + \ \sum_{d|N} \varphi((d, \, N/d)) \ - \ 2 \\
\textrm{(3)} & \hspace{-.5cm} & \sum_{r \in \mathbb Z \atop |r| < 2 \sqrt{n}} t(r^2 - 4n, N) \ + \ \sum_{s \in S \atop s \not \sim \infty} (\nu_{s, n, N} - 1)j_N(s) \ = \ \sum_{r \in \mathbb Z \atop |r| < 2} t(r^2 - 4, N).
\end{eqnarray*}
Here, $S$ denotes the set of all inequivalent cusps of $\Gamma_0(N)$.
\end{theorem}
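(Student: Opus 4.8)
The plan is to mimic the proof of Theorem~\ref{theorem_main1}, the only new feature being that the trivial class $\overline{\sqrt n\,I}\in\overline{\mathrm M}_{n,N}$ must be set aside. Since $n=\Box$ and $(n,N)=1$, the matrix $\sqrt n\,I$ lies in $\mathrm M_{n,N}$, acts as the identity on $\mathbb H$, and its orbit $\Gamma_0(N)\sqrt n\,I=\sqrt n\,\Gamma_0(N)$ is the unique class in $\Gamma_0(N)\backslash\mathrm M_{n,N}$ for which the factor $X-j_N(\alpha(\tau))$ in $\Phi_n^{\scalebox{0.6}{$\Gamma_0(N)$}}(X,j_N)$ is identically $X-j_N(\tau)=X-Y$; hence $\Phi_1^{\scalebox{0.6}{$\Gamma_0(N)$}}(X,Y)=X-Y$ divides $\Phi_n^{\scalebox{0.6}{$\Gamma_0(N)$}}(X,Y)$ in $\mathbb Z[X,Y]$ (using Lemma~\ref{lemma_Phi}~(2)), and $G(X):=\left.\frac{\Phi_n^{\scalebox{0.6}{$\Gamma_0(N)$}}(X,Y)}{\Phi_1^{\scalebox{0.6}{$\Gamma_0(N)$}}(X,Y)}\right|_{Y=X}\in\mathbb Z[X]$ satisfies $G(j_N(\tau))=\prod_{[\alpha]\neq[\sqrt n\,I]}\big(j_N(\tau)-j_N(\alpha(\tau))\big)$, a function holomorphic on $X_0(N)-\{\infty\}$. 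I will compute its divisor to get (1), and then extract (2) and (3) from $q$-expansions exactly as in Corollary~\ref{corollary_main1}.

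For the divisor on $Y_0(N)$ I would reproduce Claim~1 in the proof of Theorem~\ref{theorem_main1} essentially verbatim, the point being that deleting the orbit $[\sqrt n\,I]$ removes exactly the elliptic elements $\alpha\in\mathrm M_{n,N}^{\mathrm{ell}}$ of the form $\sqrt n\,\gamma$ with $\gamma\in\Gamma_0(N)$ elliptic, and that Lemma~\ref{lemma_j_N} applies to every one of the remaining elliptic elements, since its hypothesis is precisely $\alpha\notin\sqrt n\,\Gamma_0(N)$. Under the bijection $\bar q$ of Lemma~\ref{lemma_qbar}, the deleted elements correspond to the pairs $(\sqrt n\,r',[\sqrt n\,Q'])$ with $|r'|<2$ and $[Q']\in\mathrm Q_{r'^2-4,N}/\Gamma_0(N)$; since $\tau_{\sqrt n\,Q'}=\tau_{Q'}$ and $\omega_{\sqrt n\,Q',N}=\omega_{Q',N}$, deleting them removes from the divisor exactly the divisor of $\prod_{|r'|<2}H_{r'^2-4,N}(j_N(\tau))$. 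Hence on $Y_0(N)$ the functions $G(j_N(\tau))$ and $\prod_{|r|<2\sqrt n}H_{r^2-4n,N}(j_N(\tau))\,\big/\,\prod_{|r'|<2}H_{r'^2-4,N}(j_N(\tau))$ have the same divisor.

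At a cusp $u/v\not\sim\infty$ I would rerun Claim~2, keeping the width formula $h'=\frac{N}{v(v,N/v)}\cdot\frac{n}{(au+bv,\,n/a)^2}$; because $n=\Box$, the equality $h=h'$ can now occur, exactly when $(au+bv,n/a)^2=n$. The one genuinely new argument---the cusp analogue of Lemma~\ref{lemma_j_N}---is that, for a pair $(a,b)\neq(\sqrt n,0)$ with $h=h'$, the leading Fourier coefficients $c_1,d_1$ of $j_N$ and $j_N\circ\alpha_{a,b}$ at $u/v$ (in the notation of Claim~2) still differ. Indeed, choosing $\sigma\in\Gamma_0(N)$ with $\sigma\alpha_{a,b}(u/v)=u/v$ and $\rho\in\mathrm{SL}_2(\mathbb Z)$ with $\rho(\infty)=u/v$, the matrix $\rho^{-1}\sigma\alpha_{a,b}\rho$ fixes $\infty$ and so is upper triangular; comparing $j_N(\alpha_{a,b}(\rho(\tau)))=j_N(\rho((\rho^{-1}\sigma\alpha_{a,b}\rho)(\tau)))$ with the expansion $j_N(\alpha_{a,b}(\rho(\tau)))=j_N(u/v)+d_1 q_{h'}+\cdots$ shows that $h=h'$ forces the diagonal entries of $\rho^{-1}\sigma\alpha_{a,b}\rho$ to be $(\sqrt n,\sqrt n)$, and that $c_1=d_1$ would then force $\sqrt n\,h$ to divide its upper right entry, whence $\alpha_{a,b}\in\sqrt n\,\Gamma_0(N)$, i.e.\ $(a,b)=(\sqrt n,0)$, a contradiction. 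Consequently every surviving pair contributes order $\min\{1,(au+bv,n/a)^2/n\}$, the combinatorial simplification of Claim~2 goes through verbatim, and $\mathrm{ord}_{u/v}\,G(j_N(\tau))=\nu_{u/v,n,N}-1$, the subtracted $1$ being precisely what the omitted pair $(\sqrt n,0)$ contributes to that combinatorial sum. Matching divisors over $X_0(N)-\{\infty\}$, hence over $X_0(N)$, shows the two sides of (1) differ by a constant, which is the leading coefficient $(-1)^{\frac12(\sigma_0(n)-1)}\sqrt n$ of $G(X)$ from Lemma~\ref{lemma_Phi}~(4); this is (1).

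Statement (2) then follows by comparing the lowest power of $q$: $\deg_X G(X)=\sum_{d|n}\max\{d,n/d\}-1$ by Lemma~\ref{lemma_Phi}~(4), whereas the right side of (1) has degree $\sum_{|r|<2\sqrt n}H(r^2-4n,N)-\sum_{|r'|<2}H(r'^2-4,N)+\sum_{s\not\sim\infty}(\nu_{s,n,N}-1)$, so feeding in $\sum_{d|n}\max\{d,n/d\}+\sum_{d|n}\min\{d,n/d\}=2\sigma(n)$, the value $\nu_{\infty,n,N}=\sum_{d|n}\min\{d,n/d\}$ (the relevant modulus at $\infty$ being $1$), and the cusp count $|S|=\sum_{d|N}\varphi((d,N/d))$, and rearranging, gives (2). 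For (3) I would compare the next coefficient, as in Corollary~\ref{corollary_main1}: $\Phi_n^{\scalebox{0.6}{$\Gamma_0(N)$}}(j_N(\tau),j_N(\tau))$ factors over the divisors $a\mid n$, the block $a=\sqrt n$ degenerates, and the surviving part of that block is the extra factor $\prod_{0<b<\sqrt n}\big(j_N(\tau)-j_N(\tau+b/\sqrt n)\big)=\sqrt n\,q^{-(\sqrt n-1)}(1+\mathrm{O}(q^2))$, which carries no $q^1$ term; since also $4n+1$ is never a perfect square when $n=\Box$ (so every $\varepsilon_a=0$), the coefficient of $q$ in the normalized $q$-expansion of $G(j_N(\tau))$ vanishes, and equating it with the corresponding coefficient on the right of (1) yields (3). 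The only step not already present in Section~2 is the cusp analogue of Lemma~\ref{lemma_j_N} in the third paragraph, which I expect to be the main obstacle.
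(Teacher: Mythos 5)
Your proposal is correct and follows essentially the same route as the paper: divide out the single class $[\sqrt{n}\,I]$ (equivalently the pair $(a,b)=(\sqrt n,0)$), match divisors on $Y_0(N)$ by excising $\sqrt n\,\overline{\Gamma}_0(N)$ and identifying its contribution with $\prod_{|r|<2}H_{r^2-4,N}$, handle the cusps by the same upper-triangularization argument showing $c_1q_h=d_1q_{h'}$ forces $\alpha_{a,b}\in\sqrt n\,\Gamma_0(N)$, and extract (2) and (3) from degrees and the next $q$-coefficient exactly as in Corollary \ref{corollary_main1}. The step you flag as the "main obstacle" is precisely the paper's Claim 2, and your reconstruction of it is the same argument.
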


\begin{proof}
We have
\[ \frac{\Phi_n^{\scalebox{0.6}{$\Gamma_0(N)$}}(X, j_N)}{\Phi_1^{\scalebox{0.6}{$\Gamma_0(N)$}}(X, j_N)} \ = \ \prod_{a|n, \, 0 \leq b < n/a \atop (a, b) \neq (\sqrt{n}, 0)} (X \ - \ j_N(\alpha_{a, b}(\tau))). \]

\begin{claim'}
Both of
\[ \prod_{a|n, \, 0 \leq b < n/a \atop (a, b) \neq (\sqrt{n}, 0)} (j_N(\tau) \ - \ j_N(\alpha_{a, b}(\tau))) \qquad \mbox{and} \qquad \frac{\prod_{r \in \mathbb Z \atop |r| < 2 \sqrt{n}} H_{r^2 - 4n, N}(j_N(\tau))}{\prod_{r \in \mathbb Z \atop |r| < 2} H_{r^2 - 4, N}(j_N(\tau))} \]
have the same divisor on $Y_0(N)$.
\end{claim'}

\begin{proof}[Proof of Claim 1]
We need to exclude the fixed points $\tau_\alpha \in R$ of all elliptic elements $\bar{\alpha} \in [\bar{\alpha}_{\sqrt{n}, 0}] = \sqrt{n} \, \overline{\Gamma}_0(N)$. We infer from the proof of Claim 1 in Theorem \ref{theorem_main1} that both of
\[ \prod_{a|n, \, 0 \leq b < n/a \atop (a, b) \neq (\sqrt{n}, 0)} (j_N(\tau) \ - \ j_N(\alpha_{a, b}(\tau))) \quad \mbox{and} \quad \prod_{\bar{\alpha} \in \overline{\mathrm{M}}_{n, N}^{\mathrm{ell}}(R) - \sqrt{n} \, \overline{\Gamma}^{\mathrm{ell}}_0(N)(R)} (j_N(\tau) - j_N(\tau_\alpha))^{1/|\overline{\Gamma}_0(N)_{\tau_\alpha}|} \]
have the same divisor on $Y_0(N)$, where $\overline{\Gamma}^{\mathrm{ell}}_0(N)(R)$ denotes the subset of $\overline{\Gamma}_0(N)$ consisting of all elliptic elements whose fixed points on $\mathbb H$ are contained in $R$. Now notice that the fixed points of all elliptic elements of $\sqrt{n} \, \overline{\Gamma}^{\mathrm{ell}}_0(N)(R)$ are the same as the fixed points of those of $\overline{\Gamma}^{\mathrm{ell}}_0(N)(R)$. Since $\overline{\Gamma}^{\mathrm{ell}}_0(N)(R) = \overline{\mathrm{M}}_{1, N}^{\mathrm{ell}}(R)$, we see by Lemma \ref{lemma_qbar} that both of
\[ \prod_{\bar{\alpha} \in \sqrt{n} \, \overline{\Gamma}^{\mathrm{ell}}_0(N)(R)} (j_N(\tau) - j_N(\tau_\alpha))^{1/|\overline{\Gamma}_0(N)_{\tau_\alpha}|} \qquad \mbox{and} \qquad \prod_{r \in \mathbb Z \atop |r| < 2} H_{r^2 - 4, N}(j_N(\tau)) \]
have the same divisor on $Y_0(N)$, from which our assertion follows.
\end{proof}

Now we compute their divisors at cusps.

\begin{claim'}
Both of
\[ \prod_{a|n, \, 0 \leq b < n/a \atop (a, b) \neq (\sqrt{n}, 0)} (j_N(\tau) \ - \ j_N(\alpha_{a, b}(\tau))) \qquad \mbox{and} \qquad \prod_{u/v \in S'-\{ 1/N \}} \big(j_N(\tau) \, - \, j_N(u/v)\big)^{\nu_{u/v, n, N} - 1} \]
have the same divisor on $X_0(N) - Y_0(N) - \{ \infty \}$.
\end{claim'}

\begin{proof}[Proof of Claim 2]
We employ the same notation as in the proof of Claim 2 in Theorem \ref{theorem_main1}. Suppose that $\alpha_{a, b}(u/v) \sim u/v$ under $\Gamma_0(N)$. Then $\alpha_{a, b}(u/v) = \gamma(u/v)$ for some $\gamma \in \Gamma_0(N)$, and so $\rho^{-1} \gamma^{-1} \alpha_{a, b} \rho$ must be of the form $(\begin{smallmatrix} A & B \\ 0 & D \end{smallmatrix})$ because it sends $\infty$ to $\infty$. Thus we see that
\begin{eqnarray*}
(j_N \circ \alpha_{a, b} \circ \rho)(\tau) & = & (j_N \circ \rho)\Big(\frac{A\tau + B}{D}\Big) \\
& = & j_N(u/v) \ + \ \sum_{m = 1}^\infty c_m \zeta_h^{Bm / D} q_h^{Am / D},
\end{eqnarray*}
from which we infer that
\[ d_1 q_{h'} \ = \ c_1 \zeta_h^{B/D} q_h^{A/D}. \]

Assume that $d_1 q_{h'} = c_1 q_h$. Then we have $A = D$ and $B = Dhk$ for some $k \in \mathbb Z$. Since $AD = n$, we see that $A = D = \sqrt{n}$ and $B = \sqrt{n}hk$, whence
\begin{eqnarray*}
\alpha_{a, b} \ = \ \sqrt{n} \gamma \rho (\begin{smallmatrix} 1 & hk \\ 0 & 1 \end{smallmatrix}) \rho^{-1} & \Longrightarrow & \sqrt{n} \, \big| \, a, \ \sqrt{n} \, \big| \, b, \ \sqrt{n} \, \big| \, n/a \\
& \Longrightarrow & a = \sqrt{n}, \ b = 0.
\end{eqnarray*}
However, the case that $a = \sqrt{n}$ and $b = 0$ does not concern us. So we have shown that $c_1q_h - d_1 q_{h'} \neq 0$, and hence its order of zeros at $u/v$ is
\begin{eqnarray*}
& & h \cdot \sum_{a | n, \, 0 \leq b < n/a, \, (a, b) \neq (\sqrt{n}, 0) \atop n \equiv (au + bv, n/a)^2 \, \bmod {(v, N/v)}} \mathrm{min} \{ 1/h, \ 1/h' \} \\
& = & h \cdot \sum_{a | n, \, 0 \leq b < n/a \atop n \equiv (au + bv, n/a)^2 \, \bmod {(v, N/v)}} \mathrm{min} \{ 1/h, \ 1/h' \} \ - \ 1 \\
& = & \nu_{u/v, n, N} \ - \ 1.
\end{eqnarray*}
\end{proof}

This shows the identity (1). The identity (2) can be obtained by equating the degrees of both sides of (1) with the help of $|S| = \sum_{d|N} \varphi((d, N/d))$. Finally we prove (3). By the same computation as in the proof of Corollary \ref{corollary_main1} (2), we have
\begin{eqnarray*}
& & \prod_{a|n, \, 0 \leq b < n/a \atop (a, b) \neq (\sqrt{n}, 0)} (j_N(\tau) \ - \ j_N(\alpha_{a, b}(\tau))) \\
& = & (-1)^{\frac{1}{2}(\sigma_0(n) - 1)} \sqrt{n} \, q^{-\sum_{a|n} \mathrm{max} \{ a, n/a \} + 1} \big( 1 \ - \ \big( \sum_{a|n \atop a \neq \sqrt{n}} \varepsilon_a \big)q \ + \ \mathrm{O}(q^{1 + \frac{1}{n}}) \big) \\
& = & (-1)^{\frac{1}{2}(\sigma_0(n) - 1)} \sqrt{n} \, q^{-\sum_{a|n} \mathrm{max} \{ a, n/a \} + 1} \big( 1 \ + \ \mathrm{O}(q^{1 + \frac{1}{n}}) \big)
\end{eqnarray*}
Here we have used the fact that $\varepsilon_a = 0$ because it is impossible that both $n$ and $4n + 1$ are perfect squares at the same time. On the other hand, we compute
\begin{eqnarray*}
\frac{\prod_{|r| < 2 \sqrt{n}} H_{r^2 - 4n, N}(j_N(\tau))}{\prod_{|r| < 2} H_{r^2 - 4, N}(j_N(\tau))} & = & q^{-\sum_{|r|<2\sqrt{n}} H(r^2 - 4n, N) + \sum_{|r|<2} H(r^2 - 4, N)} \\
& & \ \times \ \big( 1 - \big( \sum_{|r| < 2\sqrt{n}} t(r^2 - 4n, N) - \sum_{|r| < 2} t(r^2 - 4, N) \big)q + \mathrm{O}(q^2) \big)
\end{eqnarray*}
and
\begin{eqnarray*}
\prod_{s \in S \atop s \not \sim \infty} \big(j_N(\tau) \, - \, j_N(s)\big)^{\nu_{s, n, N} - 1} & = & q^{-\sum_{s \in S \atop s \not\sim \infty} (\nu_{s, n, N} - 1)} \big( 1 - \sum_{s \in S \atop s \not\sim \infty} (\nu_{s, n, N} - 1)j_N(s) q + \mathrm{O}(q^2) \big).
\end{eqnarray*}
Comparing their coefficients, we obtain our assertion.
\end{proof}

\section{The $\Gamma$-equivalence of binary quadratic forms}

We denote by $\mathrm{QF}$ the set of all primitive positive-definite binary quadratic forms $Q(x, y) = ax^2 + bxy + cy^2$, and denote by $\mathrm{QT}$ the set of all points $\tau \in \mathbb H$ such that $[\mathbb Q(\tau) : \mathbb Q] = 2$. Then any congruence subgroup $\Gamma$ of $\mathrm{SL}_2(\mathbb Z)$ acts on both $\mathrm{QF}$ and $\mathrm{QT}$ in a natural way as
\begin{eqnarray*}
(Q \cdot \gamma)(x, y) & = & (x \ y) \tensor*[^t]\gamma{} A_Q \gamma \left(\begin{smallmatrix} x \\ y \end{smallmatrix}\right), \\
\gamma(\tau) & = & \frac{a\tau + b}{c \tau + d},
\end{eqnarray*}
where $Q \in \mathrm{QF}$, $\gamma = \left(\begin{smallmatrix} a & b \\ c & d \end{smallmatrix}\right) \in \Gamma$, $\tau \in \mathrm{QT}$, and $A_Q$ is a symmetric matrix associated with $Q$. By definition, a form $Q'$ is $\Gamma$-equivalent to a form $Q$ if and only if $Q' = Q\cdot \gamma$ for some $\gamma \in \Gamma$.

Given a form $Q \in \mathrm{QF}$, let $\tau_Q$ denote the unique solution of $Q(x, 1) = 0$ in $\mathbb H$. Then it is not difficult to see that the map defined by $Q \mapsto \tau_Q$ is a one-to-one correspondence between $\mathrm{QF}$ and $\mathrm{QT}$, and moreover
\[ \tau_{Q \cdot \gamma} \, = \, \gamma^{-1}(\tau_Q). \]

\begin{definition}
Let $\mathfrak F_\Gamma$ be a fundamental region for a congruence subgroup $\Gamma$.

(1) A form $Q \in \mathrm{QF}$ is said to be $\Gamma$-reduced if $\tau_Q \in \mathfrak F_\Gamma$. The set of all $\Gamma$-reduced forms in $\mathrm{QF}$ is denoted by $\Gamma\text{-}\mathrm{RF}$.

(2) Given a negative integer $D \equiv 0, 1 \pmod 4$, we denote by $\mathrm{QF}(D)$ the set of all forms in $\mathrm{QF}$ of discriminant $D$, and denote by $\Gamma\text{-}\mathrm{RF}(D)$ the set of all $\Gamma$-reduced forms in $\mathrm{QF}(D)$.
\end{definition}

It is worthwhile to remark that the definition of $\Gamma$-reduced forms depends on the choice of the fundamental region $\mathfrak F_\Gamma$.

\begin{example}
As is well known, a fundamental region for $\mathrm{SL}_2(\mathbb Z)$ is
\begin{eqnarray*}
\mathfrak F_{\mathrm{SL}_2(\mathbb Z)} & = & \big\{ \tau \in \mathbb H \, \big| \, |\mathrm{Re}(\tau)| \leq 1/2, \ |\tau| \geq 1, \\
& & \qquad \quad \ \ \big( |\mathrm{Re}(\tau)| = 1/2 \mbox{ or } |\tau| = 1 \big) \Longrightarrow \mathrm{Re}(\tau) \leq 0 \big\}.
\end{eqnarray*}
With respect to this choice of $\mathfrak F_{\mathrm{SL}_2(\mathbb Z)}$, a form $ax^2 + bxy + cy^2 \in \mathrm{QF}$ is $\mathrm{SL}_2(\mathbb Z)$-reduced if and only if it satisfies

(1) $|b| \leq a \leq c$

(2) $(|b| = a \mbox{ or } a = c) \Longrightarrow b \geq 0$.\\
Notice that this coincides with the usual definition of reduced forms for the proper equivalence. This relation between reduced forms and fundamental domains can also be found in \cite[Section 5.3.1]{Coh} and \cite[Exercise 11.4]{Cox}.
\end{example}

\begin{example}\label{Example - 2, 3, 4}
For $N = 2, 3, 4$ it is known that
\begin{eqnarray*}
\mathfrak F_{\Gamma_0(N)} & = & \big\{ \tau \in \mathbb H \, \big| \, |\mathrm{Re}(\tau)| \leq 1/2, \ |\tau - 1/N| \geq 1/N, \ |\tau + 1/N| \geq 1/N, \\
& & \qquad \quad \ \ \big( |\mathrm{Re}(\tau)| = 1/2 \mbox{ or } |\tau - 1/N| = 1/N \mbox{ or } |\tau + 1/N| = 1/N \big) \Longrightarrow \mathrm{Re}(\tau) \leq 0 \big\}
\end{eqnarray*}
(see also the remark at the end of Section 4). Therefore $ax^2 + bxy + cy^2$ is defined to be $\Gamma_0(N)$-reduced if

(1) $|b| \leq a$, $|b| \leq Nc$

(2) $(|b| = a \mbox{ or } |b| = Nc) \Longrightarrow b \geq 0$.\\
Notice that the condition (1) implies the finiteness of $\Gamma_0(N)\text{-}\mathrm{RF}(D)$. More precisely, when $N = 2$ or $3$, the finiteness follows immediately from the fact that
\begin{eqnarray*}
\frac{4-N}{N}b^2 \leq 4ac - b^2 = -D & \Longrightarrow & |b| \leq \sqrt{\frac{-ND}{4-N}}, \\
4ac = b^2 - D \leq \frac{-4D}{4 - N} & \Longrightarrow & ac \leq \frac{-D}{4 - N},
\end{eqnarray*}
since $b^2 \leq Nac$. For $N = 4$, we consider the following three cases:

\medskip

\underline{Case 1} $a > 4c$

We see that $-D = 4ac - b^2 \geq 4ac - 16c^2 = 4c(a - 4c) \geq 4c$. Hence there are at most finitely many triples $(a, b, c)$ such that $D = b^2 - 4ac$, $c \leq -D/4$, and $|b| \leq 4c$.

\medskip

\underline{Case 2} $a < 4c$

Similarly, the finiteness follows from $-D = 4ac - b^2 \geq 4ac - a^2 = a(4c - a) \geq a$.

\medskip

\underline{Case 3} $a = 4c$

It follows from $-D = (a-b)(a+b)$.

\medskip

Using these, we easily compute that
\begin{eqnarray*}
\Gamma_0(2)\text{-}\mathrm{RF}(-3) & = & \{ x^2 + xy + y^2 \}, \\
\Gamma_0(2)\text{-}\mathrm{RF}(-4) & = & \{ x^2 + y^2, \, 2x^2 + 2xy + y^2 \}, \\
\Gamma_0(2)\text{-}\mathrm{RF}(-7) & = & \{ x^2 + xy + 2y^2, \, 2x^2 \pm xy + y^2 \}, \\
\Gamma_0(2)\text{-}\mathrm{RF}(-8) & = & \{ x^2 + 2y^2, \, 2x^2 + y^2, \, 3x^2 + 2xy + y^2 \}
\end{eqnarray*}
and
\begin{eqnarray*}
\Gamma_0(3)\text{-}\mathrm{RF}(-3) & = & \{ x^2 + xy + y^2, \, 3x^2 + 3xy + y^2 \}, \\
\Gamma_0(3)\text{-}\mathrm{RF}(-4) & = & \{ x^2 + y^2, \, 2x^2 + 2xy + y^2 \}, \\
\Gamma_0(3)\text{-}\mathrm{RF}(-7) & = & \{ x^2 + xy + 2y^2, \, 2x^2 \pm xy + y^2, \, 4x^2 + 3xy + y^2 \}, \\
\Gamma_0(3)\text{-}\mathrm{RF}(-8) & = & \{ x^2 + 2y^2, \, 2x^2 + y^2, \, 3x^2 \pm 2xy + y^2 \}
\end{eqnarray*}
and
\begin{eqnarray*}
\Gamma_0(4)\text{-}\mathrm{RF}(-3) & = & \{ x^2 + xy + y^2, \, 3x^2 + 3xy + y^2 \}, \\
\Gamma_0(4)\text{-}\mathrm{RF}(-4) & = & \{ x^2 + y^2, \, 2x^2 + 2xy + y^2, \, 5x^2 + 4xy + y^2 \}, \\
\Gamma_0(4)\text{-}\mathrm{RF}(-7) & = & \{ x^2 + xy + 2y^2, \, 2x^2 \pm xy + y^2, \, 4x^2 \pm 3xy + y^2, \, 7x^2 + 7xy + 2y^2 \}, \\
\Gamma_0(4)\text{-}\mathrm{RF}(-8) & = & \{ x^2 + 2y^2, \, 2x^2 + y^2, \, 3x^2 \pm 2xy + y^2, \, 6x^2 + 4xy + y^2, \, 9x^2 + 8xy + 2y^2 \}.
\end{eqnarray*}
\end{example}

\begin{example}\label{Example - p}
Let $p \geq 5$ be a prime number. Then a form $ax^2 + bxy + cy^2 \in \mathrm{QF}$ is defined to be $\Gamma_0(p)$-reduced if it satisfies the following properties:

\begin{enumerate}
\item $|b| \leq a$

\item $|b| \leq \frac{p^2c + (k^2 - 1)a}{p \cdot |k|}$ for all $k \in S_p$

\item $(|b| = a \mbox{ or } |b| = pc) \Longrightarrow b \geq 0$

\item $b = - \frac{p^2c + (k^2 - 1)a}{pk}$ with $k \in E_p^{(2)}$ $\Longrightarrow$ $b \geq -\frac{2ka}{p}$

\item $b = - \frac{p^2c + (k^2 - 1)a}{pk}$ with $k \in S_p - (\{ \pm 1 \} \cup E_p^{(2)})$ $\Longrightarrow$ $b \geq -\frac{(2k_{(2)} + 1)a}{p}$

\item $b^2 - 4ac = - \frac{3a^2}{p^2}$ $\Longrightarrow$ $b \neq \frac{(1 - 2k)a}{p}$ for all $k \in S_p - (\{ 1 \} \cup E_p^{(3)})$ with $k \neq k_{(3)}$,
\end{enumerate}
where the definitions of $S_p$, $E_p^{(2)}$, $k_{(2)}$, $E_p^{(3)}$, and $k_{(3)}$ are given in Section 4. The definition of $\Gamma_0(p)$-reduction above corresponds to the fundamental region for $\Gamma_0(p)$ described explicitly in Section 4.

We can see that the conditions (1) and (2) are good enough to imply the finiteness of $\Gamma_0(p)\text{-}\mathrm{RF}(D)$. Note that
\begin{eqnarray*}
& & |b| \leq \frac{p^2c + (k^2 - 1)a}{p \cdot |k|} \ \mbox{ for all } k \in S_p \\
& \Longrightarrow & \big|\tau_Q - k/p\big| \geq 1/p \ \mbox{ for all } k \in S_p, \mbox{ where } \tau_Q = (-b + \sqrt{D})/2a \\
& \Longrightarrow & \mathrm{Im}(\tau_Q) \geq \sqrt{3}/2p \mbox{ or } \big(|\mathrm{Re}(\tau_Q)| \leq 1/2p, \ |\tau_Q - 1/p| \geq 1/p, \ |\tau_Q + 1/p| \geq 1/p \big) \\
& \Longrightarrow & a \leq \sqrt{-D/3} \cdot p \mbox{ or } \big(|b| \leq a/p, \ |b| \leq pc \big).
\end{eqnarray*}

Fix a negative discriminant $D$.

\medskip

\underline{Case 1} $|b| \leq a$, $a \leq \sqrt{-D/3} \cdot p$

Clearly, there are at most finitely many triples $(a, b, c)$.

\medskip

\underline{Case 2} $|b| \leq a/p$, $|b| \leq pc$

Since $b^2 = |b| \cdot |b| \leq \frac{a}{p} \cdot pc = ac$, the finiteness follows immediately from
\begin{eqnarray*}
3b^2 \leq 4ac - b^2 = -D & \Longrightarrow & |b| \leq \sqrt{-D/3}, \\
4ac = b^2 - D \leq -4D/3 & \Longrightarrow & ac \leq -D/3.
\end{eqnarray*}

According to our definition of $\Gamma_0(p)$-reduced forms above, a form $ax^2 + bxy + cy^2 \in \mathrm{QF}$ is $\Gamma_0(5)$-reduced if and only if

\begin{enumerate}
\item $|b| \leq a$

\item $|b| \leq 5c$

\item $(|b| = a \mbox{ or } |b| = 5c) \Longrightarrow b \geq 0$

\item $|b| \leq \frac{3a + 25c}{10}$

\item $b = \frac{3a + 25c}{10}$ $\Longrightarrow$ $a \leq 5c$

\item $b = -\frac{3a + 25c}{10}$ $\Longrightarrow$ $a \geq 5c$

\item $b^2 - 4ac = - \frac{3a^2}{25}$ $\Longrightarrow$ $b \neq \pm \frac{3a}{5}$.
\end{enumerate}
Under this definition, we easily compute that
\begin{eqnarray*}
\Gamma_0(5)\text{-}\mathrm{RF}(-3) & = & \{ x^2 + xy + y^2, \, 3x^2 + 3xy + y^2 \}, \\
\Gamma_0(5)\text{-}\mathrm{RF}(-4) & = & \{ x^2 + y^2, \, 2x^2 + 2xy + y^2, \, 5x^2 \pm 4xy + y^2 \}, \\
\Gamma_0(5)\text{-}\mathrm{RF}(-7) & = & \{ x^2 + xy + 2y^2, \, 2x^2 \pm xy + y^2, \, 4x^2 \pm 3xy + y^2, \, 7x^2 + 7xy + 2y^2 \}, \\
\Gamma_0(5)\text{-}\mathrm{RF}(-8) & = & \{ x^2 + 2y^2, \, 2x^2 + y^2, \, 3x^2 \pm 2 xy + y^2, \, 6x^2 \pm 4xy + y^2 \}. \\
\end{eqnarray*}
\end{example}

In what follows, we prove the finiteness of $\Gamma$-reduced forms of a given discriminant for any congruence subgroup $\Gamma$ of $\mathrm{SL}_2(\mathbb Z)$.

\begin{theorem}\label{Theorem - reduced}
Let $\Gamma\text{-}\mathrm{RF}(D)$ be the set of all $\Gamma$-reduced forms of discriminant $D$ with respect to $\mathfrak F_\Gamma$. Then we have the following.

(1) Any two distinct $\Gamma$-reduced forms in $\Gamma\text{-}\mathrm{RF}(D)$ are not $\Gamma$-equivalent.

(2) Every form $Q \in \mathrm{QF}(D)$ is $\Gamma$-equivalent to a unique $\Gamma$-reduced form in $\Gamma\text{-}\mathrm{RF}(D)$.

(3) The cardinality of $\Gamma\text{-}\mathrm{RF}(D)$ is independent of the choice of $\mathfrak F_\Gamma$.

(4) The cardinality $|\Gamma\text{-}\mathrm{RF}(D)|$ is finite.
\end{theorem}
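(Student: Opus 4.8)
The plan is to reduce all four assertions to the bijection $Q \mapsto \tau_Q$ between $\mathrm{QF}$ and $\mathrm{QT}$, the transformation rule $\tau_{Q \cdot \gamma} = \gamma^{-1}(\tau_Q)$, and the single property of a fundamental region that is actually needed here: every $\Gamma$-orbit in $\mathbb H$ meets $\mathfrak F_\Gamma$ in exactly one point. Since any $\gamma \in \Gamma \subseteq \mathrm{SL}_2(\mathbb Z)$ acts on $\mathrm{QF}$ preserving primitivity, positive-definiteness and discriminant, this bijection carries the $\Gamma$-action on $\mathrm{QF}(D)$ to the $\Gamma$-action on $\{\tau_Q : Q \in \mathrm{QF}(D)\} \subseteq \mathrm{QT}$, and it carries $\Gamma$-reducedness of $Q$ to the condition $\tau_Q \in \mathfrak F_\Gamma$. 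Everything below is then transport of structure through this dictionary.

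For (1): if $Q, Q' \in \Gamma\text{-}\mathrm{RF}(D)$ satisfy $Q' = Q\cdot\gamma$ with $\gamma \in \Gamma$, then $\tau_{Q'} = \gamma^{-1}(\tau_Q)$, so $\tau_Q$ and $\tau_{Q'}$ are $\Gamma$-equivalent points both lying in $\mathfrak F_\Gamma$; hence $\tau_Q = \tau_{Q'}$, and injectivity of $Q \mapsto \tau_Q$ forces $Q = Q'$. For (2): given $Q \in \mathrm{QF}(D)$, pick the unique $\tau^\ast \in \mathfrak F_\Gamma$ with $\tau^\ast = \gamma(\tau_Q)$ for some $\gamma \in \Gamma$, and set $Q^\ast := Q\cdot\gamma^{-1}$; then $Q^\ast \in \mathrm{QF}(D)$ and $\tau_{Q^\ast} = \gamma(\tau_Q) = \tau^\ast \in \mathfrak F_\Gamma$, so $Q^\ast \in \Gamma\text{-}\mathrm{RF}(D)$ is $\Gamma$-equivalent to $Q$, and uniqueness is exactly (1). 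Part (3) is then a formality: by (1) and (2) the map $Q \mapsto [Q]$ is a bijection from $\Gamma\text{-}\mathrm{RF}(D)$ onto the set $\mathrm{QF}(D)/\Gamma$ of $\Gamma$-equivalence classes, and the cardinality of the latter involves no choice of $\mathfrak F_\Gamma$.

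For (4) I would bound $|\mathrm{QF}(D)/\Gamma|$ from above. Writing $\mathrm{SL}_2(\mathbb Z)$ as a finite union of left cosets of $\Gamma$ shows that each $\mathrm{SL}_2(\mathbb Z)$-equivalence class of forms of discriminant $D$ splits into at most $[\mathrm{SL}_2(\mathbb Z):\Gamma]$ many $\Gamma$-equivalence classes; this index is finite because $\Gamma$ is a congruence subgroup, and the number of $\mathrm{SL}_2(\mathbb Z)$-classes of primitive positive-definite forms of discriminant $D$ is the classical form class number $h(D) < \infty$. Hence $|\Gamma\text{-}\mathrm{RF}(D)| = |\mathrm{QF}(D)/\Gamma| \le [\mathrm{SL}_2(\mathbb Z):\Gamma]\, h(D) < \infty$.

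There is essentially no obstacle once the dictionary $Q \leftrightarrow \tau_Q$ is in place: the whole theorem is a transport argument. The one point needing care is to use $\mathfrak F_\Gamma$ in its exact form, one representative per orbit, as in the carefully stated boundary conditions of the examples, rather than as the closure of an open fundamental domain. If one prefers a self-contained finiteness proof in (4) rather than quoting $h(D)$, one may take $\mathfrak F_\Gamma$ to be a union of $[\mathrm{SL}_2(\mathbb Z):\Gamma]$ translates of $\mathfrak F_{\mathrm{SL}_2(\mathbb Z)}$ — legitimate by (3) — and observe that $\mathfrak F_{\mathrm{SL}_2(\mathbb Z)}$ already contains only the finitely many $\mathrm{SL}_2(\mathbb Z)$-reduced forms of discriminant $D$; the mild boundary bookkeeping in assembling such a region is then the only fiddly part.
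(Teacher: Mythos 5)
Your proposal is correct, and parts (1)--(3) are essentially the paper's own argument: (1) and (2) are word-for-word the same transport through $Q \mapsto \tau_Q$, and your (3) (both sets biject with $\mathrm{QF}(D)/\Gamma$) is a marginally cleaner packaging of the paper's explicit map $f\colon \Gamma\text{-}\mathrm{RF}(D) \to \Gamma\text{-}\mathrm{RF}(D)'$ sending a reduced form to its unique reduced equivalent. The only genuine divergence is in (4). You bound $|\mathrm{QF}(D)/\Gamma| \le [\mathrm{SL}_2(\mathbb Z):\Gamma]\cdot h(D)$ by decomposing $\mathrm{SL}_2(\mathbb Z)$ into left cosets $\sigma_i\Gamma$, so that each $\mathrm{SL}_2(\mathbb Z)$-orbit $Q\cdot\mathrm{SL}_2(\mathbb Z) = \bigcup_i (Q\cdot\sigma_i)\cdot\Gamma$ splits into at most $n$ $\Gamma$-orbits; this is correct and quotes only the classical finiteness of the form class number. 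The paper instead uses (3) to replace $\mathfrak F_\Gamma$ by one \emph{contained in} $\bigcup_{i=1}^n \gamma_i^{-1}(\mathfrak F_{\mathrm{SL}_2(\mathbb Z)})$ (containment, not equality, so the boundary bookkeeping you flag as fiddly never arises) and then observes $\Gamma\text{-}\mathrm{RF}(D) \subset \bigcup_i \{ Q'\cdot\gamma_i : Q' \in \mathrm{SL}_2(\mathbb Z)\text{-}\mathrm{RF}(D)\}$. The two routes are logically the same counting bound seen from the two sides of the dictionary: yours stays on the orbit-space side and needs no appeal to (3); the paper's stays on the fundamental-region side and exhibits the reduced forms concretely as coset translates of the classically reduced ones, which is closer in spirit to the explicit computations it performs in Examples 3.2 and 3.3.
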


\begin{proof}
(1) Suppose that $Q, Q' \in \Gamma\text{-}\mathrm{RF}(D)$ are $\Gamma$-equivalent to each other. Since $Q' = Q \cdot \gamma$ for some $\gamma \in \Gamma$, we have $\tau_{Q'} = \gamma^{-1}(\tau_Q)$. This says that $\tau_Q, \tau_{Q'} \in \mathfrak F_\Gamma$ are in the same $\Gamma$-orbit, from which we deduce that $\tau_Q = \tau_{Q'}$. Appealing to the one-to-one correspondence between $\mathrm{QF}$ and $\mathrm{QT}$, it follows that $Q =Q'$.

(2) Given a form $Q \in \mathrm{QF}(D)$, there exists $\gamma \in \Gamma$ such that $\tau_{Q \cdot \gamma} = \gamma^{-1}(\tau_Q)$ is contained in $\mathfrak F_\Gamma$. By definition, $Q$ is $\Gamma$-equivalent to $Q \cdot \gamma \in \Gamma\text{-}\mathrm{RF}(D)$. The uniqueness is an immediate consequence of (1).

(3) Let $\Gamma\text{-}\mathrm{RF}(D)$ and $\Gamma\text{-}\mathrm{RF}(D)'$ be the sets of $\Gamma$-reduced forms in $\mathrm{QF}(D)$ with respect to $\mathfrak F_\Gamma$ and $\mathfrak F_\Gamma'$, respectively. According to (2), there is a function $f:\Gamma\text{-}\mathrm{RF}(D) \rightarrow \Gamma\text{-}\mathrm{RF}(D)'$ by mapping $Q$ to a unique $Q'$ in $\Gamma\text{-}\mathrm{RF}(D)'$ which is $\Gamma$-equivalent to $Q$. Note that $f$ is injective by (1). For if $f(Q_1) = f(Q_2)$, then $Q_1, Q_2 \in \Gamma\text{-}\mathrm{RF}(D)$ are $\Gamma$-equivalent, so we have $Q_1 = Q_2$ by (1). Now the symmetry implies our desired assertion.

(4) Let $\gamma_1, \ldots, \gamma_n$ be all the distinct left coset representatives of $\Gamma$ in $\mathrm{SL}_2(\mathbb Z)$, where $n = [\mathrm{SL}_2(\mathbb Z) : \Gamma]$. Then we can choose $\mathfrak F_\Gamma$ so that $\mathfrak F_\Gamma$ is contained in
\[ \bigcup_{i=1}^n \gamma_i^{-1}(\mathfrak F_{\mathrm{SL}_2(\mathbb Z)}). \]
Under this choice of $\mathfrak F_\Gamma$ we have
\begin{eqnarray*}
\Gamma\text{-}\mathrm{RF}(D) & = & \{ Q \in \mathrm{QF}(D) \, | \, \tau_Q \in \mathfrak F_\Gamma \} \\
& \subset & \{ Q \in \mathrm{QF}(D) \, | \, \gamma_i(\tau_Q) \in \mathfrak F_{\mathrm{SL}_2(\mathbb Z)} \mbox{ for some } i = 1, \ldots, n \} \\
& = & \{ Q \in \mathrm{QF}(D) \, | \, Q \cdot \gamma_i^{-1} \mbox{ is $\mathrm{SL}_2(\mathbb Z)$-reduced for some } i = 1, \ldots, n \} \\
& = & \bigcup_{i=1}^n \{ Q' \cdot \gamma_i \in \mathrm{QF}(D) \, | \, Q' \in \mathrm{SL}_2(\mathbb Z)\text{-}\mathrm{RF}(D) \}.
\end{eqnarray*}
Because $|\mathrm{SL}_2(\mathbb Z)\text{-}\mathrm{RF}(D)|$ is finite, we can derive that $|\Gamma\text{-}\mathrm{RF}(D)|$ is also finite.
\end{proof}

\section{The fundamental region for $\Gamma_0(p)$}

Throughout this section, let $p \geq 5$ denote a prime number. The purpose of this section is to construct the fundamental region $\mathfrak F_{\Gamma_0(p)}$ for $\Gamma_0(p)$ which satisfies the following properties:

\begin{enumerate}
\item $|\mathrm{Re}(\tau)| \leq 1/2$ for all $\tau \in \mathfrak F_{\Gamma_0(p)}$

\item the imaginary part of $\tau \in \mathfrak F_{\Gamma_0(p)}$ is maximal in the orbit $\Gamma_0(p)\tau$

\item the real part of $\tau \in \mathfrak F_{\Gamma_0(p)}$ is minimal among the points $\tau' \in \Gamma_0(p)\tau$ with $|\mathrm{Re}(\tau')| \leq 1/2$

\item for any $\tau \in \mathbb H$, there exists a unique point $\tau' \in \mathfrak F_{\Gamma_0(p)}$ such that $\tau' \in \Gamma_0(p) \tau$.
\end{enumerate}

Given a congruence subgroup, we can explicitly construct its open fundamental region consisting of points whose imaginary part is maximal in its orbit. This can be done by applying the method given in Ferenbaugh's paper \cite[Section 3]{Fer}. For a matrix $\gamma = (\begin{smallmatrix} a & b \\ c & d \end{smallmatrix})$, we write $a_\gamma := a$, $b_\gamma := b$, and so on. As in his paper, let
\begin{eqnarray*}
\mathrm{arc}(\gamma) & = & \{ \tau \in \mathbb H \, \big| \, |\tau - a_\gamma/c_\gamma| = 1/|c_\gamma| \}, \\
\mathrm{inside}(\gamma) & = & \{ \tau \in \mathbb H \, \big| \, |\tau - a_\gamma/c_\gamma| < 1/|c_\gamma| \}, \\
\mathrm{outside}(\gamma) & = & \{ \tau \in \mathbb H \, \big| \, |\tau - a_\gamma/c_\gamma| > 1/|c_\gamma| \}
\end{eqnarray*}
for any $\gamma \in \mathrm{SL}_2(\mathbb Z)$ with $c_\gamma \neq 0$. If $\gamma = \pm(\begin{smallmatrix} 1 & b \\ 0 & 1 \end{smallmatrix})$ with $b > 0$, then we define
\begin{eqnarray*}
\mathrm{arc}(\gamma) & = & \{ \tau \in \mathbb H \, \big| \, \mathrm{Re}(\tau) = b/2 \}, \\
\mathrm{inside}(\gamma) & = & \{ \tau \in \mathbb H \, \big| \, \mathrm{Re}(\tau) > b/2 \}, \\
\mathrm{outside}(\gamma) & = & \{ \tau \in \mathbb H \, \big| \, \mathrm{Re}(\tau) < b/2 \}.
\end{eqnarray*}
On the other hand, if $\gamma = \pm(\begin{smallmatrix} 1 & b \\ 0 & 1 \end{smallmatrix})$ with $b < 0$, then we define $\mathrm{arc}(\gamma)$ in the same way and reverse the definitions of $\mathrm{inside}(\gamma)$ and $\mathrm{outside}(\gamma)$.

An open fundamental region for a congruence subgroup $\Gamma$ of $\mathrm{SL}_2(\mathbb Z)$ is defined to be an open subset $R$ of $\mathbb H$ with the properties:
\begin{enumerate}
\item there do not exist $\gamma \in \Gamma$ and $\tau, \tau' \in R$ such that $\tau \neq \tau'$ and $\tau = \gamma(\tau')$

\item for any $\tau \in \mathbb H$, there exists $\gamma \in \Gamma$ such that $\gamma(\tau) \in \bar{R}$.
\end{enumerate}

\begin{theorem}
Let $\Gamma$ be a congruence subgroup containing $(\begin{smallmatrix} 1 & 1 \\ 0 & 1 \end{smallmatrix})$. Then
\[ R_\Gamma \ := \ \bigcap_{\gamma \in \Gamma - \{ \pm I \}} \mathrm{outside}(\gamma) \]
is an open fundamental region for $\Gamma$ such that

(1) $|\mathrm{Re}(\tau)| \leq 1/2$ for all $\tau \in R_\Gamma$

(2) the imaginary part of $\tau \in \overline{R_\Gamma}$ is maximal in the orbit $\Gamma \tau$.
\end{theorem}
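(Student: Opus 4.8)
The plan is to convert every statement ``$\tau\in\mathrm{outside}(\gamma)$'' into an inequality between imaginary parts, and then read off all the assertions from this dictionary together with the discreteness of the lattice $\mathbb Z\tau+\mathbb Z$. For $\gamma=(\begin{smallmatrix} a & b \\ c & d \end{smallmatrix})\in\mathrm{SL}_2(\mathbb Z)$ with $c\neq0$ one has $\mathrm{Im}(\gamma^{-1}\tau)=\mathrm{Im}(\tau)/|c\tau-a|^2$ and $|c\tau-a|=|c|\,|\tau-a/c|$, so $\tau\in\mathrm{outside}(\gamma)$ iff $\mathrm{Im}(\gamma^{-1}\tau)<\mathrm{Im}(\tau)$, and $\tau\in\mathrm{arc}(\gamma)$ iff $\mathrm{Im}(\gamma^{-1}\tau)=\mathrm{Im}(\tau)$. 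Since $c_\gamma=0$ forces $\gamma=\pm(\begin{smallmatrix} 1 & n \\ 0 & 1 \end{smallmatrix})$, every $\gamma\in\Gamma-\{\pm I\}$ is either such a translation or has $c_\gamma\neq0$; writing $\delta=\gamma^{-1}$ and using that inversion permutes $\Gamma$, one gets
\[
R_\Gamma \;=\; \{\,\tau\in\mathbb H : \mathrm{Im}(\delta\tau)<\mathrm{Im}(\tau)\ \text{for all }\delta\in\Gamma\text{ with }c_\delta\neq0\,\}\ \cap\ \{\,|\mathrm{Re}(\tau)|<\tfrac12\,\},
\]
the second set being $\bigcap_{n\neq0}\mathrm{outside}(\pm(\begin{smallmatrix} 1 & n \\ 0 & 1 \end{smallmatrix}))$, in which only $n=\pm1$ binds because $(\begin{smallmatrix} 1 & n \\ 0 & 1 \end{smallmatrix})\in\Gamma$ for all $n$. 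In particular (1) is immediate, even with strict inequality. Since $\mathrm{Im}(\delta\tau)=\mathrm{Im}(\tau)$ whenever $c_\delta=0$, the first set above is contained in the closed set $\{\tau:\mathrm{Im}(\delta\tau)\le\mathrm{Im}(\tau)\ \forall\delta\in\Gamma\}$, hence so is $\overline{R_\Gamma}$; this is exactly assertion (2).

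Next I would prove $R_\Gamma$ is open. The condition $\mathrm{Im}(\delta\tau)<\mathrm{Im}(\tau)$ depends only on the bottom row $(c_\delta,d_\delta)$, and two elements of $\Gamma$ with equal bottom rows differ by left multiplication by a power of $(\begin{smallmatrix} 1 & 1 \\ 0 & 1 \end{smallmatrix})\in\Gamma$, so the distinct conditions are indexed by such bottom rows. For a fixed $\tau_0\in R_\Gamma$ with $\mathrm{Im}(\tau_0)=y_0$ and every $\tau$ with $|\tau-\tau_0|<y_0/2$ one has $|c_\delta\tau+d_\delta|\ge|c_\delta|\,\mathrm{Im}(\tau)>2$ as soon as $|c_\delta|\ge4/y_0$; since $\mathbb Z\tau_0+\mathbb Z$ has only finitely many points of bounded norm, only finitely many bottom rows (together with $n=\pm1$) give conditions that can be violated near $\tau_0$. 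Each such condition is open and holds at $\tau_0$, so a neighbourhood of $\tau_0$ lies in $R_\Gamma$.

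It then remains to check the two defining properties of an open fundamental region. For injectivity, suppose $\tau,\ \tau'=\delta\tau\in R_\Gamma$ with $\delta\in\Gamma-\{\pm I\}$; applying $\tau\in\mathrm{outside}(\delta^{-1})$ and $\tau'\in\mathrm{outside}(\delta)$ in the dictionary, the case $c_\delta\neq0$ yields $\mathrm{Im}(\tau')<\mathrm{Im}(\tau)<\mathrm{Im}(\tau')$, impossible, so $\delta=\pm(\begin{smallmatrix} 1 & n \\ 0 & 1 \end{smallmatrix})$, whence $\mathrm{Re}(\tau')=\mathrm{Re}(\tau)+n$ with both real parts in $(-\tfrac12,\tfrac12)$, forcing $n=0$ and $\tau=\tau'$. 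For surjectivity onto $\overline{R_\Gamma}$, given $\tau_0$ the numbers $\mathrm{Im}(\delta\tau_0)=\mathrm{Im}(\tau_0)/|c_\delta\tau_0+d_\delta|^2$ are bounded above and the supremum is attained (again by discreteness of $\mathbb Z\tau_0+\mathbb Z$), say at $\delta_0$; after left multiplication by a power of $(\begin{smallmatrix} 1 & 1 \\ 0 & 1 \end{smallmatrix})$ we may assume $\tau_1:=\delta_0\tau_0$ satisfies $|\mathrm{Re}(\tau_1)|\le\tfrac12$, and in the boundary case $\mathrm{Re}(\tau_1)=-\tfrac12$, and by maximality $|c_\delta\tau_1+d_\delta|^2\ge1$ whenever $c_\delta\neq0$. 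Writing $\tau_1=x+iy$ and $\tau_{t,s}=(x+s)+ity$ with $t>1$, $s\ge0$, one computes for $c_\delta\neq0$ that
\[
|c_\delta\tau_{t,s}+d_\delta|^2-|c_\delta\tau_1+d_\delta|^2 \;=\; c_\delta^2(t^2-1)y^2 \;+\; c_\delta^2 s\,(2(x+d_\delta/c_\delta)+s),
\]
where the first term is a fixed positive quantity dominating the second once $s$ is small, uniformly over the finitely many $\delta$ with $|c_\delta\tau_1+d_\delta|$ near $1$ (the remaining $\delta$ having uniform slack as above). Taking $s=0$ if $|x|<\tfrac12$ and a suitably small $s>0$ otherwise, and letting $t\downarrow1$, produces points $\tau_{t,s}\in R_\Gamma$ converging to $\tau_1\in\overline{R_\Gamma}$.

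The main obstacle is the surjectivity step: one must first invoke the discreteness of $\Gamma$ to know that the orbit has a highest point, and then show that such a highest point --- which may lie on a bounding arc or on one of the lines $\mathrm{Re}(\tau)=\pm\tfrac12$, and so need not lie in the \emph{open} set $R_\Gamma$ --- is nonetheless a limit of interior points, which is what the explicit perturbation $\tau_1\mapsto(x+s)+ity$ is designed to do. The remaining parts are bookkeeping with the imaginary-part dictionary, and the whole construction runs parallel to the method of \cite[Section~3]{Fer}.
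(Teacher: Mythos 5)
Your proof is correct. Note that the paper itself supplies no argument for this theorem --- its ``proof'' is the single line ``See [Fer, Theorem 3.3] and its proof'' --- so there is nothing in the text to compare against line by line; what you have written is a self-contained reconstruction of Ferenbaugh's method, and it holds up. The key dictionary $\tau \in \mathrm{outside}(\gamma) \iff \mathrm{Im}(\gamma^{-1}\tau) < \mathrm{Im}(\tau)$ (for $c_\gamma \neq 0$) is exactly the right reduction: it makes (2) an immediate consequence of taking closures of the sets $\{\mathrm{Im}(\delta\tau) \le \mathrm{Im}(\tau)\}$, makes the injectivity property a two-line contradiction, and reduces surjectivity to the existence of a highest point in each orbit, which you correctly extract from the discreteness of $\mathbb Z\tau_0 + \mathbb Z$. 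You also correctly identify the one step that genuinely requires work --- showing that a highest orbit point normalized into the strip, which may sit on a bounding arc or on $\mathrm{Re}(\tau) = -1/2$, is a limit of points of the open set $R_\Gamma$ --- and your explicit perturbation $\tau_1 \mapsto (x+s) + ity$ with $s$ chosen small relative to $t^2 - 1$, uniformly over the finitely many binding bottom rows, settles it. The local-finiteness argument you use both for openness and for isolating the binding conditions is the standard one and is stated with enough quantitative detail ($|c_\delta| \ge 4/y_0$ gives automatic slack) to be checkable. I see no gap.
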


\begin{proof}
See \cite[Theorem 3.3]{Fer} and its proof.
\end{proof}

Thus the fundamental region $\mathfrak F_{\Gamma_0(p)}$ that we try to find lies between $R_{\Gamma_0(p)}$ and $\overline{R_{\Gamma_0(p)}}$.

\begin{lemma}
\[ R_{\Gamma_0(p)} \ = \ \{ \tau \in \mathbb H \, \big| \, |\mathrm{Re}(\tau)| < 1/2, \ 
|\tau - k/p| > 1/p \mbox{ for all } k = \pm 1, \pm 2, \ldots, \pm [p/2] \}. \]
\end{lemma}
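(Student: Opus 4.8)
The plan is to unwind the definition $R_{\Gamma_0(p)} = \bigcap_{\gamma \in \Gamma_0(p) - \{\pm I\}} \mathrm{outside}(\gamma)$ and match it against the set on the right, which I will call $\mathcal R$. The inclusion $R_{\Gamma_0(p)} \subseteq \mathcal R$ is immediate once one picks out the right matrices: $\pm(\begin{smallmatrix} 1 & 1 \\ 0 & 1 \end{smallmatrix}) \in \Gamma_0(p)$ give the half-planes $\{\mathrm{Re}(\tau) < 1/2\}$ and $\{\mathrm{Re}(\tau) > -1/2\}$, and for $1 \le k \le [p/2]$ the coprimality $\gcd(k,p) = 1$ lets me complete $(\begin{smallmatrix} \pm k & * \\ p & *\end{smallmatrix})$ to an element of $\Gamma_0(p) - \{\pm I\}$ whose $\mathrm{outside}$ is $\{|\tau \mp k/p| > 1/p\}$; intersecting these gives exactly $\mathcal R$. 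So the whole problem is the reverse inclusion: I must show that if $\tau \in \mathcal R$ then $\tau \in \mathrm{outside}(\gamma)$ for \emph{every} $\gamma \in \Gamma_0(p) - \{\pm I\}$.

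To do this I would fix $\tau \in \mathcal R$ and argue by contradiction, assuming $\tau \in \mathrm{arc}(\gamma) \cup \mathrm{inside}(\gamma)$ for some $\gamma = (\begin{smallmatrix} a & b \\ c & d\end{smallmatrix}) \in \Gamma_0(p) - \{\pm I\}$. When $c = 0$ the matrix is $\pm(\begin{smallmatrix} 1 & b \\ 0 & 1\end{smallmatrix})$ with $b \ne 0$, and $\mathrm{arc}(\gamma) \cup \mathrm{inside}(\gamma)$ is a closed half-plane $\{\pm\mathrm{Re}(\tau) \ge |b|/2\}$ with $|b|/2 \ge 1/2$, disjoint from the strip $\{|\mathrm{Re}(\tau)| < 1/2\}$; done. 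When $c \ne 0$, replacing $\gamma$ by $-\gamma$ (harmless for $\mathrm{arc}$ and $\mathrm{inside}$) I may write $c = mp$ with $m \ge 1$; then the hypothesis reads $|\tau - a/(mp)| \le 1/(mp)$, where $ad - bc = 1$ forces $\gcd(a, mp) = 1$, so $a \ne 0$, and comparing real parts with $|\mathrm{Re}(\tau)| < 1/2$ gives $|a| < mp/2 + 1$.

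The remaining work splits by $m$. For $m = 1$: then $|a| \le [p/2] + 1$ (as $p$ is odd); if $|a| \le [p/2]$ the membership $\tau \in \mathcal R$ contradicts $|\tau - a/p| \le 1/p$ outright, and if $|a| = [p/2]+1$, say $a = (p+1)/2$, the identity $|\tau - \tfrac{a-1}{p}|^2 - |\tau - \tfrac{a}{p}|^2 = \tfrac1p(2\mathrm{Re}(\tau) - 1) < 0$ shows $|\tau - \tfrac{a-1}{p}| < 1/p$ with $a-1 = [p/2]$, again a contradiction (and symmetrically for $a < 0$). For $m \ge 2$: here $[p/2] \ge 2$ and $[\,|a|/m\,] \le [p/2]$, so setting $k = \mathrm{sgn}(a)\cdot\max\{1, [\,|a|/m\,]\}$ I get $1 \le |k| \le [p/2]$, and from $\gcd(a,m) = 1$ one checks $1 \le |a - mk| \le m-1$; then
\[ |\tau - k/p| \ \le \ |\tau - a/(mp)| \ + \ |a - mk|/(mp) \ \le \ 1/p, \]
with equality forcing $\tau$, $a/(mp)$, $k/p$ collinear and $|\tau - a/(mp)| = 1/(mp)$, hence $\tau = (k\pm1)/p \in \mathbb R$, which is absurd; so $|\tau - k/p| < 1/p$, contradicting $\tau \in \mathcal R$. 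That exhausts the cases and proves $\mathcal R \subseteq R_{\Gamma_0(p)}$.

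I expect the delicate part to be the boundary bookkeeping in the $m \ge 2$ step (and its $m=1$ edge case): one has to keep the index $k$ of the nearby forbidden disk $\{|\tau - k/p| < 1/p\}$ strictly inside the window $1 \le |k| \le [p/2]$ — it must not hit $0$, which is why $\gcd(a, mp) = 1$ and rounding $a/m$ \emph{away} from zero are needed, and it must not hit $[p/2]+1$, which is precisely where the bound $|a| < mp/2 + 1$ (and, in the $m = 1$ case, the strictness of $|\mathrm{Re}(\tau)| < 1/2$) is used — and one has to turn the ``tangential'' disk containments into strict ones, which succeeds only because the single point of tangency, $(k\pm1)/p$, is real and therefore not in $\mathbb H$.
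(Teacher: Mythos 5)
Your proof is correct, and the hard direction is organized rather differently from the paper's. The paper first computes the intersection over $|c_\gamma|\le p$ exactly (quietly absorbing the discs centred at $k/p$ with $|k|>[p/2]$ into the stated set), and then disposes of all $\gamma$ with $|c_\gamma|\ge 2p$ by a height estimate: away from the imaginary axis a point of the candidate region has $\mathrm{Im}(\tau)>\sqrt3/2p$, whereas every point of $\mathrm{inside}(\gamma)$ has imaginary part below $1/|c_\gamma|\le 1/2p$; the remaining case $|\mathrm{Re}(\tau)|\le 1/2p$ is declared ``clear.'' You instead run a single arithmetic argument: for $c=mp$ with $m\ge 2$ you divide $a$ by $m$ (rounding away from zero, and using $\gcd(a,mp)=1$ to get $1\le|a-mk|\le m-1$) and push $\tau$ into the disc at $k/p$ by the triangle inequality, with the tangency case killed because the touching point is real. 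This buys you two things the paper leaves implicit: the edge case $|a|=[p/2]+1$ at $m=1$ (your identity $|\tau-\tfrac{a-1}{p}|^2-|\tau-\tfrac{a}{p}|^2=\tfrac1p(2\mathrm{Re}(\tau)-1)<0$ is exactly the missing justification for the paper's ``Thus we see that'' step), and a genuine proof where the paper writes ``it is clear'' for $|\mathrm{Re}(\tau)|\le 1/2p$. The paper's route is shorter and more geometric once one accepts those two assertions; yours is longer but self-contained, and your closing remarks correctly identify where the delicacy lies (keeping $k$ in the window $1\le|k|\le[p/2]$ and upgrading tangency to strict containment).
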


\begin{proof}
Note that
\[
\bigcap_{\gamma \in \Gamma_0(p) - \{ \pm I \} \atop c_\gamma = 0} \mathrm{outside}(\gamma) \ = \ \{ \tau \in \mathbb H \, \big| \, |\mathrm{Re}(\tau)| < 1/2 \}
\]
and that
\[
\bigcap_{\gamma \in \Gamma_0(p) - \{ \pm I \} \atop |c_\gamma| = p} \mathrm{outside}(\gamma) \ = \ \{ \tau \in \mathbb H \, \big| \, |\tau - k/p| > 1/p \mbox{ for all } k \in \mathbb Z \mbox{ with } (k, p) = 1 \}
\]
because
\[
\gamma \in \Gamma_0(p), \ c_\gamma = \pm p \ \Longrightarrow \ \mathrm{outside}(\gamma) \ = \ \{ \tau \in \mathbb H \, \big| \, |\tau \mp a_\gamma / p| > 1/p \}.
\]
Thus we see that
\[ \bigcap_{\gamma \in \Gamma_0(p) - \{ \pm I \} \atop |c_\gamma| \leq p} \mathrm{outside}(\gamma) \ = \ \{ \tau \in \mathbb H \, \big| \, |\mathrm{Re}(\tau)| < 1/2, \ |\tau - k/p| > 1/p \mbox{ for all } k = \pm 1, \pm 2, \ldots, \pm [p/2] \}. \]
Let $\tau$ be a point contained in the set right above. If $|\mathrm{Re}(\tau)| \leq 1 / 2p$, then it is clear that $\tau \in \mathrm{outside}(\gamma)$ for any $\gamma \in \Gamma_0(p)$ with $|c_\gamma| \geq 2p$. If $|\mathrm{Re}(\tau)| > 1 / 2p$, then we see that $\mathrm{Im}(\tau) > \sqrt{3}/2p$ and so we deduce that $\tau \in \mathrm{outside}(\gamma)$ for any $\gamma \in \Gamma_0(p)$ with $|c_\gamma| \geq 2p$ because every point in $\mathrm{inside}(\gamma)$ has its imaginary part less than $1 / 2p$.
\end{proof}

%

Let $S_p = \{ \pm 1, \pm 2, \ldots, \pm \frac{p - 1}{2} \}$. For any $x \in \mathbb Z$ with $(x, p) = 1$, we denote by $x^{-1} \in S_p$ (respectively, $\langle x \rangle \in S_p$) the unique integer such that $xx^{-1} \equiv 1 \pmod p$ (respectively, $x \equiv \langle x \rangle \pmod p)$. For any $k \in S_p$, we define
\[
\gamma_k \ = \ \begin{pmatrix}
k & \frac{kk^{-1} - 1}{p} \\ p & k^{-1}
\end{pmatrix} \ \in \ \Gamma_0(p).
\]
Then we have shown in the above lemma that
\[ R_{\Gamma_0(p)} \ = \ \{ \tau \in \mathbb H \, \big| \, |\mathrm{Re}(\tau)| < 1/2 \} \bigcap \ \big(
\bigcap_{k \in S_p} \mathrm{outside}(\gamma_k) \big). \]

Notice that for any $k \in S_p$, the matrix $\gamma_k$ maps $\mathrm{arc}(\gamma_{-k^{-1}})$ onto $\mathrm{arc}(\gamma_k)$ taking endpoints to endpoints as follows:
\[ \gamma_{k}\Big(\frac{-k^{-1} \pm 1}{p}\Big) \ = \ \frac{k \mp 1}{p}. \]
Observe also that an elliptic point of order $2$ can occur only at the top points of the arcs and that an elliptic point of order $3$ can occur only at the points where the lines or arcs intersect, namely the points $\frac{k}{p} - \frac{1}{2p} + \frac{\sqrt{3}}{2p}i$ for $k \in S_p - \{ 1 \}$.

From these, we deduce that the top point of $\mathrm{arc}(\gamma_k)$ is an elliptic point of order $2$ if and only if $\mathrm{arc}(\gamma_{-k^{-1}})$ $=$ $\mathrm{arc}(\gamma_k)$ if and only if $k^2 \equiv -1 \pmod p$. So if we set
\[ E_p^{(2)} \ := \ \{ k \in S_p \, | \, k^2 \equiv -1 \pmod p \}, \]
then the points $\frac{k}{p} + \frac{1}{p}i$ with $k \in E_p^{(2)}$ are all the inequivalent elliptic points of order $2$ for $\Gamma_0(p)$. We further observe that for such a $k$, the matrix $\gamma_k$ maps $\mathrm{arc}(\gamma_k)$ onto itself with the direction reversed. Hence, if $\tau \in \partial R_{\Gamma_0(p)} \cap \mathrm{arc}(\gamma_k)$ for some $k \in E_p^{(2)}$, then we need to choose $\tau$ such that $\mathrm{Re}(\tau) \leq \frac{k}{p}$ in order to construct $\mathfrak F_{\Gamma_0(p)}$ satisfying the prescribed property (3). On the other hand, if $\tau \in \partial R_{\Gamma_0(p)} \cap (\mathrm{arc}(\gamma_{-k^{-1}}) \cup \mathrm{arc}(\gamma_k))$ for some $k \in S_p - E_p^{(2)}$, then we need to choose $\tau$ such that $\tau \in \mathrm{arc}(\gamma_{k_{(2)}})$, where
\[ k_{(2)} \ := \ \min \{ k, \, -k^{-1} \}. \]
For example, if $k = 1$, then $k_{(2)} = -1$ and so we need to choose points on $\mathrm{arc}(\gamma_{-1})$ instead of points on $\mathrm{arc}(\gamma_{1})$.

Now it remains to determine which points $\frac{k}{p} - \frac{1}{2p} + \frac{\sqrt{3}}{2p}i$ with $k \in S_p - \{ 1 \}$ are equivalent to each other under $\Gamma_0(p)$. For any $k \in S_p$ we see by a straightforward computation that
\[ \gamma_{k}\Big( \frac{1 - k^{-1}}{p} - \frac{1}{2p} + \frac{\sqrt{3}}{2p}i \Big) \ = \ \frac{k}{p} - \frac{1}{2p} + \frac{\sqrt{3}}{2p}i. \]
So the point $\frac{k}{p} - \frac{1}{2p} + \frac{\sqrt{3}}{2p}i$ is $\Gamma_0(p)$-equivalent to the point $\frac{1 - k^{-1}}{p} - \frac{1}{2p} + \frac{\sqrt{3}}{2p}i$ for any $k \in S_p$. Notice that the map $f: S_p - \{ 1 \} \longrightarrow S_p - \{ 1 \}$ taking $k$ to $\langle 1 - k^{-1} \rangle$ satisfies $f^3 = \mathrm{id}$, from which we infer that for any $k \in S_p - \{ 1 \}$ we have either $f(k) = k$ or ($k \neq f(k)$ and $f(k) \neq f^2(k)$ and $k \neq f^2(k)$). We see that
\begin{eqnarray*}
f(k) = k & \Longleftrightarrow & k + k^{-1} \equiv 1 \pmod p \\
& \Longleftrightarrow & k + k^{-1} = 1 \mbox{ or } 1 - p \\
& \Longleftrightarrow & k + k^{-1} = 1
\end{eqnarray*}
because $|k + k^{-1}| \leq p - 2$ by the assumption $p \geq 5$. Thus if $k^2 - k + 1 \equiv 0 \pmod p$, then $\gamma_k$ fixes the point $\frac{k}{p} - \frac{1}{2p} + \frac{\sqrt{3}}{2p}i$. On the other hand, if $k^2 - k + 1 \not\equiv 0 \pmod p$, then we see that the three distinct points $\frac{k}{p} - \frac{1}{2p} + \frac{\sqrt{3}}{2p}i$, $\frac{\langle 1 - k^{-1} \rangle}{p} - \frac{1}{2p} + \frac{\sqrt{3}}{2p}i$, $\frac{\langle 1 - (1 - k^{-1})^{-1} \rangle}{p} - \frac{1}{2p} + \frac{\sqrt{3}}{2p}i$ are equivalent to each other under $\Gamma_0(p)$. Therefore, if we set
\[ E_p^{(3)} \ := \ \{ k \in S_p \, | \, k^2 - k + 1 \equiv 0 \ (\bmod \ p) \}, \]
then the points $\frac{k}{p} - \frac{1}{2p} + \frac{\sqrt{3}}{2p}i$ with $k \in E_p^{(3)}$ are all the inequivalent elliptic points of order $3$ for $\Gamma_0(p)$.
Moreover, if $\tau$ is one of the three points above for some $k \in S_p - (\{ 1 \} \cup E_p^{(3)})$, then $\tau$ must be chosen as $\tau = \frac{k_{(3)}}{p} - \frac{1}{2p} + \frac{\sqrt{3}}{2p}i$, where
\[ k_{(3)} \ := \ \min \{ k, \, \langle 1 - k^{-1} \rangle, \, \langle 1 - (1 - k^{-1})^{-1} \rangle \}. \]
For example, if $k = -1$, then $k_{(3)} = -\frac{p-1}{2}$ and so the point $\tau = -\frac{1}{2} + \frac{\sqrt{3}}{2p}i$ must be chosen and the points $\tau = \pm\frac{3}{2p} + \frac{\sqrt{3}}{2p}i$ must be discarded.

Combining all of the above discussion, we conclude that $\mathfrak F_{\Gamma_0(p)}$ is the set of all points $\tau \in \mathbb H$ satisfying the following properties:
\begin{enumerate}
\item $|\mathrm{Re}(\tau)| \leq 1/2$

\item $|\tau - k/p| \geq 1/p$ for all $k \in S_p$

\item $|\mathrm{Re}(\tau)| = 1/2 \Longrightarrow \mathrm{Re}(\tau) = -1/2$

\item $|\tau - k/p| = 1/p$ with $k \in E_p^{(2)}$ $\Longrightarrow$ $\mathrm{Re}(\tau) \leq k/p$

\item $|\tau \pm 1/p| = 1/p \Longrightarrow \mathrm{Re}(\tau) \leq 0$

\item $|\tau - k/p| = 1/p$ with $k \in S_p - (\{ \pm 1 \} \cup E_p^{(2)})$ $\Longrightarrow$ $\mathrm{Re}(\tau) \leq (2k_{(2)} + 1)/ 2p$

\item $\tau \neq (2k - 1) / 2p + i\sqrt{3}/2p$ for all $k \in S_p - (\{ 1 \} \cup E_p^{(3)})$ with $k \neq k_{(3)}$.
\end{enumerate}

\begin{remark}
Using the same method as above, we can obtain fundamental regions for $\Gamma_0(N)$ with $N= 2, 3, 4$ as follows:
\begin{eqnarray*}
\mathfrak F_{\Gamma_0(N)} & = & \big\{ \tau \in \mathbb H \, \big| \, |\mathrm{Re}(\tau)| \leq 1/2, \ |\tau - 1/N| \geq 1/N, \ |\tau + 1/N| \geq 1/N, \\
& & \qquad \quad \ \ \big( |\mathrm{Re}(\tau)| = 1/2 \mbox{ or } |\tau - 1/N| = 1/N \mbox{ or } |\tau + 1/N| = 1/N \big) \Longrightarrow \mathrm{Re}(\tau) \leq 0 \big\}.
\end{eqnarray*}
\end{remark}

\vspace{.3cm}

\textit{Acknowledgments.} The author would like to express his sincere thanks to the anonymous referee for his or her careful reading on the manuscript.

%

\bibliographystyle{amsplain}

\begin{thebibliography}{10}

\bibitem{Cho2} B. Cho, \textit{Modular equations for congruence subgroups of genus zero}, Ramanujan J., 51 (2020), 187--204.

\bibitem{Cho3} B. Cho, \textit{Modular equations for congruence subgroups of genus zero (II)}, submitted for publication.

\bibitem{CKP1} B. Cho, J. K. Koo, and Y. K. Park, \textit{Arithmetic of the Ramanujan--G{\"o}llnitz--Gordon continued fraction}, J. Number Theory, 129 (2009), 922--948.

\bibitem{CK} S. Choi and C. H. Kim, \textit{Recursion formulas for modular traces of weak Maass forms of weight zero}, Bull. Lond. Math. Soc., 42 (2010), 639--651.

\bibitem{Coh} H. Cohen, \textit{A Course in Computational Algebraic Number Theory}, Graduate Texts in Mathematics (Volume 138), Springer, 2000.

\bibitem{Cox} D. A. Cox, \textit{Primes of the form $x^2 + ny^2$}, second edition, Wiley, 2013.

\bibitem{Fer} C. R. Ferenbaugh, \textit{The genus-zero problem for $n|h$-type groups}, Duke Math. J., 72 (1993), 31--63.

\bibitem{Lang} S. Lang, \textit{Elliptic functions}, second edition, Graduate Texts in Mathematics 112, Springer-Verlag, New York, 1987.

\bibitem{Mura} Y. Murakami, \textit{Intersection numbers of modular correspondences for genus zero modular curves}, J. Number Theory, 209 (2020), 167--194.

\bibitem{Shi} G. Shimura, \textit{Introduction to the arithmetic theory of automorphic functions}, in: Kan\^{o} Memorial Lectures, No. 1, in: Publications of the Mathematical Society of Japan, vol. 11, Iwanami Shoten Publishers/Princeton University Pres, Tokyo/Princeton, N.J., 1971, xiv+267 pp.

\bibitem{Zagier} D. Zagier, \textit{Traces of singular moduli}, Motives, polylogarithms and Hodge theory, Part I, International Press Lecture Series 3 (International Press, Somerville, MA, 2002), 211--244.

\end{thebibliography}

\end{document}